\newtheorem{theorem}{Theorem}[section]
\newtheorem{example}{Example}[section]
\newtheorem{remark}{Remark}[section]
\newtheorem{definition}{Definition}[section]
\newtheorem{corollary}{Corollary}[section]
\newtheorem{lemma}{Lemma}[section]
\newenvironment{proof}{\textbf{Proof.}}{\qquad $\Box$ \bigskip }
\newcommand{\R}{\mathbb{R}}
\newcommand{\C}{\mathbb{C}}
\newcommand{\N}{\mathbb{N}}
\newcommand{\mc}{\mathcal}
\newcommand{\ii}{\mathrm{i}}
\newcommand{\F}{\mathcal{F}}
\newcommand{\K}{\mathcal{K}}
\newcommand{\im}{\mathrm{i}}
\newcommand{\hK}{\hat{\mc K}}
    \definecolor{helena}{rgb}{.2,.8,.4}
    \definecolor{steve}{rgb}{.8,.2,.2}
    \definecolor{todo}{rgb}{.2,.2,.8}
\newcommand{\st}[1]{%
{\textcolor{steve}{\sf\underline{S:}\ #1}
}}
\newcommand{\he}[1]{%
\textcolor{helena}{\sf\underline{H:}\ #1}
}
\journal{Journal of Mathematical Analysis and Applications}
\begin{document}
\begin{frontmatter}
\title{The Karpelevi\v c Region Revisited}

\author[MT]{Stephen Kirkland\corref{FundingThanks}}
\ead{stephen.kirkland@umanitoba.ca}
\author[UCD]{Thomas Laffey}
\ead{thomas.laffey@ucd.ie}
\author[UCD]{Helena \v Smigoc}
\ead{helena.smigoc@ucd.ie}

\cortext[FundingThanks]{The authors' work was supported by University College Dublin under Grant SF1588. S.K.'s research is supported 
	in part by NSERC Discovery Grant RGPIN--2019--05408.}

\address[MT]{Department of Mathematics, University of Manitoba, Winnipeg, MB, Canada}
\address[UCD]{School of Mathematics and Statistics, University College Dublin, Belfield, Dublin 4, Ireland}

\begin{abstract} We consider the  Karpelevi\v c region $\Theta_n \subset \C$ consisting of all eigenvalues of all stochastic matrices of order $n$. We provide an alternative  characterisation of $\Theta_n$ that sharpens the original description given by Karpelevi\v c. In particular, for each $\theta \in [0, 2\pi),$ we identify the point on the boundary of $\Theta_n$ with argument $\theta.$ We further prove that if $n \in \N$ with $n \ge 2,$ and  $t \in \Theta_n,$ then $t$ is a subdominant eigenvalue of some stochastic matrix of order $n$.   
\end{abstract}

\begin{keyword}
Stochastic matrix; Eigenvalue; Markov chain. \\
\MSC[2010] 15A18, 15B51, 60J10.\\
\end{keyword}
\end{frontmatter}

\section{Introduction}

An entrywise nonnegative matrix $T$ is {\emph{stochastic}} if each of its row sums is equal to $1$. To any stochastic matrix $T$ we may associate a corresponding Markov chain, and  there is a wealth of literature on the eigenvalues of stochastic matrices, in part because those eigenvalues govern the convergence (or lack thereof) of the associated Markov chain. From the Perron--Frobenius theorem, it follows immediately that any eigenvalue of a stochastic matrix sits inside the unit disc, and the problem (posed originally by Kolmogorov \cite{Kolmogorov}) of determining the set 
$$\Theta_n = \{t \in \mathbb{C}| t \mbox{{\rm{ is an eigenvalue of a stochastic matrix of order }}} n \}$$  arises naturally. It is not difficult to show that $\Theta_n$ is star--shaped with respect to the origin, and consequently it suffices to describe the boundary of $\Theta_n,$ which we denote by $\partial\Theta_n,$ in order to completely characterise that  region.

A classic result of Karpelevi\v c \cite{Ka} describes $\Theta_n$, and a simplification of the original statement is given in \cite{MR1077984} and \cite{I}.  We present a result from \cite{I} below, after introducing a couple of relevant definitions. 

\begin{definition}
Given $n \in \mathbb{N}$, the set $$\F_n=\{\sfrac{p}{q}; 0 \leq p < q \leq n, \gcd(p,q)=1\}$$ is called \emph{the set of Farey fractions of order $n$}. 
\end{definition}

\begin{definition}
The pair $(\sfrac{p}{q},\sfrac{r}{s})$ is called \emph{a Farey pair (of order $n$)}, if $\sfrac{p}{q},\sfrac{r}{s} \in \mc F_n$, $\sfrac{p}{q} < \sfrac{r}{s}$, 
and $\sfrac{p}{q}< x <\sfrac{r}{s}$ implies $x \not \in \mc F_n$.  

The Farey fractions $\sfrac{p}{q}$ and $\sfrac{r}{s}$  are called \emph{Farey neighbours}, if one of  $(\sfrac{p}{q},\sfrac{r}{s})$ and $(\sfrac{r}{s}, \sfrac{p}{q})$ is a Farey pair.  
\end{definition}

\begin{theorem}(Karpelevi\v c, \cite{Ka}, \cite{Kaenglish}, \cite{I})\label{thm:Karpelevic}
The region $\Theta_n$ is symmetric with respect to the real axis, is included in the unit disc $\{z \in \C, |z|\leq 1\},$ and intersects the unit circle  $\{z \in \C, |z|=1\}$ at the points $\{e^{ \frac{2 \pi \ii p}{q}}, \sfrac{p}{q} \in \F_n\}$.  The boundary of $\Theta_n$ consists of these points and of curvilinear arcs connecting them in circular order. 

Let the endpoints of an arc be $e^{2 \pi \ii p \over q}$ and $e^{2 \pi \ii r \over s}$ with $q<s$. Each of these arcs is given by the following parametric equation: 
 \begin{equation}\label{eq:Karpelevic Poly}
 t^s (t^q-\beta)^{\lfloor {n \over q} \rfloor}=\alpha^{\lfloor {n \over q} \rfloor}t^{q\lfloor {n \over q}\rfloor}, \, \alpha \in [0,1], \, \beta \equiv 1-\alpha.
 \end{equation}
\end{theorem}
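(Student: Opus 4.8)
The plan is to split the statement into a realizability half (``the claimed region lies in $\Theta_n$'') and a containment half (``$\Theta_n$ lies in the claimed region''), after recording the easy topology. First I would note that the set $\mathcal S_n$ of $n\times n$ stochastic matrices is a compact polytope and that spectra depend continuously on the matrix, so $\Theta_n$ is compact; that $\Theta_n\subset\{|z|\le 1\}$ by Perron--Frobenius (spectral radius $1$); that $\Theta_n$ is symmetric about $\R$ since stochastic matrices are real; and that $\Theta_n$ is star-shaped about $0$, as already noted in the introduction. Hence $\partial\Theta_n$ is determined once one finds, for each $\theta$, the point of $\Theta_n$ of maximal modulus with argument $\theta$, so it suffices to control those extremal points.

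For the realizability half, the unit-circle point $e^{2\pi i p/q}$ comes from the $q\times q$ cyclic permutation matrix, padded to order $n$ by appending $n-d$ rows that are nonnegative, have row sum $1$, and are supported on the first $d$ coordinates; this keeps the matrix stochastic while merely adjoining $n-d$ zero eigenvalues, and the same padding device turns any $d\times d$ stochastic matrix with a prescribed eigenvalue into an order-$n$ one. For each Farey neighbour pair with $q<s$ I would then write down the classical one-parameter family $B(\alpha)$ of stochastic matrices --- whose weighted digraph is a disjoint union of $\lfloor n/q\rfloor$ short cycles linked into a single long cycle, with edge weights $\alpha$ and $\beta=1-\alpha$ --- and verify, by a direct expansion of $\det(tI-B(\alpha))$ over the cycle covers of its digraph, that its characteristic polynomial equals, up to a power of $t$, the polynomial occurring in \eqref{eq:Karpelevic Poly}. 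Padding $B(\alpha)$ to order $n$ and letting $\alpha$ range over $[0,1]$ traces the arc, with $\alpha=0,1$ giving the endpoints $e^{2\pi i r/s}$ and $e^{2\pi i p/q}$; it then remains to check the combinatorics that consecutive Farey neighbours share endpoints, so that these arcs, together with star-shapedness filling the region they bound, exhaust the claimed set.

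The containment half is the crux. Fix $\theta$; by compactness choose $T\in\mathcal S_n$ with an eigenvalue $t^\star=\rho e^{i\theta}$ of maximal modulus, and by restricting to the irreducible diagonal block that supports an eigenvector assume $T$ irreducible. The key step is a structure theorem asserting that such an extremal $T$ may be replaced by one whose weighted digraph is exactly of the ``short-cycles-linked-into-a-long-cycle'' form above. I would prove this by a perturbation argument inside the polytope $\mathcal S_n$: if the digraph of $T$ were richer, there would be an admissible direction $M$ (with $T+\varepsilon M$ stochastic for small $\varepsilon>0$ and the argument of the relevant analytic eigenvalue branch held fixed to first order) along which $|t(T+\varepsilon M)|$ strictly increases, contradicting maximality --- the degenerate cases, $t^\star$ non-simple or the first-order change in modulus vanishing, being handled via a Puiseux expansion of the eigenvalue branch or via a global convexity argument that an extreme point of $\Theta_n$ must arise from a matrix with a single free parameter; alternatively one can reach the same canonical family by Ito's combinatorial reductions on the digraph. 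Once $T$ has this form, $t^\star$ is a root of \eqref{eq:Karpelevic Poly} for the appropriate $q$ and $\alpha$, and an argument-principle / Rouché count that follows the roots as $\alpha$ varies over $[0,1]$ identifies which branch $t^\star$ lies on and forces $p/q$ and $r/s$ to be the Farey neighbours straddling $\theta/(2\pi)$, with $q<s$. I expect the main obstacle to be precisely this structure theorem: ruling out every digraph more complicated than the canonical one and making the perturbation analysis rigorous when the extremal eigenvalue is non-simple or the naive directional derivative degenerates, together with the combinatorial bookkeeping that matches each $\theta$ to the correct Farey pair and confirms the normalization $q<s$.
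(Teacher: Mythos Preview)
The paper does not prove Theorem~\ref{thm:Karpelevic}. It is stated with attribution to Karpelevi\v{c} \cite{Ka,Kaenglish} and Ito \cite{I} and is used as the starting point for the paper's own contributions (Theorems~\ref{thm:gcd200} and \ref{thm:gcd2}, which sharpen it). So there is no proof in this paper against which to compare your proposal.

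That said, your outline is a reasonable high-level sketch of how the classical proofs proceed: the realizability direction via explicit one-parameter families of stochastic matrices whose characteristic polynomials are the Ito polynomials, and the containment direction via a structure theorem for extremal matrices. You correctly identify the structure theorem as the crux. Your proposed perturbation-in-the-polytope argument, however, is significantly vaguer than what the actual proofs require: Karpelevi\v{c}'s original argument is long and delicate, and Ito's simplification still relies on a careful combinatorial reduction of the digraph rather than a generic first-order variational argument. In particular, the step where you ``hold the argument of the relevant analytic eigenvalue branch fixed to first order'' while moving inside $\mathcal S_n$ is not obviously achievable, and the handling of non-simple extremal eigenvalues via Puiseux series would need substantial work. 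If you want a self-contained proof, following Ito's reductions \cite{I} is the more tractable route; but for the purposes of this paper the theorem is simply quoted.
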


At first glance, Theorem \ref{thm:Karpelevic} appears to give a pretty complete description of $\Theta_n$: for Farey neighbours $\sfrac{p}{q}, \sfrac{r}{s}$ with $q<s,$ and an angle $\theta$ between $\sfrac{2 \pi p}{q}$ and $\sfrac{2 \pi r}{s},$ we simply  find an appropriate $\alpha \in [0,1],$ and a root ${t}$ of \eqref{eq:Karpelevic Poly},  so that the ray of complex numbers with argument  $\theta$ intersects $\partial \Theta_n$ at ${t}$. It is natural to wonder how to choose $\alpha$ in terms of $\theta,$ and in Section \ref{rep_sec} we will clarify that point. However, as the following example illustrates, there may be more than one candidate root of \eqref{eq:Karpelevic Poly} to consider.

\begin{example}\label{Theta12} 
	Suppose that $n=12$, and note that $(\frac{3}{10}, \frac{1}{3})$ is a Farey pair of order $12$. According to Theorem \ref{thm:Karpelevic}, each point on the boundary of $\Theta_{12}$ in the sector of complex numbers with arguments in $[\frac{3 \pi}{5}, \frac{2 \pi}{3}]$ is a root of the polynomial $f_\alpha(t)=(t^3 -\beta)^{4} - \alpha^4 t^2$ for some $\alpha \in [0,1]$.  In this example, we illustrate how Theorem \ref{thm:Karpelevic} leaves open the possibility that there may be more than one root of $f_\alpha$ in that sector. 
			
			First note $$f_\alpha(\lambda^2)=(\lambda^6 - \alpha \lambda -\beta)(\lambda^6 +\alpha \lambda -\beta)(\lambda^6 - i\alpha \lambda -\beta)(\lambda^6 + i\alpha \lambda -\beta),$$
			and
			 for each $\alpha \in [0,1]$, there is a root $\lambda_0$ of the polynomial $\lambda^6 - i\alpha \lambda -\beta$ 
			whose argument lies in  $[\frac{13 \pi}{10}, \frac{4\pi}{3}]$, such that $\lambda_0^2$ is a root of $f_\alpha$ that lies  in the sector of complex numbers with arguments in $[\frac{3 \pi}{5}, \frac{2 \pi}{3}].$ (As our results in Section \ref{rep_sec} will show, it turns out that $\lambda_0^2$  is on $\partial \Theta_{12}$.)
			
			However, it is also the case that for each $\theta \in [\frac{6 \pi}{5}, \frac{4 \pi}{3}],$ there is a root $\lambda_1$ of the polynomial  $\lambda^6 - \alpha \lambda -\beta$ such that $\lambda_1^2 $ has argument $2\theta$ and is also a root of $f_\alpha$. So while $f_\alpha$ always furnishes a root that is a boundary point of $\Theta_{12}$ in the sector of interest, there is a range of values of $\alpha$ (roughly the interval $[0, 0.3986]$) for which $f_\alpha$ has two roots in the sector of interest.  
			
			Figure  \ref{fig:test} (\subref{fig:sub1}) illustrates the situation. The black curve depicts the locus of roots associated with $\lambda^6 - i\alpha \lambda -\beta, \alpha \in [0,1],$ while the blue curve shows the locus of roots associated with $\lambda^6 - \alpha \lambda -\beta, \alpha \in [0,1].$ The green, red, and blue circles denote $e^{\frac{2 \pi \ii}{3}}, e^{\frac{3 \pi \ii}{5}} $ and $e^{\frac{2 \pi \ii}{5}},$ respectively. The blue curve exits the sector of complex numbers with arguments in $[\frac{3 \pi}{5}, \frac{2 \pi}{3}] $   when the corresponding value of  $\alpha$ is  approximately $0.3986.$  
\end{example}



Example \ref{Theta12} reveals a further subtlety inherent in the statement of Theorem \ref{thm:Karpelevic}: given Farey neighbours $\sfrac{p}{q},\sfrac{r}{s},$  there may be more than one root of \eqref{eq:Karpelevic Poly} in the corresponding sector in the complex plane, and Theorem \ref{thm:Karpelevic} does not identify which of those roots lies on $\partial \Theta_n$. In Section \ref{rep_sec}, we  resolve that ambiguity by proving the following theorem.

%

\begin{figure}[h]\label{12example_fig}
\centering
\begin{subfigure}{.5\textwidth}
  \centering
  \includegraphics[width=0.9\linewidth]{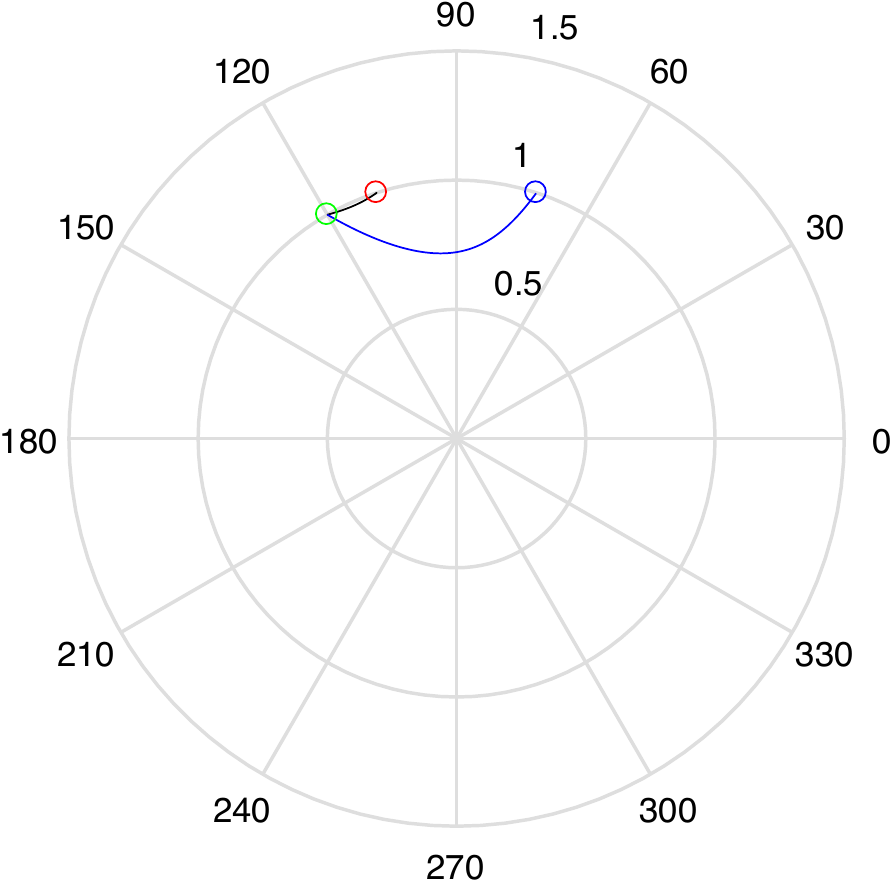}
\caption{Loci of two roots of the polynomial $(t^3 -\alpha)^{4} - \alpha^4 t^2$ with arguments in   $[\frac{3 \pi}{5}, \frac{2 \pi}{3}].$  }
  \label{fig:sub1}
\end{subfigure}%
\begin{subfigure}{.5\textwidth}
  \centering
  \includegraphics[width=0.9\linewidth]{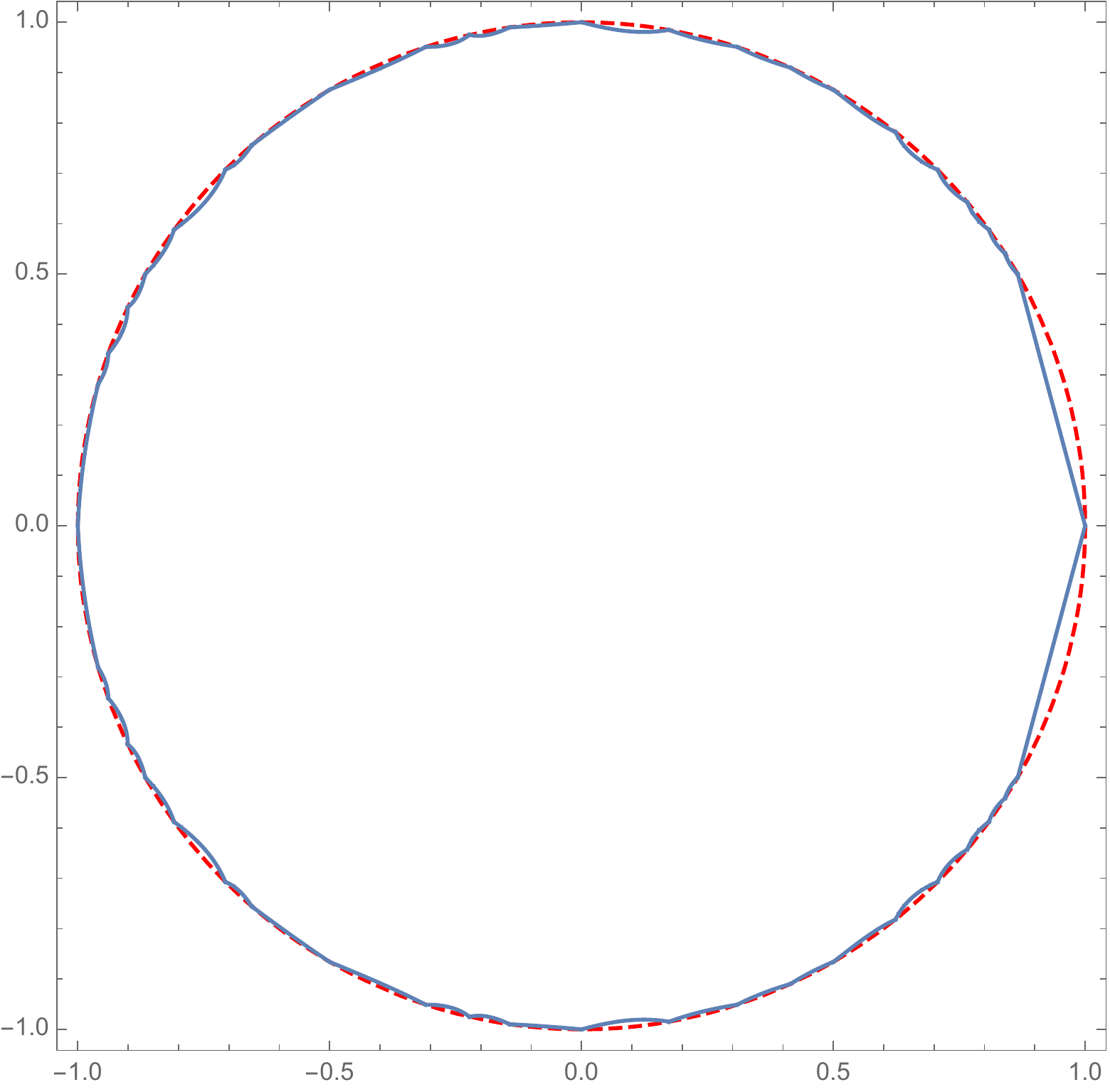}
 \caption{Full region}
  \label{fig:sub2}
\end{subfigure}
\caption{\label{fig:test} $\Theta_{12}$}
\end{figure}


\begin{theorem}\label{thm:gcd200}
Suppose that  $(\frac{p}{q},\frac{r}{s})$ is a Farey pair in $\F_n$, $q\leq s$, $d\equiv \left \lfloor \frac{n}{q}\right \rfloor$ and $\delta\equiv \gcd(d,s)$. Define 
$s_1, d_1, r_1$ and $j_0 \in \{0,\ldots, \delta-1\}$ via the equations $s=s_1 \delta$, $d=d_1\delta$, $r=r_1\delta+j_0$.
Furthermore, we define $\hat r \in \{0,\ldots, s_1-1\}$ and $l_0 \in \{0,\ldots, d_1-1\}$ to be the nonnegative integers that solve the equation $r_1 =d_1  \hat r -l_0  s_1 $. 

For $\theta \in [\sfrac{2 \pi p}{q},\sfrac{2 \pi r}{s}]$ the point on the boundary of $\Theta_n$ with argument $\theta$ is given by $\hat \rho^{d_1} e^{\ii \theta}$ where $\hat \rho$ is the unique positive solution to 
$$\hat \rho^{s_1} \sin(q d_1 \hat \theta)- \hat \rho^{qd_1} \sin \left(s_1 \hat \theta -\frac{2 \pi j_0}{\delta d_1} \right)-\sin\left((qd_1 - s_1)\hat\theta +\frac{2 \pi j_0}{\delta d_1} \right)=0,$$
for $\hat \theta = \frac{1}{d_1}(\theta+2 \pi l_0)$. Furthermore,  $\hat \rho^{d_1} e^{\ii \theta}$ is a root of the rational function $\phi_{\alpha}(t) = (t^q-\beta)^d-\alpha^dt^{qd-s}$,  where $\alpha$ is given by $$\alpha \sin\left((qd_1 - s_1)\hat\theta +\frac{2 \pi j_0}{\delta d_1} \right)=\hat \rho^{s_1} \sin(q d_1 \hat \theta).$$ 
\end{theorem}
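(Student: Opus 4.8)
The starting point is Theorem \ref{thm:Karpelevic}: every boundary point in the sector with arguments in $[\sfrac{2\pi p}{q},\sfrac{2\pi r}{s}]$ is a root of the Karpelevi\v c polynomial $t^s(t^q-\beta)^d = \alpha^d t^{qd}$ for some $\alpha\in[0,1]$, equivalently of the rational function $\phi_\alpha(t)=(t^q-\beta)^d-\alpha^d t^{qd-s}$. First I would fix $\theta$ in this sector, write a prospective boundary point as $t=\rho e^{\ii\theta}$, and substitute into $\phi_\alpha(t)=0$ to obtain a single complex equation relating $\rho$, $\theta$ and $\alpha$. The key algebraic manoeuvre is to exploit the common factor $\delta=\gcd(d,s)$: writing $d=d_1\delta$, $s=s_1\delta$, the substitution $t^q = $ (something)$^{1/\delta}$ type reduction lets one pass to the "reduced" exponents $d_1, s_1$. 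The bookkeeping integers $j_0$ (from $r=r_1\delta+j_0$) and then $\hat r, l_0$ (from $r_1 = d_1\hat r - l_0 s_1$, solvable since $\gcd(d_1,s_1)=1$) are precisely the data needed to track which branch of the $\delta$-th and $d_1$-th roots corresponds to the correct arc — i.e. to pin down the correct argument offsets. I expect the change of variable $\hat\theta=\tfrac{1}{d_1}(\theta+2\pi l_0)$ and $\hat\rho = \rho$ (so $t = \hat\rho^{d_1}e^{\ii\theta}$ matches $\hat\rho^{d_1}$ as the modulus, with $\hat\rho^{d_1\delta}=\hat\rho^{s_1}$ appearing naturally from $\phi_\alpha$ after the reduction) to convert $\phi_\alpha(t)=0$ into a real-and-imaginary-part pair.

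Next I would separate that complex equation into its real and imaginary parts. The imaginary part, after the reductions above, should collapse to a relation of the form $\alpha\sin\big((qd_1-s_1)\hat\theta+\tfrac{2\pi j_0}{\delta d_1}\big)=\hat\rho^{s_1}\sin(qd_1\hat\theta)$ — this is exactly the stated formula for $\alpha$, and it also shows $\alpha\ge 0$ on the relevant $\theta$-range (one must check the two sine factors have the same sign there, which is where the endpoint arguments $\sfrac{2\pi p}{q}$ and $\sfrac{2\pi r}{s}$ enter). Substituting this expression for $\alpha^d$ (or rather $\alpha$) back into the real part, and clearing, should yield the displayed transcendental equation
$$\hat\rho^{s_1}\sin(qd_1\hat\theta)-\hat\rho^{qd_1}\sin\!\Big(s_1\hat\theta-\tfrac{2\pi j_0}{\delta d_1}\Big)-\sin\!\Big((qd_1-s_1)\hat\theta+\tfrac{2\pi j_0}{\delta d_1}\Big)=0.$$
I would then treat the left-hand side as a function $g(\hat\rho)$ of $\hat\rho>0$ for fixed $\hat\theta$, and prove it has a unique positive root: $g(0)<0$ (the constant term is $-\sin(\cdots)<0$ on the range), $g(\hat\rho)\to+\infty$ as $\hat\rho\to\infty$ since $qd_1>s_1$ and the leading coefficient $-\sin(s_1\hat\theta-\cdots)$ has the right sign, and strict monotonicity of $g$ on $(0,\infty)$ — this last point needs a short convexity/derivative argument, checking that $g'(\hat\rho)>0$, again using the sign information on the sine terms over the admissible $\hat\theta$-interval.

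Finally I would verify that the unique $\hat\rho$ so produced does give the boundary point (not merely *a* point of $\Theta_n$): since Theorem \ref{thm:Karpelevic} guarantees $\partial\Theta_n$ meets the ray of argument $\theta$ at a root of some $\phi_\alpha$, and since the construction runs in reverse to recover $\alpha\in[0,1]$ from $(\hat\rho,\hat\theta)$, uniqueness of the positive solution forces $\hat\rho^{d_1}e^{\ii\theta}$ to be that boundary point — I should also confirm $\alpha$ lands in $[0,1]$, i.e. $\hat\rho^{s_1}\sin(qd_1\hat\theta)\le\sin\big((qd_1-s_1)\hat\theta+\tfrac{2\pi j_0}{\delta d_1}\big)$, which follows from $g(\hat\rho)=0$ together with $\hat\rho^{qd_1}\sin(s_1\hat\theta-\cdots)\ge 0$. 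The main obstacle I anticipate is not the analysis (existence/uniqueness of $\hat\rho$ is a routine monotonicity argument once the signs are nailed down) but the combinatorial bookkeeping in the first paragraph: correctly identifying that the branch of the $d_1$-th root selected by the shift $2\pi l_0$ and the residue $j_0$ is exactly the one whose argument stays in the target sector, and confirming that all the sine arguments appearing in the final equations lie in intervals on which the required sign conditions hold. This amounts to a careful tracking of arguments modulo $2\pi$ across the substitution $t\mapsto \hat\rho^{d_1}e^{\ii\theta}$, and getting the integer parameters $j_0, \hat r, l_0$ to play their roles consistently is the delicate part.
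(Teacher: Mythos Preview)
Your derivation of the trigonometric equation and the uniqueness of its positive root $\hat\rho$ is essentially what the paper does in Lemma~\ref{lem:phi} (combined with the sign analysis of Lemma~\ref{lem:sin}), so that part of the plan is sound. The genuine gap is in your final paragraph.

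You argue that since Karpelevi\v c guarantees the boundary point on the ray of argument $\theta$ is \emph{some} root of $\phi_\alpha$, and since your equation has a unique positive solution, the two must coincide. But this does not follow: $\phi_\alpha$ factors as $\prod_{j=0}^{\delta-1} g_{\alpha,j}$, and your equation $F_{\hat\theta,j_0}(\hat\rho)=0$ picks out only the roots coming from the single factor $g_{\alpha,j_0}$ (and, within that, only the $d_1$-th-root branch indexed by $l_0$). A priori the boundary point could be a root of $g_{\alpha,j}$ for some $j\ne j_0$, or correspond to a different $l$, and in that case it would satisfy a \emph{different} trigonometric equation, not yours. Example~\ref{Theta12} in the paper is exactly this phenomenon: two distinct roots of $\phi_\alpha$ sit in the same sector, only one of them on $\partial\Theta_n$. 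Your uniqueness statement is uniqueness \emph{within the chosen branch}, not uniqueness among all roots of $\phi_\alpha$ in the sector, so it cannot by itself identify the boundary point.

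The paper closes this gap with a continuity argument you have not anticipated. First one checks (Lemma~\ref{lem:1}) that at the endpoint $\theta=\sfrac{2\pi r}{s}$ the boundary point $e^{2\pi\ii r/s}$ does lie on the $(j_0,l_0)$ branch. Then one proves separately that $\phi_\alpha$ has no double roots on $\partial\Theta_n$ (Theorem~\ref{thm:double}) and that each boundary point is a root of $\phi_\alpha$ for a \emph{unique} $\alpha$ (Corollary~\ref{cor:unique alpha}, via a geometric lemma about convex combinations). With these in hand, an infimum argument shows the boundary cannot ``jump'' from the $(j_0,l_0)$ branch to another as $\theta$ decreases from $\sfrac{2\pi r}{s}$ toward $\sfrac{2\pi p}{q}$: at any putative jump point two branches would have to meet, forcing either a double root or two values of $\alpha$ for the same boundary point, both of which are excluded. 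This topological step, not the combinatorial bookkeeping, is the heart of the proof, and your plan does not contain it.
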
 

The following example 
 illustrates an application of Theorem \ref{thm:gcd200}. 

\begin{example}\label{7pi/12}{\rm{Consider the complex number $z=0.9e^{\frac{7 \pi \ii}{12}}.$  Evidently $z \notin \Theta_2$ since the latter  is a subset of $\R$,   and certainly $z \in \Theta_{24},$ since $\Theta_{24}$ is star--shaped with respect to the origin, and contains $e^{\frac{7 \pi \ii}{12}}$. What is the smallest value of $n$ for which $z \in \Theta_n$? In this example we show how Theorem \ref{thm:gcd200} can be used to answer that question.  

First we test whether or not $z \in \Theta_5$. Since $\frac{7 \pi }{12} \in [\frac{\pi}{2}, \frac{2 \pi}{3}], $ we need to consider the Farey neighbours $\frac{1}{4}, \frac{1}{3}$ in $\F_5.$ According to Theorem \ref{thm:gcd200}, when $\theta \in [\frac{\pi}{2}, \frac{2 \pi}{3}],$ the corresponding boundary point of $\Theta_5$ is given by $\rho e^{\ii \theta},$ where $\rho$ is the unique positive solution to $\rho^4 \sin(3 \theta)-\rho^3 \sin(4 \theta) + \sin(\theta)=0.$ For the specific choice of $\theta = \frac{7 \pi }{12},$ that equation becomes $\frac{-1}{\sqrt{2}} \rho^4 - \frac{\sqrt{3}}{2}\rho^3 +\frac{\sqrt{6}+\sqrt{2}}{4} = 0.$ Since $\frac{-1}{\sqrt{2}} (0.9)^4 - \frac{\sqrt{3}}{2} (0.9)^3 +\frac{\sqrt{6}+\sqrt{2}}{4} < 0,$ we find that the boundary point of $\Theta_5$ with argument $\frac{7 \pi }{12}$ has modulus less than $0.9,$ and so we conclude that $z \notin \Theta_5.$ 

Next we consider $\Theta_6.$ Again we have the Farey neighbours $\frac{1}{4}, \frac{1}{3}$ in $\F_6,$ and 
when $\theta \in [\frac{\pi}{2}, \frac{2 \pi}{3}],$ the boundary point of $\Theta_6$ with argument $\theta$ is  $\rho e^{\ii \theta},$ where $\rho$ is the positive solution to $\rho^3 \sin(2 \theta)+\rho^2 \sin(3 \theta) + \sin(\theta)=0.$ For $\theta = \frac{7 \pi }{12},$ that equation is equal to $\frac{-1}{2} \rho^3 -\frac{1}{\sqrt{2}}\rho^2 + \frac{\sqrt{6}+\sqrt{2}}{4} =0.$ Observing that $\frac{-1}{2} (0.9)^3 -\frac{1}{\sqrt{2}}(0.9)^2 + \frac{\sqrt{6}+\sqrt{2}}{4} >0,$ we see that the boundary point of $\Theta_6$ with argument $\frac{7 \pi }{12}$ has modulus greater than $0.9$. Hence $z \in \Theta_6.$ Thus we find that $z \in \Theta_n$ if and only if $n\ge 6.$ 
}}
\end{example} 

Recall that for a stochastic matrix $T$, a subdominant eigenvalue is one whose modulus is next largest after that of the Perron eigenvalue $1$; since the eigenvalues of $T$ are, in general, complex numbers, it is possible that a given stochastic matrix may have several subdominant eigenvalues. It is also possible (if $T$ is irreducible and periodic, for example) that a subdominant eigenvalue may have modulus $1$. There is a natural interest in the subdominant eigenvalues of a stochastic matrix $T$, as they  govern the asymptotic rate of convergence (or lack of it) of the associated Markov chain. In Section \ref{sec:sub}, we prove a companion result to  Theorem  \ref{thm:gcd200}, which shows that for each $t \in \Theta_n,$ there is a stochastic matrix of order $n$ having $t$ as a subdominant eigenvalue.

\newpage

\section{Farey Pairs}

In this section we fix the notation associated with Farey pairs, and state a selection of easy or well--known results on Farey pairs. Our first result offers a characterisation of Farey pairs; the proof of the result can be found for example in \cite{MR2445243}. 

\begin{lemma}
Let $\sfrac{p}{q},\sfrac{r}{s} \in \F_n$. Then $(\sfrac{p}{q},\sfrac{r}{s})$ is a Farey pair of order $n$ if and only if $q+s>n$ and $qr-ps=1$.
\end{lemma}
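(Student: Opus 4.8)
The plan is to prove both implications using the mediant/Stern--Brocot structure of Farey sequences. First I would recall the standard fact that for any two consecutive fractions $\sfrac{p}{q} < \sfrac{r}{s}$ in the Farey sequence $\F_N$ (for arbitrary $N$, with consecutiveness meaning no element of $\F_N$ strictly between them), one has $qr - ps = 1$; this is the classical unimodularity property, proved by induction on $N$ via the observation that passing from $\F_{N-1}$ to $\F_N$ inserts new fractions only as mediants $\sfrac{(p+r)}{(q+s)}$ of previously consecutive pairs with $q+s = N$. Applying this with $N = n$ gives one half of the ``only if'' direction immediately: if $(\sfrac{p}{q},\sfrac{r}{s})$ is a Farey pair of order $n$, then $qr - ps = 1$.

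For the remaining part of the ``only if'' direction, suppose $(\sfrac{p}{q},\sfrac{r}{s})$ is a Farey pair of order $n$ but $q + s \le n$. Then I would check that the mediant $\sfrac{(p+r)}{(q+s)}$ lies strictly between $\sfrac{p}{q}$ and $\sfrac{r}{s}$ (a one-line computation using $qr-ps = 1 > 0$) and has denominator $q+s \le n$, hence after reducing to lowest terms it still has denominator at most $n$ and so lies in $\F_n$; this contradicts the assumption that nothing in $\F_n$ lies strictly between the two fractions. Therefore $q + s > n$.

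Conversely, assume $\sfrac{p}{q}, \sfrac{r}{s} \in \F_n$ satisfy $q + s > n$ and $qr - ps = 1$. The identity $qr - ps = 1$ forces $\sfrac{p}{q} < \sfrac{r}{s}$ and, crucially, implies that any fraction $\sfrac{a}{b}$ with $\sfrac{p}{q} < \sfrac{a}{b} < \sfrac{r}{s}$ must have $b \ge q + s$: indeed, writing $\sfrac{a}{b} - \sfrac{p}{q} = \sfrac{(aq - bp)}{bq} \ge \sfrac{1}{bq}$ and $\sfrac{r}{s} - \sfrac{a}{b} = \sfrac{(br - as)}{bs} \ge \sfrac{1}{bs}$ (both numerators being positive integers), adding these and comparing with $\sfrac{r}{s} - \sfrac{p}{q} = \sfrac{1}{qs}$ yields $\sfrac{1}{qs} \ge \sfrac{1}{bq} + \sfrac{1}{bs}$, i.e. $b \ge q + s$. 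Since $q + s > n$, no such $\sfrac{a}{b}$ lies in $\F_n$, so $(\sfrac{p}{q},\sfrac{r}{s})$ is a Farey pair of order $n$.

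The only genuine obstacle is bookkeeping: one must be careful that the mediant in the ``only if'' direction, after cancellation of any common factor, still has denominator $\le n$ (it does, since reduction only decreases the denominator), and one must make sure the chosen $N$ in the unimodularity lemma matches the order $n$ rather than some generic bound. Both points are routine, so the main content is the elementary inequality argument in the converse direction, which is short. I would cite \cite{MR2445243} for the standard Farey-sequence facts rather than reproving the full induction.
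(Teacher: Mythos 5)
Your proof is correct. The paper does not actually prove this lemma --- it simply refers the reader to \cite{MR2445243} --- and your argument (unimodularity of consecutive Farey fractions for $qr-ps=1$, the mediant $\sfrac{(p+r)}{(q+s)}$ to force $q+s>n$, and the denominator estimate $b\ge q+s$ obtained by adding $\sfrac{(aq-bp)}{bq}\ge\sfrac{1}{bq}$ and $\sfrac{(br-as)}{bs}\ge\sfrac{1}{bs}$ for the converse) is exactly the standard proof found in such references, so it matches the intended argument and needs no changes.
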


Since the region $\Theta_n$ is symmetric with respect to the real axis, it is enough to study only one of the two regions (Farey pairs) that are conjugate to each other. The next lemma explains the relationship between the two Farey pairs that correspond to conjugate regions.

\begin{lemma}\label{lem:conjugate pairs}
$(\sfrac{p}{q},\sfrac{r}{s})$ is a Farey pair of order $n$  if and only if $(\sfrac{(s-r)}{s},\sfrac{(q-p)}{q})$ is a Farey pair of order $n$, and those are the only two pairs of order $n$ with denominators $q$ and $s$. Moreover, the sectors 
$\{ z \in \C| \arg(z) \in (\sfrac{2 \pi p}{q},\sfrac{2 \pi r}{s}) \}$ and $\{ z \in \C| \arg(z) \in (\sfrac{2 \pi (s-r)}{s},\sfrac{2 \pi (q-p)}{q}) \} $ 
 corresponding to the two pairs are complex conjugates of each other. 
\end{lemma}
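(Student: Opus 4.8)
The plan is to deduce everything from the characterisation in the previous lemma: an ordered pair $(\sfrac{a}{b},\sfrac{c}{d})$ of elements of $\F_n$ is a Farey pair of order $n$ if and only if $b+d>n$ and $bc-ad=1$. I begin with the routine bookkeeping. Since $\gcd(p,q)=1$ forces $\gcd(q-p,q)=1$, and likewise $\gcd(s-r,s)=\gcd(r,s)=1$, the fractions $\sfrac{(s-r)}{s}$ and $\sfrac{(q-p)}{q}$ again lie in $\F_n$ (the only borderline case, $\sfrac{p}{q}=\sfrac{0}{1}$, being absorbed by the circular identification of the argument $2\pi$ with $0$). Cross-multiplying shows that $\sfrac{(s-r)}{s}<\sfrac{(q-p)}{q}$ is equivalent to $qr-ps>0$, so the conjugate pair is correctly ordered exactly when the original one is.

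For the equivalence I compute the Farey data of the candidate conjugate pair: with the smaller fraction $\sfrac{(s-r)}{s}$ written first, the cross-quantity is $s(q-p)-(s-r)q=rq-sp$, and the sum of denominators is $s+q$. Hence, by the previous lemma, $(\sfrac{(s-r)}{s},\sfrac{(q-p)}{q})$ is a Farey pair of order $n$ if and only if $q+s>n$ and $qr-ps=1$, which is precisely the condition for $(\sfrac{p}{q},\sfrac{r}{s})$ to be a Farey pair of order $n$. This proves the biconditional.

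For the uniqueness claim I first observe that $\gcd(q,s)=1$, since any common divisor of $q$ and $s$ divides $qr-ps=1$. Suppose some Farey pair of order $n$ has denominators $q$ and $s$. If the pair is $(\sfrac{a}{q},\sfrac{b}{s})$ in this order, then $qb-as=1$; subtracting from $qr-ps=1$ gives $q(b-r)=s(a-p)$, and since $\gcd(q,s)=1$ while $|a-p|<q$ and $|b-r|<s$, it follows that $a=p$ and $b=r$. If instead the smaller fraction has denominator $s$, the same argument, now run against the cross-equation $s(q-p)-(s-r)q=1$ of the conjugate pair, forces the pair to be $(\sfrac{(s-r)}{s},\sfrac{(q-p)}{q})$. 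Thus these two exhaust the Farey pairs of order $n$ with denominators $q$ and $s$.

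Finally, since $\sfrac{2\pi(s-r)}{s}=2\pi-\sfrac{2\pi r}{s}$ and $\sfrac{2\pi(q-p)}{q}=2\pi-\sfrac{2\pi p}{q}$, conjugation $z\mapsto\bar z$ carries $\{z\in\C:\arg z\in(\sfrac{2\pi p}{q},\sfrac{2\pi r}{s})\}$ onto $\{w\in\C:\arg w\in(-\sfrac{2\pi r}{s},-\sfrac{2\pi p}{q})\}$, and reading the arguments modulo $2\pi$ identifies this last set with $\{w\in\C:\arg w\in(\sfrac{2\pi(s-r)}{s},\sfrac{2\pi(q-p)}{q})\}$, the sector of the conjugate pair. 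None of this is deep; the points that demand care are the reversal of orientation of the argument interval under conjugation, the coprimality $\gcd(q,s)=1$ underlying the uniqueness step, and the single degenerate endpoint fraction $\sfrac{0}{1}$, for which the circular identification of $0$ with $2\pi$ must be invoked.
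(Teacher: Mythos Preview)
The paper does not actually prove this lemma: it is stated without proof among what the authors call ``a selection of easy or well--known results on Farey pairs'' at the opening of Section~2. Your argument is correct and is exactly the natural one, deriving everything from the characterisation $q+s>n$, $qr-ps=1$ of the preceding lemma; the computation $s(q-p)-(s-r)q=qr-ps$ gives the biconditional, the observation $\gcd(q,s)=1$ together with the bounds on numerators gives uniqueness, and the sector conjugation is immediate.
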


In particular, Lemma \ref{lem:conjugate pairs} allows us to  work with the pair $(\sfrac{p}{q},\sfrac{r}{s})$ that satisfies $q<s$, and we do so throughout the sequel.   Next we introduce a few definitions associated with Farey pairs upon which we will depend throughout the paper. 
%

\begin{definition}
Let $n$, $q$, and $s$ be positive integers such that $(\sfrac{p}{q},\sfrac{r}{s})$ is a Farey pair in $\F_n$ for some $p$ and $r$. Then:
\begin{enumerate}
\item $\mc K_n(\sfrac{p}{q},\sfrac{r}{s})\equiv \partial\Theta_n \cap  \{z \in \C; \arg(z) \in (\sfrac{2 \pi p}{q},\sfrac{2 \pi r}{s})\}$,
\item $\hat{\mc K}_n(q,s)\equiv \K_n(\sfrac{p}{q},\sfrac{r}{s}) \cup \K_n(\sfrac{(q-r)}{s},\sfrac{(q-p)}{q})$,
\item $\arg(q,s)\equiv (\sfrac{2 \pi p}{q},\sfrac{2 \pi r}{s}) \cup (\sfrac{2 \pi  (q-r)}{s}, \sfrac{2 \pi  (q-p)}{q})$.
\end{enumerate}
\end{definition}

\begin{definition}\label{def:deltaetc}
Let $\zeta=(\frac{p}{q}, \frac{r}{s} )$ be a Farey pair in $\F_n$. We set $d(\zeta)\equiv \left \lfloor \frac{n}{q}\right \rfloor$ and $\delta(\zeta)\equiv gcd(d(\zeta),s)$. We define $s_1(\zeta), d_1(\zeta), r_1(\zeta)$ and $j_0(\zeta) \in \{0,\ldots, \delta(\zeta)-1\}$ via the equations 
\begin{align*}
s&=s_1(\zeta) \delta(\zeta), \\ d(\zeta)&=d_1(\zeta) \delta(\zeta), \\ r&=r_1(\zeta)\delta(\zeta)+j_0(\zeta).
\end{align*}
Furthermore, we define $\hat r(\zeta)  \in \{0,\ldots, s_1(\zeta)-1\}$ and $l_0(\zeta) \in \{0,\ldots, d_1(\zeta)-1\}$ to be the nonnegative integers that solve the equation $r_1(\zeta) =d_1(\zeta)  \hat r(\zeta) -l_0(\zeta)  s_1(\zeta) $. 
\end{definition}
Evidently in Definition \ref{def:deltaetc}, it must be the case that 
$r_1=\left\lfloor \frac{r}{\delta} \right \rfloor$ and  $j_0 = r-\delta \left\lfloor \frac{r}{\delta} \right \rfloor$.

As the Farey pair will be invariably known from the context,  throughout the paper  we will suppress the explicit dependence of 
parameters   $d$, $\delta$ $d_1$, $s_1$, $r_1$, $j_0$, $\hat r$ and $l_0$ on $\zeta$.

\begin{lemma}\label{lem:sin}
Let $(\sfrac{p}{q},\sfrac{r}{s})$ be a Farey pair of order $n$ with $2 \leq q<s$, and parameters  $d$, $\delta$ $d_1$, $s_1$, $r_1$, $j_0$, $\hat r$ and $l_0$ as in Definition \ref{def:deltaetc}.  Let $\tau \in [2 \pi (\sfrac{p}{q}), 2 \pi (\sfrac{r}{s})]$ and $\hat \tau \equiv \frac{1}{d_1}(\tau+2 \pi l_0)$. Then the following inequalities hold: 
\begin{enumerate}
\item $\sin(q \tau)=\sin(q d_1 \hat \tau) \geq 0$, with equality if and only if $\tau= \sfrac{2 \pi p}{q}$.
\item $\sin(\frac{1}{d}(s \tau-2 \pi r))=\sin (s_1 \hat \tau -\frac{2 \pi j_0}{\delta d_1} )\leq 0$ with equality only for $\tau= \sfrac{2 \pi r}{s}$. 
\item $\sin(\frac{1}{d}(2 \pi r-s \tau)+q \tau)=\sin((qd_1 - s_1)\hat\tau +\frac{2 \pi j_0}{\delta d_1})>0.$ 
%
%
\end{enumerate}
\end{lemma}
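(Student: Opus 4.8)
The plan is to reduce all three statements to elementary facts about the sine function on intervals of length at most $\pi$, after carefully verifying that the relevant arguments land in such intervals. First I would record the two algebraic identities underlying the substitution $\hat\tau=\frac1{d_1}(\tau+2\pi l_0)$: since $d=d_1\delta$, $s=s_1\delta$ and $r=r_1\delta+j_0$ with $r_1=d_1\hat r-l_0 s_1$, a direct computation gives $q d_1\hat\tau = q(\tau+2\pi l_0)$, so $\sin(qd_1\hat\tau)=\sin(q\tau)$ because $ql_0$ is an integer; similarly $s_1\hat\tau-\frac{2\pi j_0}{\delta d_1} = \frac{1}{d}(s\tau - 2\pi r) + 2\pi\hat r$, using $s_1 l_0 = d_1\hat r - r_1$ and $s r_1 \delta + s j_0 = \dots$; and subtracting yields the third identity $(qd_1-s_1)\hat\tau + \frac{2\pi j_0}{\delta d_1} = q\tau - \frac{1}{d}(s\tau-2\pi r) - 2\pi\hat r$. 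These identities show the two forms appearing in each item are genuinely equal (mod $2\pi$ inside a sine), so it suffices to prove the stated sign for the ``un-hatted'' expressions.

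For item (1): on the interval $\tau\in[\frac{2\pi p}{q},\frac{2\pi r}{s}]$ we have $q\tau\in[2\pi p,\, \frac{2\pi qr}{s}]$, and since $qr-ps=1$ we get $\frac{qr}{s}=p+\frac1s$, so $q\tau\in[2\pi p, 2\pi p+\frac{2\pi}{s}]$, an interval of length $\frac{2\pi}{s}\le\pi$ starting at an integer multiple of $2\pi$. Hence $\sin(q\tau)\ge 0$ with equality exactly at the left endpoint, i.e. at $\tau=\frac{2\pi p}{q}$ (the right endpoint gives $\sin(2\pi/s)>0$ since $s\ge 3$). For item (2): $\frac1d(s\tau-2\pi r)$ ranges over $[\frac1d(\frac{2\pi ps}{q}-2\pi r),\,0]=[-\frac{2\pi}{qd},0]$, again using $qr-ps=1$; this is an interval of length $\frac{2\pi}{qd}\le\pi$ ending at $0$, so the sine is $\le 0$ with equality only at the right endpoint $\tau=\frac{2\pi r}{s}$. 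For item (3): the expression $q\tau - \frac1d(s\tau-2\pi r)$ is, by the two ranges just computed, at least $2\pi p - 0 = 2\pi p$ and at most $(2\pi p + \frac{2\pi}{s}) - (-\frac{2\pi}{qd}) = 2\pi p + \frac{2\pi}{s}+\frac{2\pi}{qd}$; I must then check $\frac1s+\frac1{qd}\le\frac12$, i.e. that this interval has length less than $\pi$ and lies strictly inside $(2\pi p, 2\pi p+\pi)$. Since $q<s$ and $q+s>n\ge q d$ forces... actually the cleanest bound: $s\ge q+1\ge 3$ and $qd=q\lfloor n/q\rfloor\le n < q+s\le 2s$, so $\frac{1}{qd}>\frac1{2s}$ is the wrong direction — instead use $qd\ge q\ge 2$ together with $s\ge 3$? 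That only gives $\frac13+\frac12>\frac12$, which is not enough, so a sharper argument is needed here.

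I expect item (3) to be the main obstacle: proving strict positivity requires showing the argument stays strictly between consecutive multiples of $\pi$, and the naive bound $\frac1s+\frac1{qd}$ is not obviously $\le\frac12$ when $q=2$. The fix will be to treat the endpoints more carefully — at $\tau=\frac{2\pi p}{q}$ the argument equals exactly $2\pi p$ plus the item-(2) contribution $\frac{2\pi}{qd}>0$, and at $\tau=\frac{2\pi r}{s}$ it equals $2\pi p+\frac{2\pi}{s}$ plus zero, both strictly inside $(2\pi p,2\pi p+\pi)$ provided $\frac1{qd}<\frac12$ and $\frac1s<\frac12$, which do hold since $q\ge 2$, $d\ge 1$, $qd\ge q+1$ when $n\ge q+1$... more precisely $d=\lfloor n/q\rfloor\ge 1$ and if $d=1$ then $q\le n<q+s$ is automatic while $qd=q\ge 3$ unless $q=2$; and when $q=2,d=1$ one has $n\in\{2,3\}$ so $s\le n\le 3<q+s$ forces $s=3$ wait $s>q=2$ and $q+s>n$ with $n\ge 3$ gives $s\ge 2$, consistent with $s=3$, then $\frac1{qd}=\frac12$ exactly — a boundary case to handle by noting $2\pi p+\frac{2\pi}{qd}=2\pi p+\pi$ would make $\sin=0$, contradicting the claimed strict inequality, so this degenerate configuration must actually be excluded by $q+s>n$, i.e. $2+3>n$ means $n\le 4$ but $d=\lfloor n/2\rfloor$ so $n\in\{2,3\}$ gives... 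I would resolve this by checking that $qd=q\lfloor n/q\rfloor$ and the Farey condition $q+s>n$ together with $q<s$ force $\frac{1}{qd}+\frac1s<\frac12$ strictly in all cases, treating the finitely many small exceptions by hand. Once the three sign/strictness claims on the un-hatted expressions are established, the identities from the first paragraph transfer them verbatim to the hatted forms, completing the proof.
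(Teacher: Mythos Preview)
Your overall plan is the same as the paper's: first verify that the ``hatted'' and ``un-hatted'' arguments differ by integer multiples of $2\pi$, then prove the sign claims for the un-hatted forms using the Farey identity $qr-ps=1$. Your treatment of items~(1) and~(2) matches the paper exactly.

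For item~(3), however, you take a detour that the paper avoids. Your first attempt bounds $q\tau$ and $-\tfrac{1}{d}(s\tau-2\pi r)$ independently and then adds the bounds, obtaining the interval $[2\pi p,\,2\pi p+\tfrac{2\pi}{s}+\tfrac{2\pi}{qd}]$; as you discover, this is too wide. The observation you are missing, and which the paper uses directly, is that
\[
h(\tau)\;=\;q\tau-\tfrac{1}{d}(s\tau-2\pi r)\;=\;\tfrac{1}{d}\bigl((qd-s)\tau+2\pi r\bigr)
\]
is \emph{linear} in $\tau$, hence monotone on $[\tfrac{2\pi p}{q},\tfrac{2\pi r}{s}]$, so its range is exactly the interval between its two endpoint values. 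You do compute those endpoint values correctly in your second pass---$h(\tfrac{2\pi p}{q})=2\pi p+\tfrac{2\pi}{qd}$ and $h(\tfrac{2\pi r}{s})=2\pi p+\tfrac{2\pi}{s}$---but you never invoke monotonicity, so ``both endpoints lie in $(2\pi p,2\pi p+\pi)$'' does not by itself bound the whole image. Once linearity is stated, the conclusion is immediate: $h(\tau)$ lies between $2\pi p+\tfrac{2\pi}{qd}$ and $2\pi p+\tfrac{2\pi}{s}$, and there is no need to control $\tfrac{1}{s}+\tfrac{1}{qd}$ at all.

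Your worry about the extreme case $q=2$, $d=1$ (forcing $n=3$, $s=3$, and $\tfrac{2\pi}{qd}=\pi$) is legitimate: at the left endpoint $\tau=\tfrac{2\pi p}{q}$ one then gets $\sin(h(\tau))=0$, contradicting the strict inequality as stated. The paper's proof simply writes ``the desired conclusion now follows'' without isolating this case, so this is a minor gap in the paper as well rather than a flaw unique to your argument; in the applications downstream the lemma is only invoked for $\theta$ in the open interval $\arg(q,s)$, where the issue does not arise.
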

\begin{proof}
The following equalities are readily established from the definition of $\hat \tau$ and Definition \ref{def:deltaetc}:
\begin{align*}
q d_1 \hat \tau&=q \tau +2 \pi l_0 q \\
s_1 \hat \tau-\frac{2 \pi j_0}{\delta d_1}&=\frac{1}{d}(\tau s- 2 \pi r+2 \pi d \hat r) \\
(qd_1 - s_1)\hat\tau +\frac{2 \pi j_0}{\delta d_1}&=\frac{1}{d}(2 \pi r-s \tau)+q \tau+ 2 \pi (l_0 q-\hat r). 
\end{align*}
With this we have proved that all the equalities in 1.--3. hold, and it now suffices to prove the inequalities involving $\tau$. 
To accomplish this, we note:
\begin{equation}\label{eq:qr}
 \frac{2 \pi qr}{s}=2 \pi \left( \frac{ps+1}{s}\right) =2 \pi p+ \frac{2 \pi}{s}< 2 \pi p+\pi 
\end{equation}
and 
\begin{equation}\label{eq:sp}
\frac{2 \pi s p}{q}=2 \pi \left(\frac{qr-1}{q}\right) =2 \pi r- \frac{2 \pi}{q}\geq 2 \pi r-\pi,
\end{equation}
with equality in (\ref{eq:sp}) if and only if $q=2$. 
\begin{enumerate}
\item From \eqref{eq:qr} we get:
$$q\tau \in  [2 \pi p,  \sfrac{2 \pi qr}{s}] =[2 \pi p, 2 \pi p + \sfrac{2 \pi }{s}] \subseteq [2\pi p, 2 \pi p+ \pi),$$
and the inequality in 1. follows. 
\item We have: 
\begin{align*}
\sfrac{(2 \pi r-s \tau)}{d} &\in [\sfrac{2 \pi r}{d}-\sfrac{2 \pi r}{d}, \sfrac{2 \pi r}{d}-\sfrac{2 \pi ps}{qd}]\\
&=[0,\sfrac{(2 \pi ps + 2 \pi - 2 \pi ps)}{qd} ]=[0,\sfrac{ 2 \pi}{qd} ],
\end{align*}
and the inequality in 2. follows. 
\item Suppose for concreteness that $qd>s.$ 
Note that $h(\tau)\equiv \frac{1}{d}(2 \pi r-s \tau)+q \tau = \frac{1}{d}\left(2 \pi r + (qd-s)\tau\right),$ and $qd > s,$ imply that $h(\tau)\in [h(\sfrac{2 \pi p}{q}),h(\sfrac{2 \pi r}{s})]$. Now:
\begin{align*}
h(\sfrac{2 \pi p}{q})&=\frac{1}{d}(2 \pi r+2 \pi pd-(\sfrac{2 \pi ps}{q})) \\
&=\frac{1}{d}(2 \pi r+2 \pi pd-2 \pi r+(\sfrac{2 \pi}{q}))=2 \pi p+(\sfrac{2 \pi}{qd})
\end{align*}
by \eqref{eq:sp}, and 
$$h(\sfrac{2 \pi r}{s})=\frac{1}{d}(2 \pi r-2 \pi r-(\sfrac{2 \pi qdr}{s}))=\sfrac{2 \pi q r}{s}=2 \pi p +\sfrac{2 \pi }{s}$$
 by \eqref{eq:qr}. Hence, 
$$h(\tau)\in [2 \pi p+(\sfrac{2 \pi}{qd}),2 \pi p +\sfrac{2 \pi }{s}]= [\sfrac{2 \pi}{qd},\sfrac{2 \pi }{s}].$$ In the case that $qd<s,$ an analogous argument establishes that $h(\tau)\in  [\sfrac{2 \pi }{s},\sfrac{2 \pi}{qd}]$. 
The desired conclusion now follows. 
\end{enumerate}
\end{proof}


\newpage

\section{Types of Reduced Ito Polynomials}


Let $(\sfrac{p}{q},\sfrac{r}{s})$ be a Farey pair satisfying $q<s$, $d\equiv \lfloor \sfrac{n}{q} \rfloor$, and let us write 
$$n=d q+y,\text{ where }y \in \{0,\ldots, q-1\}.$$
Following the terminology introduced in \cite{JP}, we distinguish between four different types of arcs arising in Theorem \ref{thm:Karpelevic} and the associated \emph{reduced Ito polynomials}.\\
 Type 0: \begin{equation}\label{eq:Type 0} f_{\alpha}(t)=(t-\beta)^n-\alpha^n, \, p=0, \, q=r=1, \, s=n, \end{equation} 
 Type I: 
 \begin{equation}\label{eq:Type I} f_{\alpha}(t)=t^s-\beta t^{s-q}-\alpha, \, d=1,\end{equation}
Type II: \begin{equation}\label{eq:Type II}f_{\alpha}(t)=(t^q-\beta)^d-\alpha^d t^{qd-s}, \, d>1\text{ and }q d > s,\end{equation}
Type III: \begin{equation}\label{eq:Type III}f_{\alpha}(t)=t^{s-qd}(t^q-\beta)^d-\alpha^d,\, d>1\text{ and }q d < s. \end{equation}

Note that a reduced Ito polynomial $f_{\alpha}(t)$ depends only on $\alpha, q$, $s$ and $n$. Type 0 arcs and polynomials are easy to understand, as the roots of a Type 0 polynomial are given by $\alpha e^{\frac{2 \pi \ii j}{n}} + 1-\alpha, j=0,\ldots, n-1.$ Consequently, 
we will focus our investigation on Types I, II, and III. 

%
%

\begin{example}\label{ex:q=2}
Let us look at Farey pairs in $\F_n$ that contain $\sfrac{1}{2}$. We write $n=2m$ for $n$ even, and $n=2m-1$ for $n$ odd. In both cases:
$(\sfrac{1}{2}, \sfrac{m}{(2m-1)})$ is a  Farey pair of order $n$, however, depending on the parity of $n$, we get two different types of reduced Ito polynomials. For $n=2m$ the corresponding Ito polynomial is of Type II:
\begin{equation}\label{eq:q=2 even}
f_{\alpha}(t)=(t^2-\beta)^m-\alpha^m t,
\end{equation}
while for $n=2m-1$ the corresponding Ito polynomial is of Type III:
\begin{equation}\label{eq:q=2 odd}
f_{\alpha}(t)=t(t^2-\beta)^{m-1}-\alpha^{m-1}.
\end{equation}
\end{example}

 To unify discussions regarding different types of polynomials we introduce the following definition. 

\begin{definition}
Let $(\sfrac{p}{q},\sfrac{r}{s})$, $q \geq 2$,  be a Farey pair in $\mc F_n$. Then
\begin{equation}\label{eq:ItoFunction}
\phi_{\alpha}(t)=(t^q-\beta)^d-\alpha^d t^{qd-s}.
\end{equation} 
is \emph{ the Ito rational function corresponding to $\hK_n(q,s)$}. 
\end{definition}

Note that $\phi_{\alpha}(t)=t^{\epsilon}f_{\alpha}(t)$, where $\epsilon=\min\{0,qd-s\}$ and $f_{\alpha}$ is the corresponding reduced Ito polynomial. In particular, the nonzero  roots of the polynomial $f_{\alpha}$ coincide with the roots of $\phi_{\alpha}$.

\section{Boundary of the Karpelevi\v c region}\label{rep_sec}

In this section we aim to better understand the Karpelevi\v c region, and refine the description of the region given in Theorem \ref{thm:Karpelevic}. This task is accomplished by examining the behaviour of roots of $\phi_{\alpha}$.

\begin{lemma}\label{lem:multiple}
Let $\phi_{\alpha}$, $\alpha \in (0,1)$, be the Ito rational function corresponding to a Farey pair $(\sfrac{p}{q},\sfrac{r}{s})$, $q \geq 2$.
If $t_0$ is a multiple root of $\phi_{\alpha}$, then $t_0^s$ is a real number, and $t_0^s \neq 1$. 
\end{lemma}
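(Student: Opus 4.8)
The plan is to analyze the derivative $\phi_\alpha'(t)$ and compare it with $\phi_\alpha(t)$ at a putative multiple root $t_0$. Write $\phi_\alpha(t)=(t^q-\beta)^d-\alpha^d t^{qd-s}$. Then
\[
\phi_\alpha'(t)=d(t^q-\beta)^{d-1}qt^{q-1}-\alpha^d(qd-s)t^{qd-s-1}.
\]
A multiple root $t_0$ satisfies both $\phi_\alpha(t_0)=0$ and $\phi_\alpha'(t_0)=0$. Note $t_0\neq 0$ (since $\phi_\alpha(0)=\beta^d\neq 0$ for $\alpha\in(0,1)$, and even the rational function has no zero at $0$ when $qd-s<0$) and $t_0^q\neq\beta$ (otherwise $\alpha^d t_0^{qd-s}=0$, impossible). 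So from $\phi_\alpha(t_0)=0$ we get $(t_0^q-\beta)^d=\alpha^d t_0^{qd-s}$, and multiplying $\phi_\alpha'(t_0)=0$ by $t_0$ gives
\[
qd\,(t_0^q-\beta)^{d-1}t_0^q = \alpha^d(qd-s)t_0^{qd-s}=(qd-s)(t_0^q-\beta)^d.
\]
Dividing by $(t_0^q-\beta)^{d-1}$ (nonzero) yields $qd\,t_0^q=(qd-s)(t_0^q-\beta)$, i.e. $s\,t_0^q = -(qd-s)\beta$, so
\[
t_0^q=\frac{(s-qd)\beta}{s}.
\]
This is a real number (indeed, I should keep track of whether it is positive or negative according to the sign of $s-qd$, but in any case it is real).

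From here the goal is to deduce that $t_0^s$ is real and $\neq 1$. Since $t_0^q$ is real, I would use the relation $(t_0^q-\beta)^d=\alpha^d t_0^{qd-s}$ once more: the left side is real (as $t_0^q$ is real), hence $t_0^{qd-s}$ is real, hence $t_0^{s-qd}$ is real. Combining that $t_0^q$ is real with $t_0^{s-qd}$ real: note $\gcd$ considerations give that $t_0^{\gcd(q,\,s-qd)}=t_0^{\gcd(q,s)}$ is real — but actually I do not even need the gcd; it is cleaner to observe $t_0^s = t_0^{s-qd}\cdot (t_0^q)^d$, and both factors are real, so $t_0^s$ is real. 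That handles the first claim. For the second claim, $t_0^s\neq 1$: suppose $t_0^s=1$. Then $|t_0|=1$, so $|t_0^q|=1$, but $|t_0^q|=\left|\frac{(s-qd)\beta}{s}\right| = \frac{|s-qd|(1-\alpha)}{s}$, and I need to show this cannot equal $1$. Since $0<\alpha<1$ we have $1-\alpha<1$; and $|s-qd|<s$ provided $0<qd<2s$. The bound $qd>0$ is clear; for $qd<2s$ recall $d=\lfloor n/q\rfloor$ so $qd\le n<q+s$ (Farey pair condition $q+s>n$), and $q<s$ gives $q+s<2s$, hence $qd<2s$. Therefore $|t_0^q| = \frac{|s-qd|}{s}(1-\alpha) < 1 \cdot 1 = 1$, contradicting $|t_0^q|=1$. (One should double-check the degenerate possibility $s-qd=0$, i.e. $qd=s$; then $t_0^q=0$, forcing $t_0=0$, already excluded. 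Also if $s-qd=0$ the function is Type I-like; either way no multiple root with $t_0^s=1$ arises.)

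I expect the main obstacle to be bookkeeping rather than conceptual: making sure the algebra identifying $t_0^q=\frac{(s-qd)\beta}{s}$ is valid across all of Types I, II, III simultaneously (the cases $qd-s$ positive, negative, or zero), and correctly justifying each division by a nonzero quantity — in particular confirming $t_0\neq 0$ and $t_0^q\neq\beta$ in every case, and handling $d=1$ (Type I), where $(t_0^q-\beta)^{d-1}=1$ and the computation still goes through. The inequality $qd<2s$ deserves care since it is the crux of showing $t_0^s\neq 1$, and it rests precisely on the Farey-pair inequality $q+s>n\ge qd$ together with $q<s$. Once those routine verifications are in place, the conclusion follows as above.
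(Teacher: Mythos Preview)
Your proof is correct and follows essentially the same approach as the paper: both differentiate $\phi_\alpha$, combine $\phi_\alpha(t_0)=0$ with $\phi_\alpha'(t_0)=0$ to obtain that $t_0^q$ equals the explicit real number $\frac{(s-qd)\beta}{s}$ (the paper writes this equivalently as $t_0^q-\beta=\gamma t_0^q$ with $\gamma=\frac{dq}{dq-s}$), and then deduce that $t_0^s$ is real and not equal to $1$. Your route to $t_0^s\neq 1$ via $|t_0^q|=\frac{|s-qd|\beta}{s}<1$ using the Farey inequality $qd\le n<q+s<2s$ is a slight variation on the paper's direct computation $t_0^s=(\alpha/\gamma)^d$, but the underlying idea is the same.
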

\begin{proof} 
	Evidently the conclusion holds if $t_0=0,$ so we assume henceforth that $t_0 \ne 0.$ 
If $t_0$ is a multiple root of the Ito rational function \eqref{eq:ItoFunction}, then 
\begin{align*}
&(t_0^q-\beta)^d-\alpha^d t_0^{qd-s}=0, \text{ and }\\
&dq (t_0^q-\beta)^{d-1}t_0^{q-1}=\alpha^d (dq-s) t_0^{qd-s-1}. 
\end{align*}
From the second equation we get 
$$\alpha^d  t_0^{qd-s-1}=\frac{dq}{dq-s} (t_0^q-\beta)^{d-1}t_0^{q-1},$$
and inserting this in the first equation gives us: 
$(t_0^q-\beta)=\gamma t_0^q$, for $\gamma =\frac{dq}{dq-s} \in \R$. Inserting this back in the first equation, we get: 
$\gamma^d t_0^{dq}=\alpha^d t_0^{dq-s},$ or equivalently $t_0^s=(\frac{\alpha}{\gamma})^d \in \R$. Since $\alpha < 1 < \gamma,$ we see that $t_0^s<1.$ 
\end{proof}

\begin{theorem}\label{thm:double}
An Ito rational function $\phi_{\alpha}$, $\alpha \in (0,1)$, corresponding to $\hK_n(q,s)$ does not have any double roots inside the set $\hK_n(q,s)$.  
\end{theorem}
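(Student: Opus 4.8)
The plan is to combine Lemma~\ref{lem:multiple} with the geometry of the set $\hK_n(q,s)$: Lemma~\ref{lem:multiple} tells us that any multiple root $t_0$ of $\phi_\alpha$ satisfies $t_0^s \in \R$ and $t_0^s < 1$, so it suffices to show that no point $t_0$ of $\hK_n(q,s)$ can have $t_0^s$ real. By the definition of $\hK_n(q,s)$, a point $t_0$ in this set has argument lying in $\arg(q,s) = (\sfrac{2\pi p}{q},\sfrac{2\pi r}{s}) \cup (\sfrac{2\pi(q-r)}{s},\sfrac{2\pi(q-p)}{q})$; working (by the conjugation symmetry of Lemma~\ref{lem:conjugate pairs}) with the pair $(\sfrac{p}{q},\sfrac{r}{s})$, we have $\arg(t_0) = \theta \in (\sfrac{2\pi p}{q},\sfrac{2\pi r}{s})$, and then $\arg(t_0^s) = s\theta \pmod{2\pi}$. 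So I must show that $s\theta$ is never an integer multiple of $\pi$ when $\theta$ ranges over the open interval $(\sfrac{2\pi p}{q},\sfrac{2\pi r}{s})$; equivalently, that the open interval $(\sfrac{2\pi ps}{q},2\pi r)$ contains no multiple of $\pi$.

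The key computation is the one already appearing in the proof of Lemma~\ref{lem:sin}: from $qr - ps = 1$ we get $\sfrac{2\pi ps}{q} = 2\pi r - \sfrac{2\pi}{q}$, so the interval in question is $(2\pi r - \sfrac{2\pi}{q},\, 2\pi r)$, an interval of length $\sfrac{2\pi}{q} \le \pi$ whose right endpoint is the even multiple of $\pi$ equal to $2\pi r$. Since $q \ge 2$, this open interval has length at most $\pi$ and abuts $2\pi r$ from the left without containing it; the only multiple of $\pi$ that could possibly lie inside is $2\pi r - \pi = (2r-1)\pi$, and that happens only if $\sfrac{2\pi}{q} > \pi$, i.e. $q < 2$, which is excluded. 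Hence no multiple of $\pi$ lies in the open interval, so $s\theta$ is never a multiple of $\pi$, $t_0^s$ is never real for $t_0 \in \hK_n(q,s)$ with argument in $(\sfrac{2\pi p}{q},\sfrac{2\pi r}{s})$, and by conjugate symmetry the same holds on the other component of $\arg(q,s)$.

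Putting the pieces together: if $\phi_\alpha$ had a double root $t_0 \in \hK_n(q,s)$, then on the one hand $t_0 \ne 0$ (the origin is not in the boundary arc set $\hK_n(q,s)$, since every point there lies on the unit-circle-adjacent boundary, or at least one argues it is nonzero directly), and on the other hand Lemma~\ref{lem:multiple} forces $t_0^s \in \R$, contradicting the previous paragraph. I expect the only real friction to be bookkeeping: making sure the argument genuinely lies in the \emph{open} interval (so that the endpoints $\sfrac{2\pi p}{q}$ and $\sfrac{2\pi r}{s}$, where $t_0^s$ could be real, are correctly excluded because $\hK_n(q,s)$ is defined with open arguments), and handling the second component $(\sfrac{2\pi(q-r)}{s},\sfrac{2\pi(q-p)}{q})$ cleanly — but that follows immediately from Lemma~\ref{lem:conjugate pairs} since it is the complex conjugate sector and $\phi_\alpha$ has real coefficients, so its double roots come in conjugate pairs. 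The case $q=2$ is the boundary case where the interval has length exactly $\pi$, and it is worth noting explicitly that even then the interval is open and its endpoints are $2\pi r - \pi$ and $2\pi r$, neither interior, so the conclusion still holds.
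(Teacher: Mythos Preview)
Your proposal is correct and is essentially the same argument as the paper's proof. Both reduce, via Lemma~\ref{lem:multiple}, to showing that no $t_0$ with $\arg(t_0)\in(\sfrac{2\pi p}{q},\sfrac{2\pi r}{s})$ can satisfy $t_0^s\in\R$, and both use the Farey relation $qr-ps=1$ to see that $s\theta$ ranges over the open interval $(2\pi r-\sfrac{2\pi}{q},\,2\pi r)$ of length at most $\pi$, which therefore contains no integer multiple of $\pi$; the paper phrases this as the impossibility of $qk=2qr-1$ for an integer $k$ with $q\ge 2$, which is the same observation.
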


\begin{proof}
The statement is obvious for Type 0 polynomials. Let us assume that $t_0 \in \hK_n(q,s)$ for $q \geq 2$, and that $t_0$ is a multiple root of $\phi_\alpha $. By Lemma \ref{lem:multiple} we have $t_0^s\in \R$, and hence $\arg(t_0)=\frac{\pi k}{s}$ for some positive integer $k$. If $\sfrac{2p}{q}< \sfrac{k}{s} < \frac{2r}{s},$ then $2ps<qk<2qr$, and substituting $ps=qr-1$, we get $2qr-2<qk<2qr$. This implies $qk=2qr-1$, which is clearly impossible. 
%
%
\end{proof}

The corollary below confirms Conjecture 6.2 of \cite{JP}. 

\begin{corollary}\label{cor:diff}
Consider $t(\alpha)\in \partial \Theta_n$ as a root of the corresponding reduced Ito polynomial, $\alpha \in [0,1].$ Then $t(\alpha)$ is differentiable in $\alpha$ for $\alpha \in (0,1)$, and continuous for $\alpha \in [0,1].$
\end{corollary}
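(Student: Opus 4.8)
The plan is to deduce Corollary~\ref{cor:diff} from Theorem~\ref{thm:double} by a standard implicit function theorem argument applied to the reduced Ito polynomial $f_\alpha$, with a separate continuity argument at the endpoints $\alpha=0$ and $\alpha=1$. First I would set up the picture: for a fixed Farey pair $(\sfrac{p}{q},\sfrac{r}{s})$ with $q<s$ and the corresponding arc $\K_n(\sfrac{p}{q},\sfrac{r}{s})$, Theorem~\ref{thm:Karpelevic} tells us the arc is traced out by roots of $f_\alpha$ as $\alpha$ ranges over $[0,1]$, with the endpoints $e^{2\pi\ii p/q}$ (at $\alpha=0$, so $\beta=1$) and $e^{2\pi\ii r/s}$ (at $\alpha=1$, so $\beta=0$). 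The content of Theorem~\ref{thm:gcd200} (or its proof) should be taken as establishing that for each $\alpha\in[0,1]$ there is a well-defined point $t(\alpha)$ of the arc that is a root of $f_\alpha$, so that $t:[0,1]\to\partial\Theta_n$ is the parametrisation to be studied.

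The key step is differentiability on $(0,1)$. For $\alpha\in(0,1)$, the point $t(\alpha)$ lies in the open arc $\K_n(\sfrac{p}{q},\sfrac{r}{s})\subseteq\hK_n(q,s)$, and by Theorem~\ref{thm:double} it is a \emph{simple} root of $\phi_\alpha$, hence (since $t(\alpha)\ne 0$ on the open arc, as the arc avoids the origin) a simple root of $f_\alpha$. Write $F(t,\alpha)=f_\alpha(t)$, a polynomial in $t$ whose coefficients are polynomials in $\alpha$ (via $\beta=1-\alpha$); it is jointly real-analytic, and $\partial F/\partial t\,(t(\alpha),\alpha)\ne 0$ because the root is simple. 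The holomorphic implicit function theorem then gives a real-analytic — in particular differentiable — function $\alpha\mapsto t(\alpha)$ locally around each $\alpha_0\in(0,1)$ solving $F(t,\alpha)=0$; uniqueness of the boundary point with a given argument (Theorem~\ref{thm:gcd200}) identifies this local solution with our $t(\alpha)$, so differentiability holds throughout $(0,1)$.

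For continuity on the closed interval $[0,1]$ I would argue by a compactness/limit argument at the two endpoints. As $\alpha\to 0^+$, every accumulation point of $\{t(\alpha)\}$ is a root of $f_0(t)=t^s-t^{s-q}$ lying in the closed arc $\overline{\K_n}$, and the only such root on the closed arc is the endpoint $e^{2\pi\ii p/q}$ (the other roots of $f_0$ are $0$ and the $q$-th roots of unity, and the arc meets the unit circle only at its two endpoints, whose arguments are $2\pi p/q$ and $2\pi r/s$); hence $t(\alpha)\to e^{2\pi\ii p/q}=t(0)$. Symmetrically, as $\alpha\to 1^-$, $f_1(t)=(t^q-0)^d-t^{qd-s}=t^{qd-s}(t^s-1)$ (for Type II; analogously for the other types), whose roots on the closed arc are only $e^{2\pi\ii r/s}$, giving $t(\alpha)\to t(1)$. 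To make the accumulation-point argument rigorous one uses that the roots of $f_\alpha$ depend continuously on $\alpha$ as an unordered set, and that $t(\alpha)$ stays in the compact closed arc.

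The main obstacle I anticipate is the bookkeeping needed to pin down that the single curve $\alpha\mapsto t(\alpha)$ really is globally well-defined and single-valued on all of $[0,1]$ — i.e. that as $\alpha$ varies we are following \emph{one} branch of roots of $f_\alpha$ and not accidentally jumping between branches, and that this branch is exactly the one carving out $\partial\Theta_n$. This is where Theorem~\ref{thm:gcd200} does the real work: it supplies, for each argument $\theta$ in the sector, a \emph{unique} boundary point and an explicit $\alpha=\alpha(\theta)$, and one should check that $\theta\mapsto\alpha(\theta)$ is a monotonic bijection of $[\sfrac{2\pi p}{q},\sfrac{2\pi r}{s}]$ onto $[0,1]$, so that inverting it and composing gives a genuine function $t(\alpha)$ to which the implicit function theorem applies. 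Once that global consistency is in hand, the local differentiability and the endpoint continuity are routine, and the Type~I/II/III case distinctions only affect the harmless endpoint computations with $f_0$ and $f_1$.
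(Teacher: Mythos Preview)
Your proposal is correct and follows essentially the same line as the paper: differentiability on $(0,1)$ via the Implicit Function Theorem, enabled by the simplicity of the boundary root established in Theorem~\ref{thm:double}, and continuity on $[0,1]$ via continuous dependence of roots on coefficients. The paper's proof is in fact a two-sentence sketch invoking exactly these two ingredients; your endpoint analysis with $f_0$ and $f_1$ is more explicit than what the paper writes but amounts to the same thing.

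One remark: your invocation of Theorem~\ref{thm:gcd200} to secure global single-valuedness of $t(\alpha)$ would be circular in the paper's logical order, since Corollary~\ref{cor:diff} precedes the proof of Theorem~\ref{thm:gcd200}. The paper sidesteps this by taking the existence of the boundary parametrisation $t(\alpha)$ as given by Karpelevi\v{c}'s theorem (Theorem~\ref{thm:Karpelevic}) itself, and then arguing only local smoothness; the corollary is really a statement about any continuous selection of boundary roots, and the uniqueness/monotonicity questions you raise are deferred to the later results.
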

\begin{proof}
	The continuity on $[0,1]$ follows from the fact that the coefficients of the corresponding reduced Ito polynomial depend continuous on $\alpha,$ while the differentiability on $(0,1)$ is a consequence of the Implicit Function Theorem. 
\end{proof}

Let $\phi_{\alpha}$, $\alpha \in (0,1)$, be the Ito rational function corresponding to a Farey pair $(\sfrac{p}{q},\sfrac{r}{s})$,  $q \geq 2$. We fix the following notation for the rest of this section: $d\equiv \left \lfloor \frac{n}{q}\right \rfloor$, $\delta\equiv \gcd(d,s)$, $d=d_1\delta$, $s=s_1 \delta$. To better understand the roots of $\phi_{\alpha}$ we consider the following factorisation:
\begin{equation}\label{eq:factorphi}
\phi_{\alpha}(t)=\prod_{j=0}^{\delta-1}g_{\alpha,j}(t),
\end{equation}
where 
\begin{equation}\label{eq:g}
g_{\alpha,j}(t)\equiv (t^q-\beta)^{d_1}-\alpha^{d_1}t^{qd_1-s_1}e^{\frac{2 \pi \ii j}{\delta}}.
\end{equation}
 When $\delta=1$ we have $j=0$ and this reduces to $g_{\alpha,0}(t)\equiv \phi_{\alpha}(t)$. 
It turns out that the roots of $g_{\alpha,j}$ are closely connected with the roots of 
\begin{equation}\label{eq:hg}
\hat g_{\alpha,j}(t)\equiv t^{qd_1}-\beta - \alpha t^{qd_1-s_1}e^{\frac{2 \pi \ii j}{\delta d_1}},
\end{equation}
as it is shown in the following lemma. The functions $g_{\alpha,j}$ and $\hat g_{\alpha,j}$  play a crucial role in the rest of the section, and will always be associated with a particular Farey pair that will be clear from the context. Furthermore, the index $j$ in $g_{\alpha,j}$ and $\hat g_{\alpha,j}$ will be understood modulo $\delta$. 

\begin{lemma}\label{lem:powers}
Let $(\frac{p}{q}, \frac{r}{s})$, $q \geq 2$, be a Farey pair in $\F_n$, and $g_{\alpha,j}$, $\hat g_{\alpha,j}$ as defined in  \eqref{eq:g}  and \eqref{eq:hg}. 
 Let $\hat t_a$, $a=1,\ldots, q d_1$, be the roots of $\hat g_{\alpha,j}(t)$. Then $\hat t_a^{d_1}$, $a=1,\ldots, q d_1$, are the roots of $g_{\alpha,j}(t)$.
\end{lemma}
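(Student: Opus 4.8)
The plan is to reduce everything to a single identity of rational functions relating $g_{\alpha,j}$ and $\hat g_{\alpha,j}$, namely
$$g_{\alpha,j}(t^{d_1})=\prod_{\zeta^{d_1}=1}\hat g_{\alpha,j}(\zeta t).$$
To prove this identity I would first record the arithmetic fact $\gcd(d_1,s_1)=1$, which is immediate from $\delta=\gcd(d,s)$ together with $d=d_1\delta$ and $s=s_1\delta$. Using $\zeta^{d_1}=1$ (so also $\zeta^{qd_1}=1$) one has $\hat g_{\alpha,j}(\zeta t)=t^{qd_1}-\beta-\alpha\,\zeta^{-s_1}e^{2\pi\ii j/(\delta d_1)}\,t^{qd_1-s_1}$; since $\gcd(s_1,d_1)=1$, the map $\zeta\mapsto\zeta^{-s_1}$ permutes the $d_1$-th roots of unity, so the product over $\zeta$ equals $\prod_{\eta^{d_1}=1}\bigl(X-\eta\,c\bigr)=X^{d_1}-c^{d_1}$ with $X=t^{qd_1}-\beta$ and $c=\alpha\,e^{2\pi\ii j/(\delta d_1)}\,t^{qd_1-s_1}$. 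As $c^{d_1}=\alpha^{d_1}e^{2\pi\ii j/\delta}t^{d_1(qd_1-s_1)}$, this is exactly $g_{\alpha,j}(t^{d_1})$.

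From the identity the lemma follows by comparing zeros with multiplicity. The zeros of the right-hand side are the numbers $\zeta^{-1}\hat t_a$, $\zeta^{d_1}=1$, $a=1,\dots,qd_1$; the zeros of the left-hand side are the $d_1$-th roots of the zeros of $g_{\alpha,j}$, each zero $w$ of $g_{\alpha,j}$ contributing its $d_1$ distinct $d_1$-th roots with the same multiplicity (and $w=0$, should it occur, contributing $0$ with $d_1$ times that multiplicity). Applying the map $x\mapsto x^{d_1}$ to both multisets carries the left-hand family onto the multiset of zeros of $g_{\alpha,j}$ taken $d_1$ times, and the right-hand family onto $\{\hat t_a^{d_1}:a=1,\dots,qd_1\}$ taken $d_1$ times; cancelling the factor $d_1$ gives that $\{\hat t_a^{d_1}\}$ is precisely the multiset of zeros of $g_{\alpha,j}$, which is the assertion. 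As an independent check on the algebra one can verify directly that $g_{\alpha,j}(\hat t^{d_1})=0$ whenever $\hat g_{\alpha,j}(\hat t)=0$, by substituting $\hat t^{qd_1}-\beta=\alpha\,e^{2\pi\ii j/(\delta d_1)}\hat t^{qd_1-s_1}$ into $g_{\alpha,j}(\hat t^{d_1})=(\hat t^{qd_1}-\beta)^{d_1}-\alpha^{d_1}e^{2\pi\ii j/\delta}\hat t^{d_1(qd_1-s_1)}$, and one can show $\hat t\mapsto\hat t^{d_1}$ is injective on the roots of $\hat g_{\alpha,j}$, again using $\gcd(s_1,d_1)=1$.

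The step I expect to carry the weight is the multiplicity bookkeeping just described, i.e.\ passing from ``each $\hat t_a^{d_1}$ is a root of $g_{\alpha,j}$'' to ``the $\hat t_a^{d_1}$ exhaust the roots of $g_{\alpha,j}$ with the correct multiplicities''; this is what the identity buys us. A secondary nuisance is that when $qd<s$ the expressions $g_{\alpha,j}$ and $\hat g_{\alpha,j}$ are rational functions with a pole at the origin rather than genuine polynomials, but this is handled by reading the displayed identity in $\C(t)$, or equivalently by clearing the common factor $t^{\max\{0,\,s_1-qd_1\}}$ from each side and comparing the resulting numerator polynomials (monic of equal degree, hence with determined root multisets). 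Apart from these points the argument is routine manipulation of roots of unity, with $\gcd(d_1,s_1)=1$ the one substantive ingredient.
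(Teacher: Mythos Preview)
Your proof is correct and follows essentially the same route as the paper's own proof. Both arguments establish the identity $g_{\alpha,j}(t^{d_1})=\prod_{k=0}^{d_1-1}\hat g_{\alpha,j}\bigl(t\,e^{2\pi\ii k/d_1}\bigr)$ using $\gcd(d_1,s_1)=1$ to see that $\zeta\mapsto\zeta^{-s_1}$ permutes the $d_1$-th roots of unity, and then read off the equality of root multisets from the two factorisations of $g_{\alpha,j}(t^{d_1})$; your packaging via $\prod_{\eta^{d_1}=1}(X-\eta c)=X^{d_1}-c^{d_1}$ and your explicit treatment of the $qd<s$ case are cosmetic improvements over the paper's write-up, but the substance is the same.
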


\begin{proof}
	Denote the roots of $g_{\alpha,j}(t)$ by $t_a, a=1, \ldots, q d_1,$ so that
	$$g_{\alpha,j}(t)=t^{\epsilon}\Pi_{a=1}^{q d_1}(t-t_a)$$
 for $\epsilon=\min\{0,q d_1-s_1\}$, and 
	we have  $g_{\alpha,j}(t^{d_1})=t^{d_1\epsilon}\Pi_{a=1}^{q d_1}(t^{d_1}-t_a)$. On the other hand: 
	\begin{eqnarray*}
		g_{\alpha,j}(t^{d_1}) &=& (t^{q d_1}-\beta)^{d_1 }-\alpha^{d_1 }t^{q d_1^2 -s_1 d_1}e^{\frac{2 \pi i j}{\delta}} \\
		&=& \Pi_{m=0}^{d_1 -1}\left(t^{q d_1}-\beta - \alpha t^{q d_1-s_1}e^{\frac{2 \pi \ii}{ d_1}(\frac{j}{\delta}+m)} \right).
	\end{eqnarray*}
Similarly, 
$$\hat g_{\alpha,j}(t)=t^{\epsilon}\Pi_{b=1}^{q d_1}(t-\hat t_b),$$
and 
$$\hat g_{\alpha,j}(t e^{\frac{2\pi \ii \gamma}{d_1}})=t^{qd_1}-\beta - \alpha t^{qd_1-s_1}e^{\frac{2 \pi \ii }{d_1}(\frac{j}{\delta}-s_1 \gamma)}.$$

	Since $s_1$ and $d_1$ are relatively prime, for each $m=0, \ldots, d_1-1$ there is a $\gamma_m \in \{0, \ldots, d_1-1\}$ such that $-s_1\gamma_m \equiv m \mod d_1$; further, $\{\gamma_0, \ldots, \gamma_{d_1-1}\} =  \{0, \ldots, d_1-1\}.$ 
	Consequently, 
	\begin{eqnarray*}
		g_{\alpha,j}(t^{d_1})& =& \Pi_{m=0}^{d_1-1}\hat g_{\alpha,j}(t e^{\frac{2 \pi \ii \gamma_m}{d_1}}) \\	
		&=&  \Pi_{k=0}^{d_1-1}\hat g_{\alpha,j}(t e^{\frac{2 \pi \ii k}{d_1}}) \\
		&=&\Pi_{k=0}^{d_1-1} (t e^{\frac{2 \pi \ii k}{d_1}})^{\epsilon}\Pi_{b=1}^{q d_1}\left( t e^{\frac{2 \pi \ii k}{d_1}} - \hat t_b\right) \\ 
		&=& t^{d_1 \epsilon}\Pi_{b=1}^{q d_1}\Pi_{k=0}^{d_1-1} \left( t e^{\frac{2 \pi \ii k}{d_1}} - \hat t_b\right)  \\
		&=&t^{d_1 \epsilon} \Pi_{b=1}^{q d_1}\left( t^{d_1} - \hat t_b^{d_1} \right). 
	\end{eqnarray*}
	The conclusion now follows. 
\end{proof}

At this point let us examine the roots  of $g_{\alpha,j}$ and $\hat g_{\alpha,j}$ on $\delta \Theta_n$ for $\alpha=0$ and $\alpha=1$. For $\alpha=0$, $g_{0,j}$ are the same for all $j$, and they all have $e^{\sfrac{2 \pi i p}{q}}$ as a root of multiplicity $d_1$. More interesting is the case, when $\alpha=1$. In the lemma below, the notation in Definition \ref{def:deltaetc} is assumed. 

\begin{lemma}\label{lem:1}
Let $(\frac{p}{q}, \frac{r}{s})$, $2 \leq q < s$, be a Farey pair in $\F_n$, $g_{\alpha,j}$, $\hat g_{\alpha,j}$ as defined in  \eqref{eq:g}  and \eqref{eq:hg}, and $d_1$, $j_0$ and $l_0$ as defined in Definition \ref{def:deltaetc}.   Then $e^{\sfrac{2 \pi i r}{s}}$ is a root of $g_{1,j}$ only for $ j=j_0$, and  
the root $\lambda$ of $\hat g_{1,j_0}$ satisfying $\lambda^{d_1}=e^{\sfrac{2 \pi i r}{s}}$ is equal to $\lambda=e^{\frac{2 \pi i }{d_1}(\frac{r}{s}+l_0)}$.
\end{lemma}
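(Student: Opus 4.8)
The plan is to specialise immediately to $\alpha = 1$ (so $\beta = 0$), where both $g_{1,j}$ and $\hat g_{1,j_0}$ factor completely, and then to verify the asserted root by direct substitution of the defining relations of Definition \ref{def:deltaetc}.

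First I would record that with $\beta = 0$ one has $g_{1,j}(t) = t^{qd_1}-t^{qd_1-s_1}e^{\frac{2\pi\ii j}{\delta}} = t^{qd_1-s_1}\bigl(t^{s_1}-e^{\frac{2\pi\ii j}{\delta}}\bigr)$, to be read as a rational function when $qd_1 < s_1$ but with the same nonzero roots in either case. Thus the nonzero roots of $g_{1,j}$ are exactly the $s_1$-th roots of $e^{2\pi\ii j/\delta}$. Since $s = s_1\delta$, we get $\bigl(e^{2\pi\ii r/s}\bigr)^{s_1} = e^{2\pi\ii r/\delta}$, and this equals $e^{2\pi\ii j/\delta}$ if and only if $r\equiv j \pmod{\delta}$; because $j \in \{0,\dots,\delta-1\}$ and $j_0$ is by definition the residue of $r$ modulo $\delta$, this forces $j = j_0$ (and conversely $j=j_0$ does give a root), establishing the first assertion. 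This is also consistent with Lemma \ref{lem:powers}.

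For the second assertion I would simply test the candidate $\lambda := e^{\frac{2\pi\ii}{d_1}\left(\frac{r}{s}+l_0\right)}$. Because $l_0\in\mathbb{Z}$, raising to the power $d_1$ gives $\lambda^{d_1} = e^{2\pi\ii(r/s+l_0)} = e^{2\pi\ii r/s}$, which is the required relation. To see that $\lambda$ is a root of $\hat g_{1,j_0}$ it suffices, again using $\beta = 0$ and $\lambda \neq 0$, to check $\lambda^{s_1} = e^{2\pi\ii j_0/(\delta d_1)}$. Expanding, $\lambda^{s_1} = e^{\frac{2\pi\ii r s_1}{d_1 s}}e^{\frac{2\pi\ii s_1 l_0}{d_1}} = e^{\frac{2\pi\ii r}{\delta d_1}}e^{\frac{2\pi\ii s_1 l_0}{d_1}}$; substituting $r = r_1\delta + j_0$ and then $r_1 + s_1 l_0 = d_1\hat r$ turns the exponent into $\frac{2\pi\ii j_0}{\delta d_1}+2\pi\ii\hat r$, and since $\hat r\in\mathbb{Z}$ this yields $\lambda^{s_1} = e^{2\pi\ii j_0/(\delta d_1)}$, as wanted. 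If uniqueness of this root is desired, note that the nonzero roots of $\hat g_{1,j_0}$ are $\lambda e^{2\pi\ii k/s_1}$ for $k=0,\dots,s_1-1$, and $(\lambda e^{2\pi\ii k/s_1})^{d_1}=\lambda^{d_1}$ forces $s_1 \mid k d_1$, hence $s_1\mid k$ by $\gcd(s_1,d_1)=1$, so $k=0$.

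The computation is entirely elementary, so there is no genuinely hard step; the one place that needs care is the verification that $\lambda$ is a root of $\hat g_{1,j_0}$, which must juggle the three defining identities $s=s_1\delta$, $r=r_1\delta+j_0$ and $r_1=d_1\hat r-l_0 s_1$ simultaneously, with the integrality of $l_0$ and $\hat r$ being exactly what collapses the stray exponential factors to $1$.
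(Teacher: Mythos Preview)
Your proof is correct and follows essentially the same approach as the paper: both specialise to $\alpha=1,\ \beta=0$, factor $g_{1,j}$ and $\hat g_{1,j_0}$ explicitly, and use the defining relations $s=s_1\delta$, $r=r_1\delta+j_0$, $r_1=d_1\hat r - l_0 s_1$ to pin down $j_0$ and the correct root $\lambda$. The only cosmetic difference is that the paper lists the roots of $\hat g_{1,j_0}$ as $e^{\frac{2\pi\ii}{s_1}(l+\frac{j_0}{d})}$ and solves for the index $l=\hat r$, whereas you verify the candidate $\lambda$ directly and then argue uniqueness; these are equivalent.
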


\begin{proof}
As the nonzero roots of $g_{1,j}$ are of the form $e^{\frac{2 \pi i}{s_1} (k \delta+j)}$ for $k=0,\ldots,s_1-1$, it is not hard to see that $g_{1,j_0}$ has $e^{\sfrac{2 \pi i r}{s}}$ as a root, only for $ j_0 \in \{0,\ldots, \delta-1\}$ determined by the equation $$r=r_1 \delta + j_0.$$  

Now we would like to identify the root $\lambda$ of  $\hat g_{1,j_0}$, that satisfies 
\begin{equation}\label{eq:ld}
\lambda^{d_1}= e^{\sfrac{2 \pi i r}{s}}.
\end{equation}
The nonzero roots of $\hat g_{1, j_0}$ are equal to $\lambda_l=e^{\frac{ 2 \pi i}{s_1}(l+\frac{j_0}{d})}$ for $l \in \{0,\ldots, s_1-1\}$. Hence $\lambda_l$ satisfies \eqref{eq:ld} if an only if $l d+j_0$ is congruent to $r$ modulo $s$. This is equivalent to $l d-r_1 \delta$ being an integer multiple of $s$, i.e. $ l d_1-r_1$ being an integer multiple of $s_1$. Let $\hat r  \in \{0,\ldots, s_1-1\}$ and $l_0 \in \{0,\ldots, d_1-1\}$ be the nonnegative integers that solve the equation $r_1=d_1 \hat r-l_0 s_1$, as in Definition \ref{def:deltaetc}. From here we deduce that $$\lambda_{\hat r}=e^{\frac{2 \pi i}{s_1}(\hat r+\frac{j_0}{\delta d_1})}=e^{\frac{2 \pi i }{d_1}(\frac{r}{s}+l_0)}$$ satisfies \eqref{eq:ld}. 
\end{proof}

%

%
%
%


Next lemma illustrates why it is beneficial to consider the roots of $\hat g_{\alpha,j}$ rather than the roots of $g_{\alpha,j}$ directly. 

\begin{lemma}
	\label{lem:phi}
Let $(\frac{p}{q},\frac{r}{s}) \in \F_n$ and $\hat g_{\alpha,j}$ as defined in \eqref{eq:hg} with a nonzero root $\rho e^{\ii  \tau}$. Then $F_{ \tau,j}( \rho)=0$, where
	\begin{equation}\label{eq:qs1II} 
F_{\tau,j}(\rho)\equiv   \rho^{s_1} \sin(q d_1 \tau)-  \rho^{qd_1} \sin \left(s_1  \tau -\frac{2 \pi j}{\delta d_1} \right)-\sin\left((qd_1 - s_1)\tau +\frac{2 \pi j}{\delta d_1} \right),
\end{equation}
and 
\begin{equation}\label{eq:qs2II}
\alpha \sin\left((qd_1 - s_1)\tau +\frac{2 \pi j}{\delta d_1} \right)= \rho^{s_1} \sin(q d_1 \tau).
\end{equation}

Furthermore,  for $j=j_0$, $\theta \in \arg(q,s)$ and 
\begin{equation}\label{eq:theta}
\hat \theta\equiv \frac{1}{d_1}\left(\theta+2 \pi l_0\right)  
\end{equation}
there is a unique positive solution $\hat \rho$ to $F_{\hat \theta,j_0}(\hat \rho)=0$ such that $\hat \rho \in (0,1]$, and in this case $\alpha$ is uniquely defined by \eqref{eq:qs2II}.
\end{lemma}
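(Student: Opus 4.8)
The statement has two parts and the plan is to handle them in turn. For the algebraic identities, valid for any nonzero root $\rho e^{\ii\tau}$ of $\hat g_{\alpha,j}$, set $t=\rho e^{\ii\tau}$ and use $\beta=1-\alpha$ to rewrite $\hat g_{\alpha,j}(t)=0$ in the form $t^{qd_1}-1=\alpha\bigl(t^{qd_1-s_1}e^{2\pi\ii j/(\delta d_1)}-1\bigr)$. Writing $z=t^{qd_1}-1$ and $w=t^{qd_1-s_1}e^{2\pi\ii j/(\delta d_1)}-1$, we have $z=\alpha w$ with $\alpha\in\R$. Comparing imaginary parts of $z=\alpha w$ and dividing by $\rho^{qd_1-s_1}\ne 0$ yields \eqref{eq:qs2II} at once. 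Since $\alpha$ is real, $z\bar w=\alpha|w|^2\in\R$, so $\operatorname{Im}(z\bar w)=0$; expanding $z\bar w$ in terms of $\rho$ and $\tau$, taking the imaginary part with the sine angle-addition formula, and dividing by $\rho^{qd_1-s_1}$, one obtains exactly $-F_{\tau,j}(\rho)=0$. This is a routine trigonometric computation.

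For the ``Furthermore'' part, fix $j=j_0$, $\theta\in\arg(q,s)$ and $\hat\theta$ as in \eqref{eq:theta}; by the conjugation symmetry of Lemma \ref{lem:conjugate pairs} we may assume $\theta\in(\sfrac{2\pi p}{q},\sfrac{2\pi r}{s})$. With $\psi=2\pi j_0/(\delta d_1)$ we have $F_{\hat\theta,j_0}(\rho)=a\rho^{s_1}+b\rho^{qd_1}-c$, where $a=\sin(qd_1\hat\theta)$, $b=-\sin(s_1\hat\theta-\psi)$ and $c=\sin\bigl((qd_1-s_1)\hat\theta+\psi\bigr)$. Applying Lemma \ref{lem:sin} with $\tau=\theta$ gives $a\ge 0$ (vanishing only at the left endpoint), $b\ge 0$ (vanishing only at the right endpoint) and $c>0$; in particular $a$ and $b$ are never simultaneously zero, and since $qd\ne s$ for $q\ge 2$ the exponents $s_1$ and $qd_1$ are distinct positive integers. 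Consequently $F_{\hat\theta,j_0}$ is continuous and strictly increasing on $[0,\infty)$, with $F_{\hat\theta,j_0}(0)=-c<0$ and $F_{\hat\theta,j_0}(\rho)\to+\infty$, so it has a unique positive zero $\hat\rho$; and $\alpha$ is then forced by \eqref{eq:qs2II} because its coefficient $c$ is nonzero (a check of the real part then confirms that $\hat\rho e^{\ii\hat\theta}$ really is a root of $\hat g_{\alpha,j_0}$ for this value of $\alpha$).

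It remains to prove $\hat\rho\le 1$, equivalently $F_{\hat\theta,j_0}(1)\ge 0$. Using the identity $\sin u-\sin v-\sin(u-v)=-4\sin\tfrac u2\sin\tfrac v2\sin\tfrac{u-v}{2}$ with $u=qd_1\hat\theta$ and $v=s_1\hat\theta-\psi$ (so $u-v=(qd_1-s_1)\hat\theta+\psi$) gives $F_{\hat\theta,j_0}(1)=-4\sin\tfrac u2\sin\tfrac v2\sin\tfrac{u-v}{2}$. The plan is to read off the sign of each half-angle sine from the sharper estimates appearing inside the proof of Lemma \ref{lem:sin}: writing $\tfrac u2$, $\tfrac v2$, $\tfrac{u-v}{2}$ as integer multiples of $\pi$ plus residual angles lying in $[-\tfrac\pi2,\tfrac\pi2]$, one finds $\operatorname{sgn}\sin\tfrac u2=(-1)^{p+l_0q}$, $\operatorname{sgn}\sin\tfrac v2=(-1)^{\hat r+1}$ and $\operatorname{sgn}\sin\tfrac{u-v}{2}=(-1)^{p+l_0q+\hat r}$, with the corresponding magnitude factors nonnegative, nonnegative and strictly positive respectively. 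The product of the three sign factors equals $-1$, so $\sin\tfrac u2\sin\tfrac v2\sin\tfrac{u-v}{2}\le 0$, whence $F_{\hat\theta,j_0}(1)\ge 0$, as needed.

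I expect the sign computation in the last step to be the main obstacle: it rests not on the sign conclusions of Lemma \ref{lem:sin} but on the finer information in its proof, namely the localisation of $q\theta$, $\tfrac1d(2\pi r-s\theta)$ and $h(\theta)=\tfrac1d(2\pi r-s\theta)+q\theta$ within intervals of length at most $\pi$ modulo $2\pi$. Once those placements are recorded, the rest is bookkeeping with the half-angle identity.
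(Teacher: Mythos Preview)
Your proof is correct and follows the paper's approach: separate $\hat g_{\alpha,j}(\rho e^{\ii\tau})=0$ into real and imaginary parts to obtain \eqref{eq:qs1II} and \eqref{eq:qs2II}, then invoke Lemma~\ref{lem:sin} to see that $F_{\hat\theta,j_0}$ is strictly increasing on $(0,\infty)$ with $F_{\hat\theta,j_0}(0)<0$ and $F_{\hat\theta,j_0}(1)\ge 0$. The paper's proof simply asserts the last inequality without justification; you supply one, which is good.

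Your parity bookkeeping for $F_{\hat\theta,j_0}(1)\ge 0$ is correct but can be streamlined. From the identities in the proof of Lemma~\ref{lem:sin} one has, modulo $2\pi$,
\[
qd_1\hat\theta\equiv x,\qquad s_1\hat\theta-\tfrac{2\pi j_0}{\delta d_1}\equiv -y,\qquad (qd_1-s_1)\hat\theta+\tfrac{2\pi j_0}{\delta d_1}\equiv x+y,
\]
where $x=q\theta-2\pi p\in[0,\tfrac{2\pi}{s}]$ and $y=\tfrac{2\pi r-s\theta}{d}\in[0,\tfrac{2\pi}{qd}]$, so that $F_{\hat\theta,j_0}(1)=\sin x+\sin y-\sin(x+y)=4\sin\tfrac{x}{2}\sin\tfrac{y}{2}\sin\tfrac{x+y}{2}\ge 0$ (strict for $\theta$ in the open sector, since then $x,y>0$). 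This is exactly your product identity after the $2\pi$-shifts have been carried out first, avoiding the need to track the signs $(-1)^{p+l_0q}$, $(-1)^{\hat r+1}$, $(-1)^{p+l_0q+\hat r}$ separately.
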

\begin{proof} 
By considering the real and imaginary parts of the equation $\hat g_{\alpha,j}(\rho e^{\ii \tau})=0,$ then cross--multiplying and simplifying, \eqref{eq:qs1II} is readily established. Evidently \eqref{eq:qs2II} follows directly from the imaginary part of $\hat g_{\alpha,j_0}(\rho e^{\ii \tau})=0.$ 

From Lemma \ref{lem:sin} we find that for $j=j_0$ and $\hat \theta=\frac{1}{d_1}\left(\theta+2 \pi l_0\right)$ the coefficients of $\hat \rho^{s_1}$ and $\hat \rho^{qd_1}$ in \eqref{eq:qs1II} are nonnegative; further,  $F_{\hat \theta,j_0}(0)<0$ and  $F_{\hat \theta,j_0}(1)>0$. Hence, $F_{\hat \theta,j_0}(\hat \rho)$ is strictly increasing as a function of $\hat \rho$, and has a unique positive root in $(0,1]$, as claimed. 
\end{proof}

In the case $d=1$, we have $\delta=1$ and $\phi_{\alpha}=g_{\alpha,0}=\hat g_{\alpha,0}$. In this case Lemma \ref{lem:phi} directly gives us a result for the special case of Type I reduced Ito polynomials. 

\begin{theorem}\label{thm:Type I} 
Suppose that  $(\frac{p}{q},\frac{r}{s}) \in \F_n$ and that $d=1$. 
Let $f_{\alpha}(t)$, $\alpha \in [0,1]$, be the Type I  reduced Ito polynomial   corresponding to $\hK_n(q,s)$. For every $\theta \in \arg(q,s)$ there exist unique $\rho \in (0,1]$ and $\alpha \in [0,1]$ such that $\rho e^{\im\theta}$ is a root of $f_{\alpha}(t)$. Furthermore, $\rho$ satisfies $F_{\theta}(\rho)=0$, where \begin{equation}\label{eq:rho T1}
F_{\theta}(\rho)\equiv \rho^s \sin(q \theta)-\rho^q \sin(s \theta) +\sin((s-q) \theta).
\end{equation} 
In particular, $\rho e^{\ii \theta}$ is the point on $\partial \Theta_n$ with argument $\theta. $ 
\end{theorem}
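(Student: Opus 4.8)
The plan is to leverage Lemma \ref{lem:phi} in the degenerate case $d=1$, and then to argue that the root it produces is not merely \emph{a} root of $f_\alpha$ but precisely the one lying on $\partial\Theta_n$. First I would note that when $d=1$ we automatically have $\delta=\gcd(1,s)=1$, so that $d_1=1$, $s_1=s$, $j_0=0$, $l_0=0$, and hence $\hat\theta=\theta$; moreover $\phi_\alpha=g_{\alpha,0}=\hat g_{\alpha,0}=f_\alpha$ (the exponent $\epsilon=\min\{0,q-s\}=q-s$ only reintroduces a factor of $t^{s-q}$, which accounts for the difference between the rational function $\phi_\alpha$ and the polynomial $f_\alpha$, and does not affect nonzero roots). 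Substituting these values into \eqref{eq:qs1II} collapses $F_{\tau,j_0}$ to exactly $F_\theta(\rho)=\rho^s\sin(q\theta)-\rho^q\sin(s\theta)+\sin((s-q)\theta)$ as in \eqref{eq:rho T1}, and the final clause of Lemma \ref{lem:phi} hands us a unique $\hat\rho\in(0,1]$ with $F_\theta(\hat\rho)=0$, together with the $\alpha\in[0,1]$ determined by \eqref{eq:qs2II}, so that $\hat\rho e^{\im\theta}$ is a root of $f_\alpha$.

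Next I would address uniqueness of the pair $(\rho,\alpha)$ among \emph{all} of $[0,1]^2$, not just the restricted solution from the lemma. Given $\theta\in\arg(q,s)$, suppose $\rho e^{\im\theta}$ is a root of $f_\alpha$ for some $\rho\in(0,1]$, $\alpha\in[0,1]$. Splitting $f_\alpha(\rho e^{\im\theta})=0$ into real and imaginary parts gives two equations; the imaginary part is exactly \eqref{eq:qs2II} (with $d_1=1$, $s_1=s$, $j_0=0$), which by Lemma \ref{lem:sin}(3) has a strictly positive coefficient of $\alpha$, so $\alpha$ is determined by $\rho$ and $\theta$; eliminating $\alpha$ between the two equations yields $F_\theta(\rho)=0$, and since $\sin(q\theta)\ge 0$ and $-\sin(s\theta)\cdot(\text{sign considerations})$—more precisely, by Lemma \ref{lem:sin} the relevant trigonometric coefficients make $F_\theta$ strictly increasing on $(0,\infty)$—there is at most one such $\rho$. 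Hence the pair $(\rho,\alpha)$ is unique. (The endpoint cases $\theta=\sfrac{2\pi p}{q}$ and $\theta=\sfrac{2\pi r}{s}$, where $\rho=1$, should be checked separately or excluded by working on the open sector, consistent with the definition of $\arg(q,s)$.)

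Finally I would argue that $\hat\rho e^{\im\theta}$ is the point of $\partial\Theta_n$ with argument $\theta$. By Theorem \ref{thm:Karpelevic}, $\K_n(\sfrac{p}{q},\sfrac{r}{s})$ is parametrised by the roots of $f_\alpha$, $\alpha\in[0,1]$, lying in the sector, so $\partial\Theta_n$ meets the ray of argument $\theta$ at a root of some $f_{\alpha^*}$. By the uniqueness just established, that root must be $\hat\rho e^{\im\theta}$ with $\alpha^*=\alpha$. One should also confirm that as $\alpha$ ranges over $[0,1]$ the corresponding root traces a continuous arc from $e^{2\pi\im p/q}$ (at $\alpha=0$, using that $f_0(t)=t^{s-q}(t^q-1)$, cf. the $\alpha=0$ discussion preceding Lemma \ref{lem:1}) to $e^{2\pi\im r/s}$ (at $\alpha=1$, by Lemma \ref{lem:1} with $d_1=1$), so that every $\theta$ in the open sector is attained; continuity is Corollary \ref{cor:diff}. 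I expect the main obstacle to be the bookkeeping that pins down \emph{which} root of $f_\alpha$ the monotone function $F_\theta$ selects—i.e.\ verifying that the root furnished by the sign analysis of Lemma \ref{lem:sin} genuinely has argument $\theta$ (as opposed to some other argument congruent mod $2\pi/\gcd$), which here is painless because $d_1=1$ kills the multi-sheeted ambiguity seen in Example \ref{Theta12}, but which must nonetheless be stated cleanly.
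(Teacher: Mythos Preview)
Your proposal is correct and follows the same approach as the paper, which treats Theorem~\ref{thm:Type I} as an immediate specialisation of Lemma~\ref{lem:phi} to the case $d=1$ (so $\delta=d_1=1$, $s_1=s$, $j_0=l_0=0$, $\hat\theta=\theta$, and $\phi_\alpha=g_{\alpha,0}=\hat g_{\alpha,0}$). Your write-up is considerably more detailed than the paper's one-line justification; in particular you spell out explicitly the step showing that the unique root with argument $\theta$ must be the boundary point---via Karpelevi\v c's theorem plus uniqueness---which the paper leaves implicit.
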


	Observe that Theorem \ref{thm:Type I}  reparameterises and sharpens  the description of $\partial \Theta_n$ in the sector $\arg(q,s)$ when $\left \lfloor \frac{n}{q} \right \rfloor =1.$ The corresponding Type I polynomial has precisely one root in that sector, and Theorem \ref{thm:Type I}  describes the corresponding boundary arc in terms of the argument $\theta $ rather than in terms of the parameter $\alpha.$ 

\begin{remark}  \label{typeIint}{\rm{ Here we focus on the special case of Type 1 reduced Ito polynomials. The paper \cite{Ki} provides a description of  $L_n,$ the set of all eigenvalues (distinct from $1$) of all $n\times n$ stochastic companion matrices.  (Evidently $L_n \subseteq \Theta_n$.) 
	Lemma 4 of \cite{Ki} considers the possibility that two Type I polynomials, say 
$$	t^{s_1}-\alpha_1 t^{s_1-q_1}-(1-\alpha_1), {\mbox{ and }} \\
	 t^{s_2}-\alpha_2 t^{s_2-q_2}-(1-\alpha_2)$$
have a common root. The conclusion of that result is that if $s_1, s_2 \le n,$ and the two polynomials are distinct -- i.e. either $(s_1,q_1) \ne (s_2,q_2)$, or $(s_1,q_1) = (s_2,q_2)$
and $\alpha_1 \ne \alpha_2$, -- then any common root of those polynomials is either: i) a root of unity; ii) a real number; or iii) in the interior of $L_n$.

Suppose that we have a Farey pair $(\frac{y}{w}, \frac{x}{y} )$ in $\F_n$, and without loss of generality we assume that $w<y$. Suppose further that $t$ is a root of  the Type I polynomial $t^s-\alpha t^{s-q}-(1-\alpha)$, and that $arg(t) \in \arg(w,y).$ If $(w,y)=(q,s),$ then Karpelevi\v c's theorem, in conjunction with Theorem \ref{thm:Type I}, shows that $t \in \partial \Theta_n.$ On the other hand, suppose that $(w,y) \ne (q,s)$. By Theorem 4 of \cite{Ki}, the boundary of $L_n$ in the sector  $\arg(w,y)$ is  given by roots of $t^y - \alpha t^{y-w}-(1-\alpha), \alpha \in (0,1)$. 
It now follows from Lemma 4 of \cite{Ki} that $t$ must be an interior point of $L_n,$ and hence $t$ is also an interior point of $\Theta_n$.  
}}
\end{remark}

In the general case we need to assume $d \geq 1$ and $\delta \geq 1$. Given $\theta \in \arg(q,s)$ we know that a root in $\partial \Theta_n$ with the argument $\theta$ is a root of $g_{\alpha,j}$ for some $j=0,\ldots,\delta-1$. We determined that $j$ is equal to $j_0$ for $\alpha=1$, and we need to show that this choice of $j$ does not change as $\alpha$ runs through the interval $(0,1)$. To prove this we need a few 
supporting results on the behaviour of roots on $\delta \Theta_n$. 

\begin{lemma}\label{lem:rho}
For each argument $\theta \in [0,2 \pi)$ and $n \geq 2$ there is unique $\rho$ so that $\rho e^{\ii \theta}$ is on the boundary of $\Theta_n$. 
\end{lemma}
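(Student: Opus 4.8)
The plan is to combine the star-shapedness of $\Theta_n$ with a compactness argument. Since $\Theta_n$ is the set of all eigenvalues of all stochastic matrices of order $n$, it is a closed set (it is the image of the compact set of order-$n$ stochastic matrices under the continuous ``spectrum'' map, or one can argue via limits of eigenvalues of converging sequences of stochastic matrices), it contains the origin (the matrix $\frac{1}{n}J$ has $0$ as an eigenvalue when $n\ge 2$), and it is star-shaped with respect to the origin, a fact already invoked in the introduction. For a fixed argument $\theta$, consider the ray $R_\theta=\{\varrho e^{\ii\theta}:\varrho\ge 0\}$. The set $S_\theta\equiv\{\varrho\ge 0: \varrho e^{\ii\theta}\in\Theta_n\}$ is nonempty (it contains $0$), bounded (by $1$, since $\Theta_n$ lies in the closed unit disc), and, by star-shapedness, an interval of the form $[0,\rho]$ or $[0,\rho)$; closedness of $\Theta_n$ forces it to be $[0,\rho]$ for some $\rho=\rho(\theta)\in(0,1]$. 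Then $\rho e^{\ii\theta}$ lies in $\Theta_n$ but no point $\varrho e^{\ii\theta}$ with $\varrho>\rho$ does, which is exactly the statement that $\rho e^{\ii\theta}$ is on $\partial\Theta_n$; and uniqueness is immediate, since any other point of $R_\theta\cap\partial\Theta_n$ would have to be an interior-ray point $\varrho e^{\ii\theta}$ with $\varrho<\rho$, contradicting that such points lie in the interior of the interval $[0,\rho]$ (hence, by star-shapedness with a slightly larger scaling, in the interior of $\Theta_n$).

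Concretely, I would carry out the steps in this order. First, record that $\Theta_n$ is closed and contains $0$ — the latter because for $n\ge 2$ there exist order-$n$ stochastic matrices with $0$ as an eigenvalue (e.g. any rank-one stochastic matrix $\mathbf{1}v^\top$ with $v$ a probability vector). Second, invoke star-shapedness with respect to the origin: if $z\in\Theta_n$ then $cz\in\Theta_n$ for all $c\in[0,1]$. Third, fix $\theta$ and define $S_\theta$ as above; deduce from the previous two facts that $S_\theta$ is a closed bounded interval $[0,\rho]$ with $\rho>0$ (positivity because $\Theta_n$ is a neighbourhood of $0$ along every ray — indeed $\Theta_n\supseteq$ some disc about $0$, since small real perturbations of $\frac1n J$ keep stochasticity and move the zero eigenvalue; alternatively $\Theta_n$ contains the real segment which suffices combined with star-shapedness only if $\theta=0,\pi$, so the cleanest route is to note $\Theta_n$ contains a neighbourhood of $0$). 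Fourth, conclude $\rho e^{\ii\theta}\in\partial\Theta_n$ and that it is the only boundary point on $R_\theta$.

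The only genuinely delicate point is establishing $\rho(\theta)>0$ uniformly, i.e. that the origin is an interior point of $\Theta_n$, so that every ray actually meets the boundary at a positive radius rather than degenerating. This can be handled by exhibiting, for $n\ge 2$, an explicit family of stochastic matrices whose spectra fill a full disc around $0$: for instance, matrices of the form $(1-\epsilon)\tfrac1n J + \epsilon C$ where $C$ ranges over permutation matrices and $\epsilon$ is small, giving eigenvalues $\epsilon\,\omega$ for $\omega$ any $k$-th root of unity with $k\mid n$, $k\le n$; together with star-shapedness this shows the disc of radius $\epsilon$ about the origin lies in $\Theta_n$. I expect this verification — and phrasing the closedness of $\Theta_n$ cleanly — to be the bulk of the (short) argument; everything else is the one-dimensional observation that a star-shaped closed set meets each ray from the center in a closed segment. $\Box$
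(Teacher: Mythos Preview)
There is a genuine gap in the uniqueness step. You argue that for $\varrho<\rho$ the point $\varrho e^{\ii\theta}$ must lie in the interior of $\Theta_n$, invoking only that $\Theta_n$ is closed, star--shaped from $0$, and contains a disc about $0$. But those three properties alone do \emph{not} force the boundary to meet each ray in a single point. A counterexample is the set $\overline{D(0,1)}\cup[1,2]\subset\C$: it is closed, star--shaped with centre $0$, and has $0$ in its interior, yet along the ray $\theta=0$ every point of $(1,2]$ is a boundary point. Your parenthetical ``by star--shapedness with a slightly larger scaling'' does not rescue this; scaling by $c<1$ sends $\Theta_n$ into $\Theta_n$, but it does not produce an open neighbourhood of $\varrho e^{\ii\theta}$ unless you already know an open neighbourhood of some larger point on the ray lies in $\Theta_n$, which is exactly what is in question. (Relatedly, your sketch that $0$ is interior is also incomplete: the matrices $(1-\epsilon)\tfrac1nJ+\epsilon C$ contribute only finitely many radii, not a full disc; and for $n=2$ the set $\Theta_2=[-1,1]$ has empty interior in $\C$, so the argument cannot work as written there.)

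The paper's proof proceeds differently precisely because purely topological input is insufficient. What star--shapedness \emph{does} give is the implication in the other direction: if $xe^{\ii\theta_0}$ is interior then so is $c x e^{\ii\theta_0}$ for every $c\in(0,1]$ (since $c\cdot\mathrm{int}(\Theta_n)$ is open and contained in $\Theta_n$). Hence if two points $\rho_1 e^{\ii\theta_0}$ and $\rho_2 e^{\ii\theta_0}$, $\rho_1<\rho_2$, were both on $\partial\Theta_n$, then every $xe^{\ii\theta_0}$ with $x\in(\rho_1,\rho_2)$ would also be on $\partial\Theta_n$. The paper then uses the algebraic description of $\partial\Theta_n$: each such boundary point is a root of some $g_{\alpha,j}$, hence by Lemma~\ref{lem:powers} and Lemma~\ref{lem:phi} the quantity $x^{1/d_1}$ satisfies $F_{\tau_{l},j}(x^{1/d_1})=0$ for some pair $(j,l)$ drawn from a \emph{finite} set. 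By pigeonhole one fixed pair $(j',l')$ must work for infinitely many $x$, forcing a nontrivial two--term trigonometric polynomial in $x^{1/d_1}$ to vanish on an infinite set --- impossible. The essential extra ingredient you are missing is this algebraic finiteness coming from Karpelevi\v c's description of the boundary arcs; without it (or some equivalent structural fact about $\partial\Theta_n$) the lemma cannot be deduced from star--shapedness and compactness alone.
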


\begin{proof} 
Suppose, to the contrary, that for some $\theta_0$ we have $\rho_1 e^{\ii \theta_0}$ and $\rho_2 e^{\ii \theta_0}$, $\rho_1 < \rho_2$, both on the boundary of $
\Theta_n$. Necessarily there is a Farey pair $(\sfrac{p}{q}, \sfrac{r}{s})$ of order $n$ such that $\theta_0 \in \arg(q,s).$ 
 From the fact that $\Theta_n$ is star--shaped with respect to the origin, it follows that for any $x$ with $\rho_1 < x < \rho_2$, $xe^{\ii \theta_0}$ must be on  the boundary of $\Theta_n$ (such a point is certainly in $\Theta_n$, and it cannot be in the interior of that set, otherwise $\rho_1 e^{\ii \theta_0}$ would also be an interior point).  But then for each such $x,$ there is a $j(x) \in \{0,\ldots,\delta-1\}$  such that $xe^{\ii \theta_0}$ is a root of $g_{\alpha,j(x)}$ for some $\alpha \in (0,1)$, and by Lemma \ref{lem:powers} there is a $l(x) \in \{0,\ldots,d_1-1\}$ such that $x^{\frac{1}{d_1}}e^{\ii \tau_{l(x)}}$ is a root of $\hat g_{\alpha, j(x)}$ for $\tau_{l(x)}=\frac{1}{d_1}(\theta_0+2 \pi l(x))$. 
In particular, there exist $j' \in \{0,\ldots,\delta-1\}$ and $l' \in \{0,\ldots,d_1-1\}$ such that $x^{\frac{1}{d_1}}e^{\ii \tau_{l'}}$ is a root of $\hat g_{\alpha, j'}$ for infinitely many choices of $x$.  
For all such $x$, we have by Lemma \ref{lem:phi} that $F_{\tau_{l'},j'}(x^{\frac{1}{d_1}})=0$:
\begin{align*}
&x^{\frac{s_1}{d_1}}\sin(q (\theta_0+2 \pi l'))-x^{q} \sin \left(\frac{s_1}{d_1}(\theta_0+2 \pi l')
-\frac{2 \pi j'}{\delta d_1} \right)-\\
&-\sin\left((q -\frac{s_1}{d_1})(\theta_0+2 \pi l') +\frac{2 \pi j'}{\delta d_1} \right)=0.
\end{align*}
 From the fact that $q$ and $s$ are relatively prime, it follows that at least one  of the coefficients of $x^{\frac{s_1}{d_1}}$ and $x^{q}$ in the equation above is nonzero. 
Evidently this equation cannot hold for infinitely many $x \in (\rho_1, \rho_2)$, a contradiction. 
	\end{proof}

\begin{lemma}\label{z1z2a}
Let $0<\rho< \min\{\rho',1\}$, $\rho' \ne 1$,  and $z \in \C$ with $|z|=\rho$.
There is at most one complex number $z'$ satisfying $|z'|=\rho'$, and 
$z - \alpha z' -(1-\alpha)=0$ for some $\alpha \in (0,1).$
\end{lemma}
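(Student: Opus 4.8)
The plan is to reformulate the condition $z-\alpha z'-(1-\alpha)=0$ as a statement about how $z'$ is determined by $z$ and $\alpha$, and then show that the constraint $|z'|=\rho'$ pins down $\alpha$ uniquely, whence $z'$ is unique. Solving for $z'$ directly, we get $z' = \frac{z-(1-\alpha)}{\alpha} = 1 + \frac{z-1}{\alpha}$. Thus $z'$ lies on the ray emanating from $1$ in the direction of the fixed vector $z-1$ (note $z \ne 1$ since $|z| = \rho < 1$), and as $\alpha$ decreases from $1$ to $0$, the point $z' = 1 + \frac{1}{\alpha}(z-1)$ travels along that ray starting at $z$ (when $\alpha = 1$) and moving away from $1$ to infinity. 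So the locus of candidate points $z'$ is a half-line $L$, parametrised monotonically by $\alpha \in (0,1)$.

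The key step is then to argue that the circle $|z'| = \rho'$ meets this half-line $L$ in at most one point. First I would observe that $|z'|$ is a strictly monotincreasing function of the arc-length parameter along $L$ once we are moving away from $1$: indeed, for $\alpha \in (0,1)$ the point $z'=1+\tfrac1\alpha(z-1)$ has $|z'-1| = \tfrac1\alpha|z-1|$ strictly increasing as $\alpha$ decreases, and $|z'| \ge |z'-1| - 1$... more carefully, I would compute $|z'|^2$ as a function of $u := 1/\alpha \in (1,\infty)$: writing $z-1 = w$, we have $|z'|^2 = |1+uw|^2 = 1 + 2u\,\mathrm{Re}(w) + u^2|w|^2$, a quadratic in $u$ with positive leading coefficient and vertex at $u^* = -\mathrm{Re}(w)/|w|^2$. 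Since $|z| = \rho < 1$, we have $\mathrm{Re}(z) < 1$ only guarantees $\mathrm{Re}(w) < 0$, so $u^* > 0$; I need that $u^* \le 1$, equivalently $-\mathrm{Re}(w) \le |w|^2$, i.e. $1 - \mathrm{Re}(z) \le |z-1|^2 = 1 - 2\mathrm{Re}(z) + |z|^2$, i.e. $\mathrm{Re}(z) \le |z|^2$, which holds since $\mathrm{Re}(z) \le |z| \le |z|^2$ would be false for $|z|<1$ — so this is the delicate point.

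The main obstacle is precisely handling the case where $|z'|^2$, as a function of $u = 1/\alpha$, is not monotone on $(1,\infty)$, i.e. when its vertex $u^*$ falls inside $(1,\infty)$; there the circle $|z'|=\rho'$ could in principle meet $L$ twice. I would resolve this by using the hypothesis $\rho < \rho'$ together with $\rho' \ne 1$: at $\alpha = 1$ (i.e. $u=1$) we have $|z'| = \rho$, and we need $|z'| = \rho'$ for some $\alpha < 1$, i.e. $u > 1$; if the quadratic in $u$ dips below $\rho$ before rising to $\rho'$, the relevant branch is still the increasing one past the vertex, and on that branch there is exactly one solution with value $\rho'$, while on the decreasing branch $|z'| \le \rho < \rho'$ so no solution occurs there. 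Concretely: on $(1, u^*]$ the function $|z'|^2$ decreases from $\rho^2$, hence stays $< \rho'^2$; on $[u^*, \infty)$ it increases from its minimum to $+\infty$, hitting $\rho'^2$ exactly once. In the remaining case $u^* \le 1$, $|z'|^2$ is increasing throughout $(1,\infty)$ from $\rho^2 < \rho'^2$, again giving exactly one solution. Either way at most one $z'$ arises, completing the proof.
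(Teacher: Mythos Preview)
Your argument is correct and is essentially an algebraic rendering of the paper's geometric proof: both observe that any admissible $z'$ lies on the line through $1$ and $z$, then intersect with the circle $|z'|=\rho'$. The paper argues by picture---the line meets the circle in two points, and since $|z|<\rho'$ the point $z$ sits strictly between them, so only one of the two can have $z$ as a convex combination with $1$---while you parametrise the relevant half-line by $u=1/\alpha>1$, compute $|z'|^2$ as a quadratic in $u$, and use $f(1)=\rho^2<\rho'^2$ to show the level set $f(u)=\rho'^2$ is hit exactly once on $(1,\infty)$; your observation that the decreasing branch stays below $\rho^2$ is precisely the algebraic counterpart of the paper's placement argument. (Incidentally, your analysis never actually invokes $\rho'\ne 1$, and indeed the conclusion holds without it.)
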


\begin{proof}
Let $L$ be the line through $1$ and $z$, and let $C$ be the circle centred at the origin of radius $\rho'$. Observe that $L$ and $C$ intersect in precisely two points, say $z_1, z_2$.  Considering all possible positions of $1$ relative to $z$, $z_1$ and $z_2$ on the line $L$, and using the fact that $|z|=\rho < \rho'$ (Figure \ref{lfig:test} illustrates the two admissible placements of  $1, z, z_1$ and $z_2$),  
it is clear that $z$ cannot be both a convex combination of $z_1$ and $1$ and also a convex combination of  $z_2$ and $1$.  The conclusion now follows. 
\end{proof}

\begin{figure} [h!] \label{figz1} 
\centering
\begin{subfigure}{.5\textwidth}
  \centering
  \includegraphics[width=0.8\linewidth]{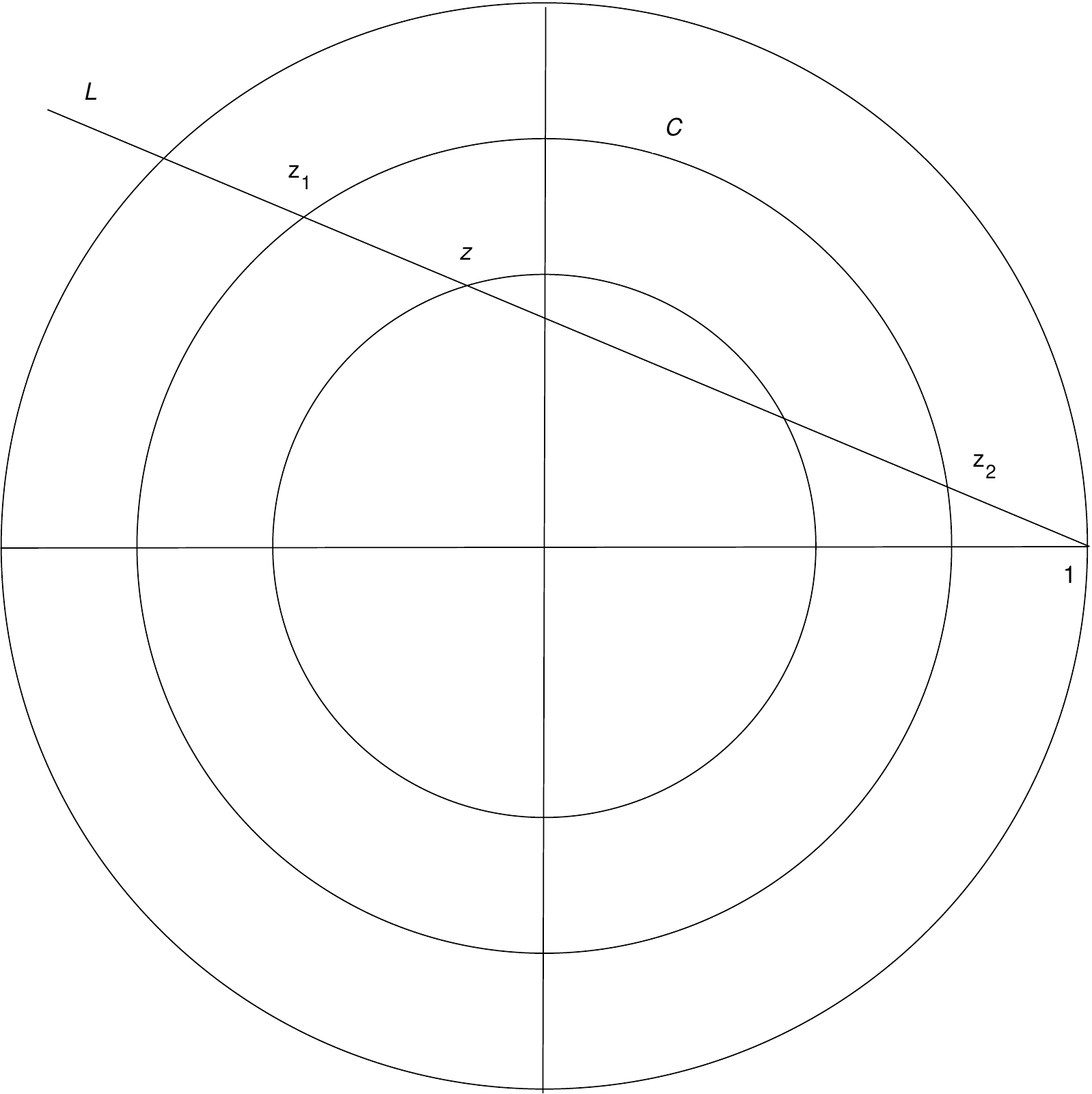} 
  \caption{$\rho' <1$}
  \label{fig:lsub1}
\end{subfigure}%
\begin{subfigure}{.5\textwidth}
  \centering
  \includegraphics[width=0.8\linewidth]{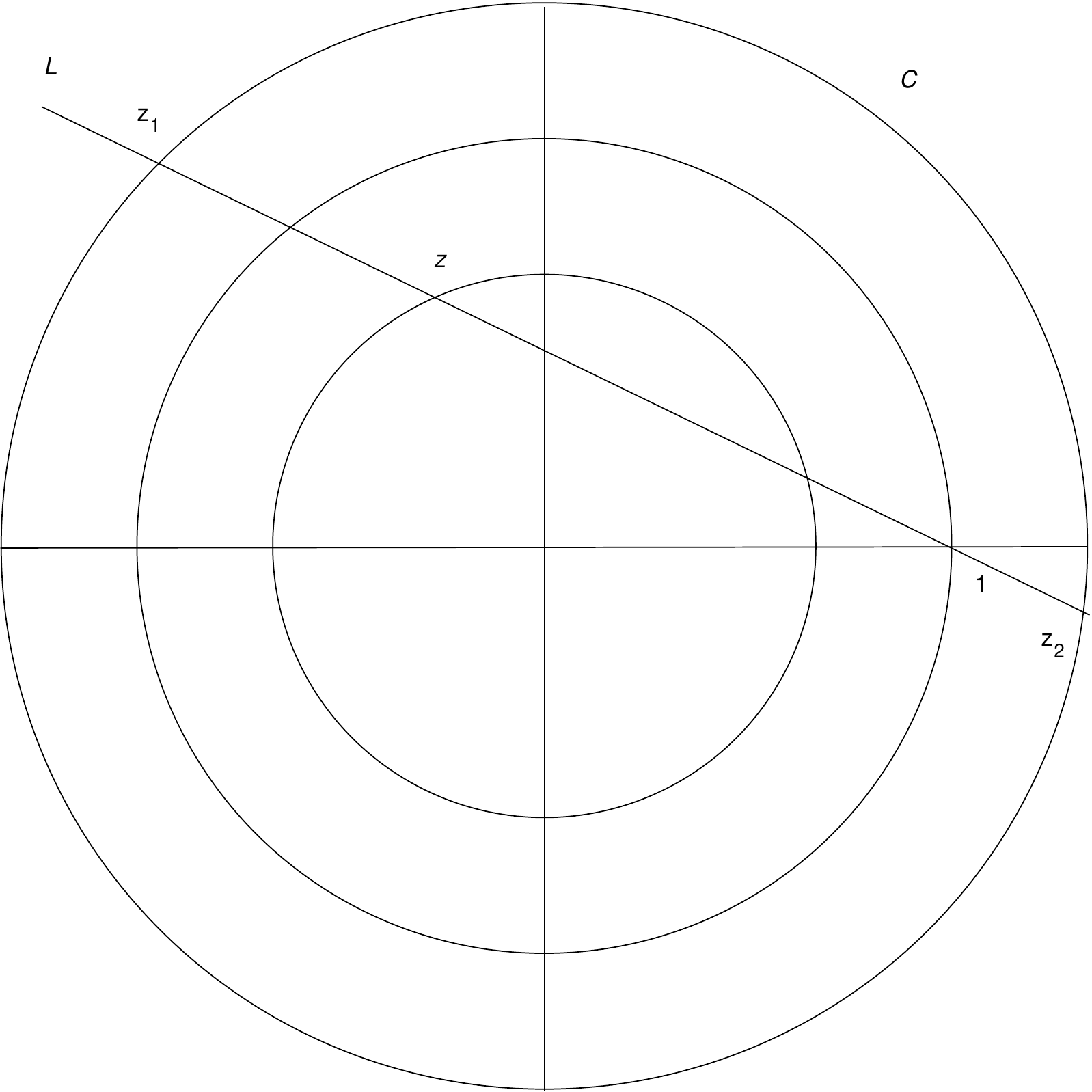} 
 \caption{$\rho' >1$}
  \label{fig:lsub2}
\end{subfigure}
\caption{\label{lfig:test} Relative placements of $1$, $z$, $z_1$ and $z_2$.}
\end{figure}



\begin{corollary}\label{cor:unique alpha}
Let $(\sfrac{p}{q},\sfrac{r}{s})$, $q \geq 2$, be a Farey pair, and let $t_0 \in \hat{\mc K}_n(q,s).$ Then $t_0$ is a root of the corresponding Ito rational function $\phi_{\alpha}$ for a single $\alpha \in (0,1)$. 
\end{corollary}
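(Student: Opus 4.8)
The plan is to obtain existence of an admissible $\alpha$ from Karpelevi\v c's theorem, and then to prove uniqueness by passing through the factorisation \eqref{eq:factorphi} and Lemma \ref{lem:powers} so as to reduce to the ``at most one'' statement of Lemma \ref{z1z2a}. Two facts will be used throughout: since $\arg(t_0)$ is defined, $t_0\neq 0$; and since, by Theorem \ref{thm:Karpelevic}, the only points of $\partial\Theta_n$ on the unit circle are the $e^{2\pi\ii p/q}$ with $\sfrac{p}{q}\in\F_n$ --- none of which has argument in the open arc $\arg(q,s)$ cut out by a Farey pair --- we have $|t_0|<1$. For existence, recall that $\hK_n(q,s)$ is a union of two arcs, each traced out by \eqref{eq:Karpelevic Poly} as $\alpha$ runs over $[0,1]$ (both arcs use the smaller denominator $q$, since $q<s$), and that for $t\neq 0$ equation \eqref{eq:Karpelevic Poly} is precisely $\phi_\alpha(t)=0$; hence $\phi_\alpha(t_0)=0$ for some $\alpha\in[0,1]$. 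The values $\alpha=0$ and $\alpha=1$ are excluded because the roots of $\phi_0(t)=(t^q-1)^d$ are $q$-th roots of unity and the nonzero roots of $\phi_1(t)=t^{qd}-t^{qd-s}$ are $s$-th roots of unity, and every such root has argument $2\pi k/m$ with $m\in\{q,s\}$ a denominator of a Farey fraction of order $n$, so none lies strictly inside $\arg(q,s)$. Thus $\alpha\in(0,1)$.

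For uniqueness, suppose $\phi_{\alpha_1}(t_0)=\phi_{\alpha_2}(t_0)=0$ with $\alpha_1,\alpha_2\in(0,1)$. By \eqref{eq:factorphi}, for each $i$ there is an index $j_i$ with $g_{\alpha_i,j_i}(t_0)=0$, and then by Lemma \ref{lem:powers} a nonzero root $w_i$ of $\hat g_{\alpha_i,j_i}$ with $w_i^{d_1}=t_0$. Rearranging $\hat g_{\alpha_i,j_i}(w_i)=0$ (using $\beta=1-\alpha_i$) I would write
\[
z=\alpha_i z_i'+(1-\alpha_i),\qquad z:=w_i^{qd_1}=t_0^{q},\quad z_i':=w_i^{qd_1-s_1}\,e^{2\pi\ii j_i/(\delta d_1)},
\]
noting that $z$ does not depend on $i$. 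With $\rho:=|t_0|<1$ one has $|z|=\rho^{q}$ and $|z_i'|=\rho^{(qd_1-s_1)/d_1}=:\rho'$, again independent of $i$. Now $q\geq 2$ together with $\gcd(d_1,s_1)=1$ rules out $qd_1=s_1$ (it would force $d_1=1$, $s_1=q$, $s=qd$, contradicting $qr-ps=1$), so $\rho'\neq 1$; and since $(qd_1-s_1)/d_1<q$ and $\rho<1$ we get $|z|=\rho^{q}<\min\{\rho',1\}$, in each of Types I, II and III. Hence Lemma \ref{z1z2a} applies and yields $z_1'=z_2'$; since $z_i'\neq 1$, the relation $z-1=\alpha_i(z_i'-1)$ forces $\alpha_1=\alpha_2$, which is the assertion.

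The main obstacle --- and the reason for routing the argument through $g_{\alpha,j}$ and then $\hat g_{\alpha,j}$ rather than attacking $\phi_\alpha$ head-on --- is that the convex-combination identity required by Lemma \ref{z1z2a} only materialises once the exponents are integers: from $\phi_\alpha(t_0)=0$ one gets $(t_0^q-\beta)^d=\alpha^d t_0^{qd-s}$, and taking $d$-th roots introduces the non-integer exponent $(qd-s)/d$, whereas $\hat g_{\alpha,j}(w_i)=0$ reads $w_i^{qd_1}=\alpha w_i^{qd_1-s_1}e^{2\pi\ii j/(\delta d_1)}+(1-\alpha)$ with the same fixed value $w_i^{qd_1}=t_0^{q}$ for every admissible $\alpha$. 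The residual work is bookkeeping: verifying $t_0\neq 0$, $|t_0|<1$ and $qd_1\neq s_1$; tracking the sign of $qd_1-s_1$ (positive for Type II, negative for Types I and III) in the modulus comparison $|z|<\min\{\rho',1\}$; and observing that no control over the indices $j_i$ is needed, since Lemma \ref{z1z2a} forces $z_1'=z_2'$ directly.
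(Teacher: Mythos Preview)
Your proof is correct and follows essentially the same route as the paper's own argument: pass from $\phi_\alpha$ to a factor $g_{\alpha,j}$, use Lemma~\ref{lem:powers} to lift $t_0$ to a root of $\hat g_{\alpha,j}$, rewrite $\hat g_{\alpha,j}(w_i)=0$ as the convex-combination identity $z=\alpha_i z_i'+(1-\alpha_i)$ with $z=t_0^q$ fixed, and then invoke Lemma~\ref{z1z2a}. Your write-up is in fact more thorough than the paper's, which simply asserts $|z|<1$ and $|z|<|z_1|=|z_2|$ without checking $\rho'\neq 1$ or treating existence; your verifications that $|t_0|<1$, that $qd_1\neq s_1$, and that the endpoint values $\alpha\in\{0,1\}$ are excluded are all sound and fill in details the paper leaves implicit.
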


\begin{proof}
Let $\phi_{\alpha_1}(t_0)=\phi_{\alpha_2}(t_0)=0$ for some $\alpha_1,\alpha_2 \in (0,1)$. Hence, $g_{\alpha_1,j_1}(t_0)=g_{\alpha_2,j_2}(t_0)=0$ for some $j_1,j_2 \in \{0,\ldots,\delta-1\}$, and 
Lemma \ref{lem:powers} tells us that $t_0=\hat t_1^{d_1}=\hat t_2^{d_1}$, where $\hat t_1$ and $\hat t_2$ are roots of $\hat g_{\alpha_1,j_1}$ and  $\hat g_{\alpha_2,j_2}$, respectively. For $z\equiv \hat t_1^{q d_1}=\hat t_2^{q d_1}$, $z_1\equiv \hat t_1^{qd_1-s_1}e^{\frac{2 \pi \ii j_1}{\delta d_1}}$ and $z_2\equiv \hat t_2^{qd_1-s_1}e^{\frac{2 \pi \ii j_2}{\delta d_1}}$, we have $|z|<1$, $|z|<|z_1|=|z_2|$ and 
$$z-(1-\alpha_k) - \alpha_k z_k=0 \text{ for }k=1,2.$$  
By Lemma \ref{z1z2a}, we now know that $z_1=z_2$ and $\alpha_1=\alpha_2$.
\end{proof}

At this point we are ready to prove the main theorem in this section, first stated as Theorem \ref{thm:gcd200}, and restated below.

\begin{theorem}\label{thm:gcd2}
Suppose that  $(\frac{p}{q},\frac{r}{s})$, $q \leq s$, is a Farey pair in $\F_n$. For $\theta \in \arg(q,s)$ the point on the boundary of $\Theta_n$ with argument $\theta$ is given by $\hat \rho^{d_1} e^{\ii \theta}$ where $\hat \rho$ is the unique solution to \eqref{eq:qs1II} for $\hat \theta = \frac{1}{d_1}\left(\theta+2 \pi l_0\right)$ and $j=j_0$. Furthermore,  $\hat \rho^{d_1} e^{\ii \theta}$ is a root of $\phi_{\alpha}$, where $\alpha$ is given by \eqref{eq:qs2II} for $\rho=\hat \rho$ and $j=j_0$. 
\end{theorem}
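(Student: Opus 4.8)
The plan is to assemble the theorem from the lemmas of this section, tracking how the active factor $g_{\alpha,j}$ behaves as $\alpha$ moves through $(0,1)$. First I would dispose of the Type~0 case ($q=1$) trivially and the Type~I case ($d=1$, so $\delta=1$) by appealing directly to Theorem~\ref{thm:Type I}; thereafter assume $q\ge 2$ and $d\ge 1$. By Lemma~\ref{lem:rho}, for the given $\theta\in\arg(q,s)$ there is a unique $\rho(\theta)$ with $\rho(\theta)e^{\ii\theta}\in\partial\Theta_n$, and by Karpelevi\v c's theorem this boundary point is a root of $\phi_{\alpha}$ for some $\alpha\in(0,1)$, hence a root of $g_{\alpha,j}$ for some $j\in\{0,\ldots,\delta-1\}$. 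By Lemma~\ref{lem:powers}, writing $\rho(\theta)=\hat\rho^{d_1}$, the number $\hat\rho\, e^{\ii\hat\tau}$ with $\hat\tau=\frac{1}{d_1}(\theta+2\pi l)$ for a suitable $l\in\{0,\ldots,d_1-1\}$ is a root of $\hat g_{\alpha,j}$, and then Lemma~\ref{lem:phi} forces $F_{\hat\tau,j}(\hat\rho)=0$ with $\alpha$ determined by \eqref{eq:qs2II}. The task is to show that necessarily $j=j_0$ and $l=l_0$, so that $\hat\tau=\hat\theta$ and $\hat\rho$ is the unique root of $F_{\hat\theta,j_0}$ produced by Lemma~\ref{lem:phi}.

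The key step is a continuity/connectedness argument in $\alpha$. Consider the arc $\mathcal{K}_n(\sfrac{p}{q},\sfrac{r}{s})$ traced out as $\alpha$ runs over $[0,1]$; by Corollary~\ref{cor:diff} the relevant root $t(\alpha)\in\partial\Theta_n$ is continuous on $[0,1]$ and differentiable on $(0,1)$, and by Corollary~\ref{cor:unique alpha} each $t_0$ in the open arc corresponds to exactly one $\alpha\in(0,1)$; moreover Theorem~\ref{thm:double} guarantees $t(\alpha)$ is a simple root of $\phi_\alpha$, hence of exactly one factor $g_{\alpha,j(\alpha)}$, for each $\alpha\in(0,1)$. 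I would argue that the index $j(\alpha)$ of the active factor is locally constant: if $t(\alpha_0)$ were a simple root of $g_{\alpha_0,j}$ only, then by continuity of the coefficients of the $g_{\alpha,j}$ and simplicity of the root, for $\alpha$ near $\alpha_0$ the perturbed root stays a root of $g_{\alpha,j}$ with the same $j$ (a root of a different factor $g_{\alpha,j'}$, $j'\ne j$, would at $\alpha=\alpha_0$ have to coincide with $t(\alpha_0)$, making $t(\alpha_0)$ a multiple root of $\phi_{\alpha_0}$, contradicting Theorem~\ref{thm:double}). Hence $j(\alpha)$ is constant on the connected set $(0,1)$. Similarly the lift index $l(\alpha)$ selecting which $d_1$-th root $\hat t(\alpha)$ of $t(\alpha)$ solves $\hat g_{\alpha,j}$ is constant, since the $d_1$ candidate lifts are separated in argument and $t(\alpha)$, hence $\hat t(\alpha)$, varies continuously. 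Taking $\alpha\to 1$ and invoking Lemma~\ref{lem:1}, which identifies the endpoint $e^{\sfrac{2\pi\ii r}{s}}$ as a root of $g_{1,j}$ precisely for $j=j_0$ and pins the corresponding lift at $e^{\frac{2\pi\ii}{d_1}(\frac{r}{s}+l_0)}$, forces $j(\alpha)\equiv j_0$ and $l(\alpha)\equiv l_0$ on all of $(0,1)$, and by continuity on $[0,1)$ as well.

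With $j=j_0$ and $l=l_0$ established, $\hat\tau=\hat\theta$ exactly as in \eqref{eq:theta}, and Lemma~\ref{lem:phi} tells us $\hat\rho$ is the \emph{unique} positive solution of $F_{\hat\theta,j_0}(\hat\rho)=0$ in $(0,1]$; this $\hat\rho$ must coincide with the one coming from the boundary point of Lemma~\ref{lem:rho}, since both produce a root $\hat\rho^{d_1}e^{\ii\theta}$ of $\phi_\alpha$ lying in $\partial\Theta_n$ and uniqueness of the boundary modulus (Lemma~\ref{lem:rho}) forbids two. The value of $\alpha$ is then given by \eqref{eq:qs2II} with $\rho=\hat\rho$, $j=j_0$, completing the proof.

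I expect the main obstacle to be making the "active factor index is locally constant" argument fully rigorous: one must rule out the possibility that, as $\alpha$ varies, the boundary root $t(\alpha)$ passes through a value where two factors $g_{\alpha,j}$ and $g_{\alpha,j'}$ share that root, and Theorem~\ref{thm:double} handles exactly this — but one must be careful that the root staying on $\partial\Theta_n$ (Karpelevi\v c) together with simplicity (Theorem~\ref{thm:double}) and the implicit function theorem genuinely pin down a single continuous branch with a well-defined, locally constant factor index and lift index. An alternative, possibly cleaner, route avoids tracking $j(\alpha)$ altogether: for the given $\theta$, directly produce the candidate $\hat\rho$ from Lemma~\ref{lem:phi} (with $j=j_0$, $\hat\theta$ as in \eqref{eq:theta}) and the associated $\alpha$ from \eqref{eq:qs2II}; this yields a genuine root $\hat\rho^{d_1}e^{\ii\theta}$ of $\phi_\alpha$, hence a point of $\Theta_n$ on the ray of argument $\theta$; one then argues it is the \emph{outermost} such point — e.g. by showing any root of any $g_{\alpha,j}$ with $j\ne j_0$ on that ray has strictly smaller modulus, again via Lemma~\ref{z1z2a} / Corollary~\ref{cor:unique alpha} together with the endpoint data of Lemma~\ref{lem:1} — and then Lemma~\ref{lem:rho} identifies it with $\partial\Theta_n$.
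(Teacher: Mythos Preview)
Your proposal is correct and uses the same ingredients as the paper in the same roles: Lemma~\ref{lem:1} to pin down $(j_0,l_0)$ at the $\alpha=1$ endpoint, then Theorem~\ref{thm:double} and Corollary~\ref{cor:unique alpha} to rule out any switch of the active factor as the parameter varies, with Lemma~\ref{lem:rho} supplying uniqueness of the boundary modulus. The paper's own argument is in fact your ``alternative, possibly cleaner, route'': it parametrises by $\theta$ rather than $\alpha$, defines the candidate $t(\theta)=\hat\rho(\theta)^{d_1}e^{\ii\theta}$ directly from the $(j_0,l_0)$-formula, and runs an infimum argument in $\theta$ (setting $\theta_0=\inf\{\tau: t(\theta)\in\partial\Theta_n\text{ for all }\theta\in(\tau,\tfrac{2\pi r}{s}]\}$ and supposing $\theta_0>\tfrac{2\pi p}{q}$), with the contradiction at the putative switching point being exactly your double-root-or-two-$\alpha$'s dichotomy.
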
 

\begin{proof} 
Let $\theta \in \arg(q,s)$. By \eqref{eq:g} and Lemma \ref{lem:powers} we know that $\rho^{d_1}e^{\ii \theta}$ is a point on the boundary of $\Theta_n$, where $\rho$ is a solution to \eqref{eq:qs1II} for  $\tau =\kappa_l(\theta)\equiv \frac{1}{d_1}(\theta+2 \pi l)$ and some choices of $j \in \{0,\ldots,\delta-1\}$ and $l \in \{0, \ldots,d_1-1\}$. 
 

For $\theta=\frac{2 \pi r}{s}$ we have already established in Lemma \ref{lem:1} that $j=j_0$ and $l=l_0$. 
We define $\hat \rho(\theta)$ to be the unique solution to \eqref{eq:qs1II} for $\tau=\kappa_{l_0}(\theta)$ and $j=j_0$. Similarly, let  $\alpha(\theta)$ be defined by \eqref{eq:qs2II} for $\tau=\kappa_{l_0}(\theta)$ and $j=j_0$. Observe that  $\alpha(\theta)$   is continuous as a function of $\theta$ on $ [\frac{2 \pi p}{q}, \frac{2 \pi r}{s}].$ With this notation in place, define $t(\theta) =  \hat \rho(\theta)^{d_1} e^{\ii \theta}$ for $\theta \in [\frac{2 \pi p}{q}, \frac{2 \pi r}{s}]$.
  We deduce that there is an $\epsilon >0$ such that for all $\theta \in (\frac{2 \pi r}{s}-\epsilon, \frac{2 \pi r}{s}], t(\theta)$ is the point on $ \partial \Theta_n$ with argument $\theta.$ Let $$\theta_0 = \inf \left \lbrace \tau \Big| \frac{2 \pi p}{q}< \tau < \frac{2 \pi r}{s} \mbox{\rm{ and  }} t(\theta) \in \partial \Theta_n  \ \ \ \forall \theta \in \left(\tau, \frac{2 \pi r}{s}\right]   \right \rbrace.$$ We claim that $\theta_0 = \frac{2 \pi p}{q},$ which is sufficient to establish the desired conclusion. 
  
  To see the claim, suppose to the contrary that $\theta_0 > \frac{2 \pi p}{q}.$ Then there is a sequence $\tau_h \in \arg(q,s)$ such that $\tau_h \rightarrow \theta_0,$ but $t(\tau_h ) \notin \partial \Theta_n.$ Consequently there is a sequence of roots $y(\tau_h) \ne t(\tau_h)$ of $\phi_{\alpha_h}$ (with parameter $\alpha_h$) of the form $y(\tau_h)=\rho_h e^{\ii \tau_h}$ such that for each $h, y_h$ is the point on the boundary of $\Theta_n$ with argument $\tau_h.$
  Since $\partial \Theta_n$ is closed, by considering  subsequence  if necessary, we may assume that   $y(\tau_h)$ and $\alpha_h$ are convergent and that  $\lim_{h \rightarrow \infty} y(\tau_h) \equiv \tilde{\rho} e^{\ii \theta_0}$ is a root of $\phi_{\tilde \alpha}(t)$, where $\tilde \alpha\equiv\lim_{h \rightarrow \infty} \alpha_h$. It follows that $\tilde{\rho} e^{\ii \theta_0}$ is on the boundary of $\Theta_n.$

  By Lemma \ref{lem:rho} we know that $\tilde{\rho}=|t(\theta_0)|,$ hence $\tilde{\rho} e^{\ii \theta_0}=t(\theta_0)$. If $\alpha(\theta_0)=\tilde \alpha$,  then $\phi_{\alpha(\theta_0)}$ has a multiple root on the boundary of $\Theta_n,$ a contradiction to Theorem \ref{thm:double}. If on the other hand, $\alpha(\theta_0)\neq\tilde \alpha$, then $t(\theta_0)$ is a root of both $\phi_{\alpha(\theta_0)}$ and $\phi_{\tilde \alpha}$, contradicting Corollary \ref{cor:unique alpha}.
%
%
	We conclude that  $\theta_0 = \frac{2 \pi p}{q},$ as claimed. 
\end{proof}


\begin{remark}\label{rmk:gcd2} 
As we saw in Example \ref{ex:q=2},  it is possible for an lto rational function to have more than one root in the sector   $\arg(q,s)$ when $gcd(s,d) \ge 2.$  Theorem \ref{thm:gcd2} resolves the ambiguity in Theorem \ref{thm:Karpelevic} as to which root in $\arg(q,s)$ is on $\partial \Theta_n.$ Theorem \ref{thm:gcd2} also naturally   reparameterises the  boundary arc in terms of the argument $\theta $.

\end{remark}

\newpage

\section{Subdominance} \label{sec:sub} 

In this section we prove that for $n \ge 2,$ each point in $\Theta_n$ is a subdominant eigenvalue of some $n \times n$ stochastic matrix. Several technical results are required in order to establish that result. 
Throughout this section, we suppose that $(\frac{p}{q}, \frac{r}{s} )$ is a Farey pair in $\F_n$, and as before assume that $0 < \frac{p}{q}, \frac{r}{s} < 1.$ 
Set $\left \lfloor \frac{n}{q}\right \rfloor = d,$ and suppose that $gcd(d,s) = \delta$. Define $s_1, d_1$ via the equations $s=s_1 \delta, d=d_1 \delta.$   

\begin{lemma}
	\label{z1z2} 
	Suppose that $0<\alpha, \rho, \rho' <1.$ There is at most one pair of complex numbers $z, z'$ such that: i) $|z|=\rho$ and  $|z'|=\rho'$; ii) $\arg(z),\arg(z') \in [0, \pi]$; and iii) $z - \alpha z' -(1-\alpha)=0.$
\end{lemma}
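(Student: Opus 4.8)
The plan is to fix $\alpha \in (0,1)$ and convert condition iii) into a statement about the intersection of two circles. Rewriting $z - \alpha z' - (1-\alpha) = 0$ as $z = (1-\alpha) + \alpha z'$, we see that as $z'$ runs over the circle $\{w : |w| = \rho'\}$, the quantity $(1-\alpha) + \alpha z'$ runs over the circle $C'$ centred at $1-\alpha$ of radius $\alpha\rho'$; thus iii) forces $z$ to lie on $C \cap C'$, where $C = \{w : |w| = \rho\}$. Conversely, once $z \in C \cap C'$ is chosen, the matching $z'$ is forced: $z' = \alpha^{-1}\bigl(z - (1-\alpha)\bigr)$, and because the affine map $\psi(w) = \alpha^{-1}w - \alpha^{-1}(1-\alpha)$ has positive real slope it maps $\R$ to $\R$ and satisfies $\mathrm{Im}(\psi(w)) = \alpha^{-1}\mathrm{Im}(w)$, so $|z'| = \rho'$ holds automatically and $\arg(z') \in [0,\pi] \iff \arg(z) \in [0,\pi]$. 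Since $z$ determines $z'$, counting admissible pairs reduces to counting points $z \in C \cap C'$ with $\arg(z) \in [0,\pi]$, and it suffices to show there is at most one such $z$.

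The next step is to note that $C \neq C'$: their centres $0$ and $1-\alpha$ are distinct because $\alpha \in (0,1)$ gives $1 - \alpha \neq 0$. Two distinct circles meet in at most two points, so if $|C \cap C'| \leq 1$ the claim is immediate. In the remaining case $C \cap C' = \{z_1, z_2\}$ with $z_1 \neq z_2$, and here the crucial observation is that both centres lie on the real axis and both circles are invariant under complex conjugation, hence so is $C \cap C'$; therefore $z_2 = \overline{z_1}$. If $z_1 \in \R$ then $z_2 = \overline{z_1} = z_1$, contradicting $z_1 \neq z_2$, so $z_1 \notin \R$; consequently exactly one of $z_1, \overline{z_1}$ lies in the open upper half-plane and the other in the open lower half-plane, and only the former can have argument in $[0,\pi]$. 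Thus at most one of the two intersection points is admissible, which proves the lemma.

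I do not expect a genuine obstacle here — this is essentially a cleaner, $\alpha$-fixed counterpart of Lemma \ref{z1z2a} — but a little care is needed with the degenerate configurations (tangent or disjoint circles, and confirming $C \neq C'$) and with verifying that the admissibility constraints on $z'$ transfer correctly to constraints on $z$ through the affine correspondence $\psi$. As an alternative, one could argue geometrically exactly as in the proof of Lemma \ref{z1z2a}: $z$ lies on the open segment joining $1$ to $z'$, and one checks that a line through $1$ cannot meet the portion of $C$ in the closed upper half-plane in two distinct points that both arise this way. I prefer the circle-intersection formulation because it isolates the conjugation symmetry, which is the mechanism actually doing the work.
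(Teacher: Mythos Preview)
Your proof is correct and takes a genuinely different route from the paper. The paper's argument is algebraic: writing $z=\rho e^{\ii\theta},\ z'=\rho' e^{\ii\theta'}$, squaring and summing the real and imaginary parts of iii) yields
\[
\cos(\theta')=\frac{\rho^2-\alpha^2\rho'^2-(1-\alpha)^2}{2\alpha(1-\alpha)\rho'},
\]
and the strict monotonicity of $\cos$ on $[0,\pi]$ then pins down $\theta'$ (hence $z'$, hence $z$). Your argument is geometric: you recast iii) as $z\in C\cap C'$ for two circles with real centres, observe that the intersection is conjugation-invariant, and conclude that at most one intersection point lies in the closed upper half-plane. Both arguments are short; the paper's has the advantage of producing an explicit formula for $\theta'$ (which is not needed here), while yours isolates the underlying symmetry and avoids any computation. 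Your handling of the affine correspondence $\psi$ --- in particular the check that $\arg(z')\in[0,\pi]\iff\arg(z)\in[0,\pi]$ via $\mathrm{Im}(\psi(w))=\alpha^{-1}\mathrm{Im}(w)$ --- is exactly what is needed to make the reduction to a single-variable count legitimate.
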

\begin{proof}
	Suppose that there is a pair of complex numbers $z, z'$ satisfying i)--iii). Write $z=\rho e^{\ii\theta}, z'=\rho' e^{\ii\theta'}, $ where $\theta, \theta' \in  [0, \pi].$ From iii) we find that $\rho \cos(\theta) = \alpha \rho' \cos(\theta') + 1-\alpha$ and $\rho \sin(\theta) = \alpha \rho' \sin(\theta').$ Squaring and summing these equations yields $$\rho^2 = \alpha^2 \rho'^2 + (1-\alpha)^2 + 2\alpha(1-\alpha)\rho'\cos(\theta').$$ Consequently we see that 
	\begin{equation}\label{cosine}
	\cos(\theta')= \frac{\rho^2-\alpha^2\rho'^2-(1-\alpha)^2}{2\alpha(1-\alpha)\rho'}.
	\end{equation}
	Since the cosine function is strictly decreasing on $[0,\pi],$ it follows that $\theta'$ is uniquely determined by (\ref{cosine}). Hence $z'$ is uniquely determined, and we then find from iii) that $z$ is also uniquely determined. 
\end{proof}

\begin{theorem}\label{lem:modulus}
Let $\phi_{\alpha}$, $\alpha \in (0,1)$, be the Ito rational function corresponding to a Farey pair $(\sfrac{p}{q},\sfrac{r}{s})$, $s>q \geq 2$.
If $t_1$ and $t_2$ are roots of $\phi_{\alpha}$ such that $|t_1|=|t_2|,$ then either $t_2=t_1$ or $t_2=\overline{t_1}.$ 
\end{theorem}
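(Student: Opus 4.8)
The plan is to reduce the problem to the factors $g_{\alpha,j}$ and then to the auxiliary polynomials $\hat g_{\alpha,j}$, where the relation $z - \alpha z' - (1-\alpha)=0$ from Lemma \ref{z1z2} becomes available. Suppose $t_1, t_2$ are roots of $\phi_\alpha$ with $|t_1|=|t_2|=:\rho^{d_1}$. Using the factorisation \eqref{eq:factorphi}, each $t_k$ is a root of some $g_{\alpha,j_k}$, and by Lemma \ref{lem:powers} we may write $t_k = \hat t_k^{d_1}$ where $\hat t_k$ is a root of $\hat g_{\alpha,j_k}$. Then $\hat g_{\alpha,j_k}(\hat t_k)=0$ reads $\hat t_k^{qd_1} - \beta - \alpha \hat t_k^{qd_1-s_1} e^{2\pi \ii j_k/(\delta d_1)} = 0$, i.e. with $w_k \equiv \hat t_k^{qd_1}$ and $w_k' \equiv \hat t_k^{qd_1-s_1} e^{2\pi \ii j_k/(\delta d_1)}$ we have $w_k - \alpha w_k' - (1-\alpha)=0$, with $|w_k| = |\hat t_k|^{qd_1}$ and $|w_k'| = |\hat t_k|^{qd_1-s_1}$. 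Since $|t_1|=|t_2|$ forces $|\hat t_1| = |\hat t_2|$, the moduli $|w_k|, |w_k'|$ agree for $k=1,2$; call them $\rho, \rho'$.

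First I would handle the degenerate cases so that Lemma \ref{z1z2} genuinely applies: if $|t_k|=0$ then both $t_k=0$ and the claim is trivial; if $|t_k|=1$, one can check directly (as in the proof of Lemma \ref{lem:multiple} / Lemma \ref{lem:1}) that the roots of $\phi_\alpha$ on the unit circle are exactly the prescribed roots of unity $e^{2\pi\ii p/q}$ and $e^{2\pi\ii r/s}$ together with their conjugates, so again the conclusion holds; and one should rule out $\rho'=1$, which by the computation in Lemma \ref{lem:multiple} would force a real $s$-th power and can be dispatched separately. So assume $0<\rho,\rho'<1$. The next step is to arrange $\arg(w_k),\arg(w_k') \in [0,\pi]$: replacing $t_k$ by $\overline{t_k}$ if necessary (which preserves being a root of $\phi_\alpha$ since its coefficients are real, and preserves $|t_k|$), I would reduce to $\arg(t_k) \in [0,\pi]$, then choose the appropriate branch/representative $\hat t_k$ among the $d_1$ preimages so that $\hat t_k$ also has argument in $[0,\pi]$; the arguments of $w_k$ and $w_k'$ then need to be brought into $[0,\pi]$ as well, which is where care is required.

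This last reduction is the main obstacle. The quantities $w_k = \hat t_k^{qd_1}$ and $w_k' = \hat t_k^{qd_1-s_1}e^{2\pi\ii j_k/(\delta d_1)}$ involve large powers of $\hat t_k$, so even if $\arg(\hat t_k)\in[0,\pi]$ there is no reason a priori for $\arg(w_k),\arg(w_k')$ to lie in $[0,\pi]$; moreover $w_k$ and $w_k'$ are not independent. The resolution I anticipate is to invoke the structural constraints already proved: Theorem \ref{thm:gcd2} pins down, for each argument $\theta$, the unique boundary point $t(\theta)$ and shows it arises from $g_{\alpha,j_0}$ with the specific choice $\tau = \kappa_{l_0}(\theta)$, and Lemma \ref{lem:sin} controls the relevant sines on the sector. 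If $t_1$ and $t_2$ are both on $\hat{\mathcal K}_n(q,s)$ this is immediate from Lemma \ref{lem:rho} (same modulus, same sector, hence equal or conjugate). For roots not on the boundary one expects to exploit that $\phi_\alpha(t)=0$ with $|t|<1$ forces $\hat t^{qd_1}$ and $\hat t^{qd_1-s_1}$ into a restricted region via the relation $w - \alpha w' = 1-\alpha$ together with $|w|<|w'|<1$, from which the convex-combination argument behind Lemma \ref{z1z2} can be rerun to conclude $w_1=w_2$, $w_1'=w_2'$, and then back-substitution through Lemma \ref{lem:powers} gives $t_1 \in \{t_2,\overline{t_2}\}$.
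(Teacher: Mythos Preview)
Your framework matches the paper's exactly: factor $\phi_\alpha$ into the $g_{\alpha,j}$, lift via Lemma~\ref{lem:powers} to roots $\hat t_k$ of $\hat g_{\alpha,j_k}$, set $z_k=\hat t_k^{qd_1}$ and $z_k'=\hat t_k^{qd_1-s_1}e^{2\pi\ii j_k/(\delta d_1)}$, and invoke Lemma~\ref{z1z2}. But the step you single out as ``the main obstacle'' is in fact a one--line observation, and your proposed workarounds (choosing a branch of $\hat t_k$, appealing to Theorem~\ref{thm:gcd2}, or restricting to boundary points) are unnecessary and would not resolve it anyway. The point is that $z_k=t_k^q$, so replacing $t_k$ by $\overline{t_k}$ (still a root of $\phi_\alpha$, same modulus) conjugates $z_k$; do this whenever $\Im(z_k)<0$. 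Then the relation $z_k=\alpha z_k'+(1-\alpha)$ itself forces $\Im(z_k')=\Im(z_k)/\alpha\ge 0$, so $\arg(z_k')\in[0,\pi]$ comes \emph{for free} from the convex--combination structure. No control of $\arg(\hat t_k)$ is needed at all.

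Second, your ``back--substitution through Lemma~\ref{lem:powers}'' glosses over a genuine arithmetic step. Knowing $z_1=z_2$ and $z_1'=z_2'$ does not immediately give $t_1=t_2$. The paper proceeds as follows: from $z_1=z_2$ one gets $t_2=t_1 e^{2\pi\ii b/q}$ for some $0\le b\le q-1$; since $\gcd(qd_1,qd_1-s_1)=1$, pick integers $x,y$ with $xqd_1+y(qd_1-s_1)=1$, so that $z_k^x z_k'^y=\hat t_k\, e^{2\pi\ii j_k y/(\delta d_1)}$, whence $t_2=t_1 e^{2\pi\ii c/\delta}$ for some $0\le c\le\delta-1$. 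Finally $\gcd(q,\delta)=1$ (because $\delta\mid s$ and $\gcd(q,s)=1$) forces $e^{2\pi\ii b/q}=e^{2\pi\ii c/\delta}=1$, hence $t_1=t_2$; undoing the earlier conjugations yields $t_1\in\{t_2,\overline{t_2}\}$ for the original roots. Your discussion of degenerate moduli is a distraction: for $\alpha\in(0,1)$ the only root of $\phi_\alpha$ on the unit circle is $1$, and $0$ is never a root, so those cases are trivial.
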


\begin{proof} 
Let $t_1$ and $t_2$ be roots of $\phi_{\alpha}$ satisfying $|t_1|=|t_2|$, and as always let $\delta=\gcd(d,s)$. From \eqref{eq:factorphi} we find that there are indices $j_1, j_2$ between $0$ and $\delta-1$ such that $g_{\alpha,j_1}(t_1)=g_{\alpha,j_2}(t_2)=0$. By Lemma \ref{lem:powers} know that $t_1=\hat t_1^{d_1}$ and $t_2=\hat t_2^{d_1}$, where $\hat g_{\alpha,j_1}(\hat t_1)=\hat g_{\alpha,j_2}(\hat t_2)=0$. Hence, for $z_1=\hat t_1^{qd_1}$, $z_1'=\hat t_1^{qd_1-s_1}e^{\frac{2 \pi \ii j_1}{\delta d_1}}$,  $z_2=\hat t_2^{qd_1}$, and $z_2'=\hat t_2^{qd_1-s_1}e^{\frac{2 \pi \ii j_2}{\delta d_1}}$ we have:
\begin{align*}
z_1-\alpha z_1'-\beta&=0,  \\
z_2-\alpha z_2'-\beta&=0. \\
\end{align*}
Without loss of generality, we assume that $\arg(z_1), \arg(z_2) \in [0, \pi]$ (otherwise we consider one or two complex conjugates), and since $z_1$ is a convex combination of $1$ and $z_1'$ then necessarily  $\arg(z_1'),\arg(z_2') \in [0, \pi]$ as well. Since $|z_1|=|z_2|$ and $|z_1'|=|z_2'|$, we can apply Lemma \ref{z1z2}, and conclude that $z_1=z_2$ and $z_1'=z_2'$. 

	Since $\hat t_1^{q d_1} =\hat t_2^{q d_1},$ we have $\hat t_2^{d_1} = \hat t_1^{d_1} e^{\frac{2 \pi \ii b}{q}}$ for some $0 \le b \le q-1.$ Also, since $\gcd(qd_1, qd_1-s_1)=1,$ there are integers $x, y$ such that $x qd_1 + y(q d_1-s_1)=1.$ Hence $z_1^x z_1'^y = \hat t_1^{q d_1 x}\hat t_1^{(q d_1 -s_1)y}e^{\frac{2 \pi \ii j_1 y}{\delta d_1}} = \hat t_1e^{\frac{2 \pi \ii j_1 y}{\delta d_1}}.$ Similarly, we find that $z_2^x z_2'^y = \hat t_2 e^{\frac{2 \pi \ii j_2 y}{\delta d_1}},$ and hence  $\hat t_2^{d_1} = \hat t_1^{d_1} e^{\frac{2 \pi \ii c}{\delta}}$ for some $0 \le c \le \delta-1.$ 
	Recalling that $\delta|s$ and $\gcd(s,q)=1,$ we find that $\gcd(\delta,q)=1.$ Since $ e^{\frac{2 \pi \ii  b}{q}} = e^{\frac{2 \pi \ii c}{\delta}},$ we see that $\frac{b}{q} - \frac{c}{\delta} \in \mathbb{Z},$ i.e. $b\delta-cq=q\delta h$ for some $h \in \mathbb{Z}.$ We find readily that $q|b$ and $\delta|c,$ and this in turn yields the fact that $\hat t_1^{d_1} = \hat t_2^{d_1},$ i.e. $t_1=t_2$. 	
\end{proof}

\begin{lemma} \label{deriv} Suppose that $(\frac{p}{q}, \frac{r}{s})$ is a Farey pair with $0< \frac{p}{q}, \frac{r}{s} < 1$  $q<s$, $\hat g_{\alpha,j}$, $j=0,\ldots,\delta-1$, as defined in \eqref{eq:hg} for this pair, and $m \in \{1,\ldots, q d_1-1\}$. 
Let $\rho(\alpha) e^{\ii \theta(\alpha)}$ denote the locus of roots of $\hat g_{\alpha,j}$ that coincides with $e^{\frac{2 \pi \ii m}{q d_1}}$ when $\alpha =0$, and let $\kappa(m,j):=\frac{2 ms_1}{qd_1} - \frac{2 j}{\delta d_1}.$  We have 
	$$\frac{d \rho}{d \alpha}\Big |_{\alpha=0} = 
	\begin{cases}
	\frac{-1}{q d_1}\left(1- \cos \left(\kappa(m,j)\pi\right)\right),  &\text{ for } 
	\kappa(m,j) \notin \mathbb{Z},\\
	 \frac{-2}{q d_1}, &\text{ for } 
	\kappa(m,j) \in \mathbb{Z}, 
	\end{cases}
	$$
	where $\frac{d \rho}{d \alpha}\Big |_{\alpha=0}$ is interpreted as the derivative from the right. 
\end{lemma}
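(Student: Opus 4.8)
\textbf{Proof proposal for Lemma \ref{deriv}.}

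The plan is to compute the derivative by implicit differentiation of the defining equation $\hat g_{\alpha,j}(t)=0$, i.e.
$$t^{qd_1}-\beta-\alpha t^{qd_1-s_1}e^{\frac{2\pi \ii j}{\delta d_1}}=0,$$
along the branch of roots that starts at $t=e^{\frac{2\pi \ii m}{qd_1}}$ when $\alpha=0$ (recall $\beta=1-\alpha$). Writing $t=t(\alpha)$ for this branch, differentiating in $\alpha$ and evaluating at $\alpha=0$ gives $qd_1\, t(0)^{qd_1-1}\, t'(0) + 1 - t(0)^{qd_1-s_1}e^{\frac{2\pi\ii j}{\delta d_1}} = 0$, since the term $-\alpha\frac{d}{d\alpha}(t^{qd_1-s_1})e^{\frac{2\pi\ii j}{\delta d_1}}$ vanishes at $\alpha=0$. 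Using $t(0)^{qd_1}=1$, so $t(0)^{-1}=t(0)^{qd_1-1}$ and $t(0)^{qd_1-s_1}=t(0)^{-s_1}=e^{-\frac{2\pi\ii m s_1}{qd_1}}$, this yields
$$t'(0) = \frac{t(0)}{qd_1}\left(e^{-\frac{2\pi \ii m s_1}{qd_1}}e^{\frac{2\pi\ii j}{\delta d_1}}-1\right) = \frac{t(0)}{qd_1}\left(e^{-\pi \ii \kappa(m,j)}-1\right),$$
by the definition $\kappa(m,j)=\frac{2ms_1}{qd_1}-\frac{2j}{\delta d_1}$. This is the computational heart of the argument; I would first record it carefully, and I would note that $t(\alpha)$ is indeed differentiable at $\alpha=0$ from the right whenever $e^{\frac{2\pi\ii m}{qd_1}}$ is a simple root of $\hat g_{0,j}(t)=t^{qd_1}-1$, which holds since $1\le m\le qd_1-1$.

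Next I would convert the derivative of $t$ into the derivative of $\rho=|t|$. Since $\rho(\alpha)^2 = t(\alpha)\overline{t(\alpha)}$, we get $\rho'(0) = \mathrm{Re}\left(\overline{t(0)}\,t'(0)\right)/\rho(0) = \mathrm{Re}\left(\overline{t(0)}\,t'(0)\right)$ because $\rho(0)=1$. Substituting the formula for $t'(0)$ above, the factor $\overline{t(0)}\,t(0)=1$ drops out and
$$\rho'(0) = \frac{1}{qd_1}\,\mathrm{Re}\!\left(e^{-\pi\ii\kappa(m,j)}-1\right) = \frac{1}{qd_1}\left(\cos(\pi\kappa(m,j))-1\right) = \frac{-1}{qd_1}\left(1-\cos(\pi\kappa(m,j))\right).$$
This already gives the first case. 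For the second case, when $\kappa(m,j)\in\mathbb{Z}$, one has to be slightly more careful: if $\kappa(m,j)$ is an even integer then $\cos(\pi\kappa(m,j))=1$ and the formula above would give $\rho'(0)=0$, which does not match the claimed value $-2/(qd_1)$, so I expect that the hypotheses force $\kappa(m,j)$ to be an \emph{odd} integer in this degenerate case (equivalently $e^{-\pi\ii\kappa(m,j)}=-1$), and then $\cos(\pi\kappa(m,j))=-1$ gives $\rho'(0)=-2/(qd_1)$ in agreement with the statement. Thus the two cases of the conclusion both come out of the single expression $\rho'(0)=\frac{1}{qd_1}(\cos(\pi\kappa(m,j))-1)$, with the second case being the instance where $\cos(\pi\kappa(m,j))=-1$; the reason the paper singles it out is presumably that there $e^{\frac{2\pi\ii m}{qd_1}}$ coincides with $e^{\frac{2\pi\ii r}{s}}$ or some other distinguished root and one wants the sharper constant on record for later use.

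The main obstacle I anticipate is precisely this: justifying that when $\kappa(m,j)$ is an integer it must be \emph{odd} under the running hypotheses (Farey pair with $0<p/q,r/s<1$, $q<s$, $m\in\{1,\dots,qd_1-1\}$, $j\in\{0,\dots,\delta-1\}$), rather than even. I would attack this by unwinding $\kappa(m,j)\in\mathbb{Z}$ into the divisibility statement $\delta d_1 \mid (2ms_1\delta - 2jqd_1)/(?)$ — more precisely, clearing denominators, $\kappa(m,j)=\frac{2(ms_1\delta - jqd_1)}{\delta q d_1}$, so $\kappa(m,j)\in\mathbb{Z}$ means $\delta q d_1 \mid 2(m s_1 \delta - j q d_1)$, and using $\gcd(s_1,d_1)=1$, $\gcd(\delta,q)=1$, $\gcd(q,s)=1$ to pin down $m$ and $j$, then checking the parity of $ms_1\delta-jqd_1$ relative to $\delta q d_1/2$. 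A secondary (minor) obstacle is confirming the right-differentiability of the root branch at the boundary point $\alpha=0$; this follows from the implicit function theorem exactly as in Corollary \ref{cor:diff}, since $e^{\frac{2\pi\ii m}{qd_1}}$ is a simple root of $t^{qd_1}-1$ and hence of $\hat g_{0,j}$, so $\partial_t \hat g_{0,j}$ does not vanish there. Everything else is the routine calculus of implicit differentiation, which I would present compactly.
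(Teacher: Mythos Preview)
Your direct complex-differentiation approach is correct and in fact cleaner than the paper's. The paper instead works with the real system \eqref{eq:qs1II}--\eqref{eq:qs2II}: it computes $\frac{d\rho}{d\theta}\big|_{\theta=2\pi m/(qd_1)}$ and $\frac{d\theta}{d\alpha}\big|_{\alpha=0}$ separately by implicit differentiation of those two equations, then multiplies. That chain-rule route is why the paper must split off the case $\kappa(m,j)\in\mathbb Z$: there $\sin(\pi\kappa)=0$, so both factors degenerate (one is $0/0$), and the paper handles it by observing that the root is then real negative and differentiating the one-variable real equation $\rho^{qd_1}+\alpha\rho^{qd_1-s_1}-(1-\alpha)=0$ directly. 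Your method bypasses this entirely, since the complex implicit function theorem applies uniformly (every root of $t^{qd_1}-1$ is simple), and your single formula $\rho'(0)=\frac{1}{qd_1}(\cos(\pi\kappa)-1)$ covers both cases at once.

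The one genuine gap you correctly flag is the parity of $\kappa(m,j)$ when it is an integer. Your sketch is on the right track but has a slip: clearing denominators in $\kappa=a\in\mathbb Z$ gives $2ms_1\delta-2jq=aq\delta d_1$ (not $2jqd_1$). The paper's argument then runs: from $\gcd(\delta,q)=1$ one gets $\delta\mid 2j$, forcing $j=0$ or ($\delta$ even and $j=\delta/2$). If $j=0$ then $qd_1\mid 2m$ with $1\le m\le qd_1-1$ forces $2m=qd_1$, whence $a=s_1$; since $qd_1$ is even and $\gcd(qd_1,s_1)=1$, $s_1$ is odd. If $j=\delta/2$ then $2ms_1-q=aqd_1$; since $\delta$ even gives $q$ odd, the left side is $\text{even}-\text{odd}=\text{odd}$, so $ad_1$ is odd, hence $a$ is odd. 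Either way $\kappa=a$ is odd, and your formula yields $-2/(qd_1)$ as required. Your guess about \emph{why} the case is singled out (a distinguished root like $e^{2\pi\ii r/s}$) is off; it is singled out purely because the paper's $\theta$-parameterisation breaks down there, a difficulty your approach does not encounter.
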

\begin{proof} 
Recall that $\rho, \theta$ and $\alpha$ satisfy \eqref{eq:qs1II} and \eqref{eq:qs2II}. 
	Implicitly differentiating \eqref{eq:qs1II} and \eqref{eq:qs2II}, we find that 
	\begin{align*}
	 &\frac{d \rho}{d \theta}\Big |_{\theta =\frac{2 \pi m}{qd_1} }\sin (\pi  \kappa(m,j) )=1-\cos (\pi  \kappa(m,j) ) \\
	 &\frac{d \theta}{d \alpha}\Big |_{\alpha=0}d_1 q=-\sin (\pi  \kappa(m,j) ),
	\end{align*}
and hence 
      $$\frac{d \rho}{d \alpha}\Big |_{\alpha=0}\sin (\pi  \kappa(m,j) )d_1 q=-\sin (\pi  \kappa(m,j) )(1-\cos (\pi  \kappa(m,j) ) ).$$
If $\kappa(m,j) \notin \mathbb{Z},$ so that 
	$\sin(\pi \kappa(m,j) ) \ne 0$,  the desired expression for $\frac{d \rho}{d \alpha}\Big |_{\alpha=0}$ follows.

Next we consider the case that  $a:= \kappa(m,j)  \in \mathbb{Z}.$  From $2ms_1\delta-2jq=aq\delta d_1$ and   $\gcd(\delta,q)=1,$ we find that $\delta|2j;$ since $0 \le 2j \le 2\delta-2,$ it must be the case that either $j=0$ or $\delta$ is even and $j=\frac{\delta}{2}.$

If $j=0,$ then then $qd_1|2ms_1,$ and since both $q$ and $d_1$ are relatively prime to $s_1,$ we find that $qd_1|2m.$ From the hypothesis that $1 \le m \le qd_1-1,$ it follows that $qd_1=2m,$ so that at $\alpha=0,$ our root $ e^{\frac{2\pi \ii m}{qd_1}}=-1.$ Since $qd_1 $ is even, $s_1$ must be odd and so for $\alpha$ in a neighbourhood of $0$,  our root is of the form $-\rho,$ where $\rho$ is real, positive and solves $\rho^{qd_1}+\alpha \rho^{qd_1-s_1}-(1-\alpha)=0$. Differentiating with respect to $\alpha$ now yields $\frac{d \rho}{d \alpha}\Big |_{\alpha=0} = \frac{-2}{q d_1}.$ 
	
	Finally, suppose that  $\delta$ is even and $j=\frac{\delta}{2}.$ Then for some integer $a$ we have $2ms_1-q=aqd_1.$ Since $\delta$ is even, $q$ and $d_1$ are odd, and we deduce that $a$ is also odd. In this case, our polynomial $\hat g_{\alpha,j}(t)$ simplifies to $t^{q d_1} - (1-\alpha) - \alpha t^{q d_1 -s_1}e^{\frac{ \pi \ii}{d_1}}.$ It follows that $\hat g_{\alpha,j}$ has a root of the form $\rho e^{\frac{2 \pi \ii m}{q d_1}}$ (where $\rho$ is real and positive) if and only if $\rho^{q d_1}-\alpha \rho^{qd_1 -s_1}  e^{\frac{-2 \pi \ii ms_1}{q d_1} + \frac{\pi \ii}{d_1}} -(1-\alpha)=0.$ Since $2ms_1-q=aqd_1,$ this last reduces to $\rho^{q d_1}+\alpha \rho^{qd_1-s_1}-(1-\alpha)=0$ (which has a unique positive solution in $(0,1]$ for all $\alpha \in [0,1)$). Differentiating as before, it follows that $\frac{d \rho}{d \alpha}\Big |_{\alpha=0} = \frac{-2}{q d_1}.$ 
%
%
%
%
%
\end{proof}

In the following, we maintain the notation and terminology of Lemma \ref{deriv}. 

\begin{corollary}\label{II_der} 
	We have 
	\begin{equation}\label{II_bd}
	\frac{d \rho}{d \alpha}\Big |_{\alpha=0} \le \frac{-1}{q d_1}\left(1-\cos \left(\frac{2 \pi}{q\delta d_1}\right)\right). 
	\end{equation}
	Further, if equality holds in (\ref{II_bd}), then 
	there is an $1 \ge \alpha_0>0$ such that for all $\alpha \in [0, \alpha_0),$ we have 
	either:
	\begin{enumerate}
	 \item 
	 $(\rho(\alpha)e^{\ii\theta(\alpha)})^{d_1} = (\hat{\rho}(\alpha)e^{\ii\hat{\theta}(\alpha)})^{d_1},$ where  $\hat{\rho}(\alpha)e^{\ii\hat{\theta}(\alpha)}, \alpha \in [0, \alpha_0)$ is  the locus of roots of $\hat g_{\alpha,j_0}$ that equals $e^{\frac{2 \pi \ii (p+q l_0)}{q d_1}}$ when $\alpha=0,$ or
	\item  $(\rho(\alpha)e^{\ii\theta(\alpha)})^{d_1} = \overline{(\hat{\rho}(\alpha)e^{\ii\hat{\theta}(\alpha)})^{d_1}},$
	where 
	 $\hat{\rho}(\alpha)e^{\ii\hat{\theta}(\alpha)}, \alpha \in [0, \alpha_0)$ is as in 1.  
	 \end{enumerate}
	 In particular,   if equality holds in (\ref{II_bd}), then 
	 $(\rho(\alpha)e^{\ii\theta(\alpha)})^{d_1} \in \hat{\mc K}_n(q,s) $ when $\alpha \in (0, \alpha_0).$
\end{corollary}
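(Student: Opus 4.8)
The plan is to analyze the derivative formula from Lemma \ref{deriv} and extract from it the inequality \eqref{II_bd}, then treat the equality case carefully. First I would observe that the relevant quantity is $\frac{d\rho}{d\alpha}|_{\alpha=0}$ for the locus of roots of $g_{\alpha,j}$ attached to the boundary point $(\rho(\alpha)e^{\ii\theta(\alpha)})^{d_1}$; by Lemma \ref{lem:powers} this locus lifts to a root of some $\hat g_{\alpha,j}$ emanating from $e^{\frac{2\pi\ii m}{qd_1}}$ for some $m\in\{1,\dots,qd_1-1\}$ and some $j\in\{0,\dots,\delta-1\}$. Lemma \ref{deriv} gives the two cases $\frac{-1}{qd_1}(1-\cos(\pi\kappa(m,j)))$ when $\kappa(m,j)\notin\mathbb Z$ and $\frac{-2}{qd_1}$ when $\kappa(m,j)\in\mathbb Z$. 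In the integer case $\frac{-2}{qd_1}\le\frac{-1}{qd_1}(1-\cos(\frac{2\pi}{q\delta d_1}))$ holds trivially since $1-\cos\le 2$, so \eqref{II_bd} follows; in the non-integer case I must show $1-\cos(\pi\kappa(m,j))\ge 1-\cos(\frac{2\pi}{q\delta d_1})$, equivalently $\cos(\pi\kappa(m,j))\le\cos(\frac{2\pi}{q\delta d_1})$, which amounts to showing the distance of $\pi\kappa(m,j)$ to the nearest multiple of $2\pi$ is at least $\frac{2\pi}{q\delta d_1}$, i.e. $\kappa(m,j)$ is never closer to an even integer than $\frac{2}{q\delta d_1}$. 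Since $\kappa(m,j)=\frac{2ms_1}{qd_1}-\frac{2j}{\delta d_1}=\frac{2(ms_1\delta - jq)}{q\delta d_1}$ has denominator $q\delta d_1$ when written over that common denominator, its distance to any even integer $2k$ is $\frac{|2(ms_1\delta-jq)-2kq\delta d_1|}{q\delta d_1}$, and the numerator is a nonzero even integer (nonzero precisely because $\kappa(m,j)\notin\mathbb Z$), so the distance is at least $\frac{2}{q\delta d_1}$. This establishes \eqref{II_bd}.

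Next I would handle the equality case. Equality in \eqref{II_bd} with $\kappa(m,j)\notin\mathbb Z$ forces the distance of $\kappa(m,j)$ to the nearest even integer to equal exactly $\frac{2}{q\delta d_1}$, i.e. $ms_1\delta-jq \equiv \pm 1 \pmod{q\delta d_1}$. I would then identify, using the characterisation of Farey pairs ($qr-ps=1$) together with Lemma \ref{lem:1} (which pins down $j_0$, $l_0$, and the root $e^{\frac{2\pi\ii}{d_1}(\frac rs+l_0)}$ of $\hat g_{1,j_0}$), that the value $m=p+ql_0$ with $j=j_0$ is exactly the index for which $\kappa(m,j_0)$ achieves this extremal distance — this is the root locus that terminates at $e^{\frac{2\pi\ii r}{s}}$ when $\alpha=1$, i.e. the boundary locus. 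So in the equality case the locus $\rho(\alpha)e^{\ii\theta(\alpha)}$ must coincide (after raising to the $d_1$ power, up to the $q$-fold and $\delta$-fold ambiguities handled exactly as in the proof of Theorem \ref{lem:modulus}) with $\hat\rho(\alpha)e^{\ii\hat\theta(\alpha)}$ or its conjugate, giving alternatives 1 and 2; the argument that the two roots with equal modulus and equal initial derivative must be related this way is a local/analytic-continuation version of the modulus argument, using that roots are differentiable in $\alpha$ (Corollary \ref{cor:diff}) and that the relevant polynomials have distinct roots for small $\alpha>0$.

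Finally, the ``in particular'' clause: once we know $(\rho(\alpha)e^{\ii\theta(\alpha)})^{d_1}$ equals $(\hat\rho(\alpha)e^{\ii\hat\theta(\alpha)})^{d_1}$ or its conjugate, and we know by Theorem \ref{thm:gcd2} that the locus of $\hat g_{\alpha,j_0}$ starting at $e^{\frac{2\pi\ii(p+ql_0)}{qd_1}}$ yields (after the $d_1$-th power) precisely the boundary point of $\Theta_n$ with argument in $\arg(q,s)$, it follows immediately that $(\rho(\alpha)e^{\ii\theta(\alpha)})^{d_1}\in\partial\Theta_n$; and since its argument lies in $\arg(q,s)$ for $\alpha\in(0,\alpha_0)$ (the argument moves off $\frac{2\pi p}{q}$ or its conjugate position as soon as $\alpha>0$, by the nonzero derivative of $\theta$ computed in Lemma \ref{deriv}), we get $(\rho(\alpha)e^{\ii\theta(\alpha)})^{d_1}\in\hat{\mc K}_n(q,s)$ for $\alpha\in(0,\alpha_0)$, shrinking $\alpha_0$ if necessary.

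The main obstacle I expect is the bookkeeping in the equality case: correctly matching the extremal index $m$ (and $j=j_0$) to the distinguished boundary locus identified in Lemma \ref{lem:1}, and making rigorous the claim that two root-loci with the same modulus and the same first-order behaviour at $\alpha=0$ must be globally identified (up to conjugation) on a small interval — this requires combining the local modulus-equality argument of Theorem \ref{lem:modulus} with an analyticity/continuation argument rather than a purely pointwise one, and care is needed to rule out the loci separating at higher order.
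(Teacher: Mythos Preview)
Your treatment of the inequality \eqref{II_bd} is correct and essentially identical to the paper's: write $\kappa(m,j)=\frac{2(ms_1\delta-jq)}{q\delta d_1}$, observe the numerator is a nonzero even integer when $\kappa\notin\mathbb Z$, and conclude the distance to the nearest even integer is at least $\frac{2}{q\delta d_1}$.

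The gap is in the equality case. You correctly arrive at the condition $ms_1\delta-jq\equiv\pm1\pmod{q\delta d_1}$ and correctly identify $(m,j)=(p+ql_0,j_0)$ as a solution for the $-1$ sign. What you miss is that this congruence has a \emph{unique} solution $(m,j)$ in the range $1\le m\le qd_1-1$, $0\le j\le\delta-1$ for each sign. The paper establishes this by writing $m=m_1+m_2q$ and reducing the congruence modulo $q$ (forcing $m_1=p$), modulo $\delta$ (forcing $j=j_0$), and then modulo $d_1$ (forcing $m_2=l_0$, using $\gcd(s_1,d_1)=1$). The $+1$ case is handled symmetrically via the conjugate Farey pair $(\frac{q-p}{q},\frac{s-r}{s})$, yielding the unique solution $(m,j)=(p'+ql_0',j_0')$.

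Once uniqueness is in hand, there is nothing left to prove: the locus $\rho(\alpha)e^{\ii\theta(\alpha)}$ is \emph{by hypothesis} the locus of roots of $\hat g_{\alpha,j}$ emanating from $e^{2\pi\ii m/(qd_1)}$, so if $(m,j)=(p+ql_0,j_0)$ then this locus \emph{is} $\hat\rho(\alpha)e^{\ii\hat\theta(\alpha)}$ by definition---they are the same object, not two loci to be compared. Your anticipated obstacle (``two root-loci with the same modulus and the same first-order behaviour at $\alpha=0$ must be globally identified\ldots care is needed to rule out the loci separating at higher order'') is therefore a phantom: no modulus argument, no analytic continuation, and no appeal to Theorem~\ref{lem:modulus} is needed here. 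The invocation of Lemma~\ref{lem:1} is also misplaced---that lemma concerns $\alpha=1$, whereas what is needed is the direct solution of the congruence at $\alpha=0$.
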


\begin{proof}
	Evidently (\ref{II_bd}) holds if $\kappa(m,j) \in \mathbb{Z},$ so henceforth we focus on the case that $\kappa(m,j) \notin \mathbb{Z}.$ Suppose that $2ms_1\delta-2jq \equiv x \mod 2q\delta d_1,$ and notice that necessarily $x$ must be even and nonzero. It now follows that
	$$\cos \left(\kappa(m,j)\pi\right) = \cos \left(\frac{\pi x}{q\delta d_1} \right) \le  \cos \left(\frac{2 \pi}{q\delta d_1}\right),$$
 and inequality (\ref{II_bd}) follows. 
Inspecting the argument above, we find that equality holds in (\ref{II_bd}) if and only if $x=2$ or $x=2q\delta d_1-2$, i.e. if and only if  $ms_1\delta-jq \equiv \pm 1 \mod q\delta d_1.$ We now proceed to characterise that situation.

Since $(\frac{p}{q}, \frac{r}{s})$ is a Farey pair, we have $qr-ps=1$, and we find that $ms_1\delta-jq \equiv \pm (rq-ps) \mod q\delta d_1$.
Write $m=m_1+m_2q,$ where $0 \le m_1 \le q-1$ and $0 \le m_2 \le d_1-1.$ We first consider the case 
\begin{equation}\label{eq:case1}
m_1s_1\delta+m_2 q s_1\delta-jq \equiv - (rq-ps) \mod q\delta d_1.
\end{equation} 
Considering \eqref{eq:case1} modulo $q$ and taking into the account that $\gcd(q,s)=1$, we find that $m_1 \equiv p \mod q$, hence $m_1=p$. Similarly, considering \eqref{eq:case1} modulo $\delta$ together with $\gcd(q,\delta)=1$ implies $j \equiv r \mod \delta$, hence $j=j_0$ as defined in Definition \ref{def:deltaetc}. Now \eqref{eq:case1} reduces to $m_2s_1 \equiv -r_2 \mod d_1$. Since $s_1$ and $d_1$ are relatively prime and $0 \le m_2 \le d_1-1$, it follows that there is a unique such $m_2,$ namely $m_2=l_0.$ We deduce that for some $\alpha_1>0,$ for $\alpha \in [0, \alpha_1), \rho(\alpha)e^{\ii \theta(\alpha)}$ is the continuous locus of roots of $\hat{g}_{\alpha,j_0}$ that coincides with $e^{\frac{2 \pi \ii (p+ql_0)}{qd_1}}$ at $\alpha=0.$


	Consider the interval $[\frac{2 \pi p}{qd_1}+ \frac{2 \pi l_0}{d_1},  \frac{2 \pi r}{sd_1}+ \frac{2 \pi l_0}{d_1}],$ and note that if $\hat{\theta}$ lies in this interval, we find from Lemma \ref{lem:phi} that there is a unique $\hat{\rho}>0$ and a corresponding value $\alpha \in [0,1]$ such that $\hat{\rho} e^{\ii \hat{\theta}}$ is a root of $\hat{g}_{\alpha,j_0}$, and $(\hat{\rho} e^{\ii \hat{\theta}})^{d_1} \in  \hat{\mc K}_n(q,s).$ Further, from (\ref{eq:qs1II}) and  (\ref{eq:qs2II}), it follows that for some $\alpha_0 \in (0,\alpha_1),$ $\hat{\rho}$ and $\hat{\theta}$ can be written as continuous functions of $\alpha \in [0,\alpha_0 ].$ It now follows that for $\alpha \in [0,\alpha_0 ],$ 
 we have 
	$\rho(\alpha)e^{\ii\theta(\alpha)} ={ \hat \rho(\alpha)}e^{\ii {\hat \theta(\alpha)}},$ in accordance with 1). 


	In the case that  $ms_1\delta-jp \equiv  1 \mod q\delta d_1,$ set $p'=q-p, r'=s-r,$ and observe that  $ms_1\delta-jp \equiv p's-r'q \mod q\delta d_1.$ Arguing as above, it follows that $j=j_0',$ where  $r' = \delta \lfloor \frac{r'}{\delta}\rfloor + j_0'$ and $m= p' + l_0'q,$ where  $l_0'$ satisfies $ \lfloor \frac{r'}{\delta}\rfloor = d_1 \hat{r'} - l_0's_1,$ with $0 \le l_0' \le d_1-1, 0 \le \hat{r'} \le s_1-1.$ 
	As above we find that for some $\alpha_1>0, $  when $\alpha \in [0, \alpha_1), \rho(\alpha)e^{\ii \theta(\alpha)}$ is the continuous locus of roots of $\hat{g}_{\alpha,j_0'}$ that coincides with $e^{\frac{2 \pi \ii (q-p+ql_0')}{qd_1}}$ at $\alpha=0.$ Further, we find that for some $\alpha_0 \in (0, \alpha_1),$ when for $\alpha \in [0,\alpha_0 ],$ we have 
	$\rho(\alpha)e^{\ii\theta(\alpha)} =\hat{ \rho}(\alpha)e^{\ii \hat{\theta}(\alpha)},$ 
	where $\hat{\rho}, \hat{\theta}$ and $\alpha$ satisfy  (\ref{eq:qs1II}),   (\ref{eq:qs2II}) and (\ref{eq:theta}), when $j=j_0', l_0=l_0'$. The conclusion for ii) now follows. 
\end{proof}

Recall that for an Ito rational function $\phi_\alpha, \alpha \in [0,1],$ we always have $1$ as a root of maximum modulus. We say that a root $z$ of  $\phi_\alpha$ is subdominant if its modulus is next largest after $1$. 

\begin{theorem}\label{typeIIthm} 
	Let $(\frac{p}{q}, \frac{r}{s})$ be a Farey pair in $\F_n$ with the corresponding Ito rational function $\phi_{\alpha}$, and assume that $0< \frac{p}{q}, \frac{r}{s} <1,$ and $q<s.$   
For $\theta \in \arg(q,s)$ let $ \rho e^{\ii \theta}$ be the root of $\phi_{\alpha(\theta)}$ that is on the boundary of $\Theta_n$.
		Then $ \rho e^{\ii \theta}$
		is a subdominant root of $\phi_{\alpha(\theta)}.$ 
\end{theorem}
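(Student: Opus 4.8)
The plan is to run a connectedness argument in the argument $\theta$. By the conjugation symmetry of $\Theta_n$ (Lemma~\ref{lem:conjugate pairs}) it is enough to handle $\theta\in I:=(\tfrac{2\pi p}{q},\tfrac{2\pi r}{s})$, the conjugate sector being obtained by replacing every quantity with its complex conjugate. I will write $t(\theta)$ and $\alpha(\theta)$ for the boundary point and the parameter furnished by Theorem~\ref{thm:gcd2}; both are continuous in $\theta$, with $t(\tfrac{2\pi p}{q})=e^{2\pi\ii p/q}$ and $\alpha(\tfrac{2\pi p}{q})=0$ (the latter because the right-hand side of \eqref{eq:qs2II} vanishes at $\theta=\tfrac{2\pi p}{q}$ by Lemma~\ref{lem:sin}, while its coefficient of $\alpha$ is positive there, at least when $q\ge 3$ or $d\ge 2$). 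I will also use that for $\alpha\in(0,1)$ every root of $\phi_\alpha$ lies in the closed unit disc with $1$ the only root on the unit circle (since $|t^q-\beta|\ge 1-\beta$ when $|t|=1$), that $1$ is a simple root, and that $|t(\theta)|<1$ for $\theta\in I$ (no root of unity has argument strictly inside $I$); hence the second-largest modulus among roots of $\phi_{\alpha(\theta)}$ is a continuous function of $\theta$ that is always $<1$.

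\textbf{Base case ($\theta$ near $\tfrac{2\pi p}{q}$).} Here I would argue at $\alpha=0$. The root $1$ keeps modulus $1$, while by Lemma~\ref{deriv} every root of $\phi_\alpha$ tending to a $q$-th root of unity other than $1$ has strictly decreasing modulus at $\alpha=0$ (the rest, present only for Type~III, tend to $0$). Since by Lemma~\ref{lem:powers} the modulus of such a root is $\rho^{d_1}$ for a root-modulus $\rho$ of some $\hat g_{\alpha,j}$, Corollary~\ref{II_der} (plus an elementary variant of Lemma~\ref{deriv} for the loci tending to $1$ in the factors $\hat g_{\alpha,j}$, $j\ne 0$) shows that the modulus decays least steeply, and with a common derivative, exactly along the boundary locus and its conjugate, every other locus having a strictly smaller derivative at $\alpha=0$. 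Comparing first-order expansions I then get $\varepsilon>0$ so that for $\alpha\in(0,\varepsilon)$ the boundary locus — which by Corollary~\ref{II_der} and Lemma~\ref{lem:rho} equals $t(\theta)$ for the corresponding $\theta$ — has the largest modulus among the non-trivial roots of $\phi_\alpha$, tied only with its conjugate; so $t(\theta)$ is subdominant for all $\theta$ in a right-neighbourhood of $\tfrac{2\pi p}{q}$. The case excluded above, $q=2$ with $d=1$, forces $n=3$, where $\phi_\alpha$ has only the root $1$ and a single conjugate pair of non-trivial roots, so subdominance is immediate.

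\textbf{Propagation.} Let $S=\{\theta\in I: t(\theta)\text{ is subdominant for }\phi_{\alpha(\theta)}\}$; it is nonempty by the base case, and I would show it is open and closed in $I$. For openness, take $\theta_0\in S$: as $\pi\notin I$ (otherwise $\tfrac12$ would be a Farey fraction strictly between $\tfrac pq$ and $\tfrac rs$), $t(\theta_0)\ne\overline{t(\theta_0)}$, so by Theorem~\ref{lem:modulus} the only roots of $\phi_{\alpha(\theta_0)}$ of modulus $|t(\theta_0)|$ are $t(\theta_0)$ and $\overline{t(\theta_0)}$, all others of strictly smaller modulus; these two are simple roots by Theorem~\ref{thm:double}, so for $\theta$ near $\theta_0$ the roots of $\phi_{\alpha(\theta)}$ near them are exactly $t(\theta)$ and $\overline{t(\theta)}$, while the remaining roots stay of modulus bounded below $|t(\theta)|$; thus $\theta\in S$. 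For closedness, if $\theta_h\to\theta_0\in I$ with $\theta_h\in S$, then the second-largest modulus of $\phi_{\alpha(\theta)}$ and $|t(\theta)|$ are continuous and agree on $S$, hence agree at $\theta_0$, and since $|t(\theta_0)|<1$ this makes $t(\theta_0)$ subdominant. As $I$ is connected, $S=I$; conjugation then gives the statement for all $\theta\in\arg(q,s)$.

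\textbf{Where the difficulty lies.} I expect the base case to be the crux: extracting from Corollary~\ref{II_der} that the boundary locus, together only with its conjugate, is the unique non-trivial locus of slowest modulus decay at $\alpha=0$, and then upgrading this infinitesimal fact to a strict modulus inequality among all roots simultaneously on an interval $\alpha\in(0,\varepsilon)$ — which is precisely why one must know the extremal derivative is shared only by a conjugate pair — all while tracking the Type~III roots running into the origin and the degenerate small denominators. The propagation step should be routine, using only the simplicity of the boundary root (Theorem~\ref{thm:double}), the ``equal modulus $\Rightarrow$ equal or conjugate'' dichotomy (Theorem~\ref{lem:modulus}), and continuity of the roots and of the parametrisation.
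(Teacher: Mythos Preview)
Your proposal is correct and follows essentially the same route as the paper: a base case near $\theta=\tfrac{2\pi p}{q}$ established via the derivative computation in Lemma~\ref{deriv}/Corollary~\ref{II_der}, followed by a connectedness argument in $\theta$ that pivots on Theorem~\ref{lem:modulus} (equal modulus $\Rightarrow$ equal or conjugate) and Theorem~\ref{thm:double} (no multiple root on $\partial\Theta_n$). The only cosmetic difference is that the paper runs the propagation as a supremum-and-contradiction argument on $\theta_0=\sup\{\theta: t(\nu)\text{ subdominant on }[\tfrac{2\pi p}{q},\theta)\}$, working with the quotient $\tilde\phi_{\alpha(\theta)}(t)=\phi_{\alpha(\theta)}(t)/\bigl((t-t(\theta))(t-\overline{t(\theta)})(t-1)\bigr)$, rather than your explicit open--and--closed decomposition; the two formulations are interchangeable.
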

\begin{proof} 
	By Theorem \ref{thm:gcd2} we know that $\rho=\hat \rho^{d_1}$, where  $\hat \rho$ is the unique solution to \eqref{eq:qs1II} for $\hat \theta = \frac{\theta+2\pi l_0}{d_1}$ and $j=j_0$. Let $t(\theta)=\hat \rho^{d_1}e^{\ii \theta}  $ for  $\theta \in [\frac{2 \pi p}{q}, \frac{2 \pi r}{s}]$, and let $\alpha(\theta)$ be given by \eqref{eq:qs2II} for $\hat \rho$, $\hat \theta= \frac{\theta+2\pi l_0}{d_1}$ and the same choice  $j=j_0$.

	Observe first that $\alpha(\hat \theta)$ is continuous in $\theta$, that when $\theta = \frac{2 \pi p}{q}$ we have $\hat \rho=1, \alpha(\theta)=0,$ and when $\theta = \frac{2 \pi r}{s}$ we have $\hat \rho=1, \alpha(\theta)=1.$  Define  $\tilde{\phi}_{\alpha(\theta)}(t) \equiv  \frac{\phi_{\alpha(\theta)}(t)}{(t-t(\theta))(t-\overline{t(\theta)})(t-1)}.$ 
	Applying Corollary \ref{II_der}, we conclude that   
	there is a positive $\epsilon$ such that 
	such that for all $\theta \in (\frac{2 \pi p}{q}, \frac{2 \pi p}{q}+\epsilon),$ and any root $z$ of $\tilde{\phi}_{\alpha(\theta)}$, we have $|z|<|t(\theta)|.$ 
	
	Let $$\theta_0 \equiv  \sup \{ \theta \in [\sfrac{2 \pi p}{q}, \sfrac{2 \pi r}{s}] | t(\nu) {\mbox{\rm{ a subdominant root of }}} {\phi}_{\alpha(\nu)}  {\mbox{ for }} \nu \in [\sfrac{2 \pi  p}{q}, \theta)\}.$$ If $\theta_0 = \frac{2 \pi r}{s},$ we are done, so suppose that $\theta_0 < \frac{2 \pi r}{s}.$ 
	Since $t(\theta)$ is a subdominant root of ${\phi}_{\alpha(\theta)}$ for $\theta \in [\frac{2 \pi  p}{q}, \theta_0),$ it follows that necessarily $t(\theta_0)$ is a subdominant root of ${\phi}_{\alpha(\theta_0)}.$ Also, there is a sequence $\theta_h \in (\theta_0, \frac{2 \pi r}{s}]$ such that $\theta_h \rightarrow \theta_0$ as $h \rightarrow \infty,$ and in addition, for each $h \in \mathbb{N},$ there is a root $z(\theta_h)$ of $\tilde{\phi}_{\alpha(\theta_h)}$  such that $|z(\theta_h)| > |t(\theta_h)|$. Passing to convergent subsequences if necessary, we find that there is a root $z_0$ of  $\tilde{\phi}_{\alpha(\theta_0)}$  such that $|z_0| \ge |t(\theta_0)|$; as $t(\theta_0)$ is a subdominant root, it must be the case that $|z_0| = |t(\theta_0)|.$

	Then by Theorem \ref{lem:modulus}, we have either  $z_0=t(\theta_0)$ or $z_0=\overline{t(\theta_0)}.$ In either case, $t(\theta_0)$ is a multiple root of $\phi_{\alpha(\theta_0)}.$ This last is  a contradiction to Theorem \ref{thm:double}, since  $t(\theta_0)$ is on the boundary of $\Theta_n.$   It now follows that for all $\theta \in [\frac{2 \pi p}{q},\frac{2 \pi r}{s}], t(\theta)$ is  a subdominant root of $\phi_{\alpha(\theta)}.$ 
\end{proof}

\begin{corollary}\label{cor:subdom} Suppose that $n \in \N $ with $n \ge 2,$ and that $z \in \Theta_n.$ Then $z$ is a subdominant eigenvalue of a stochastic matrix of order $n$. 
	Further, if $z \ne 1,$ then $z$ is a subdominant eigenvalue of an irreducible  stochastic matrix of order $n$, and if $z \ne  e^{\frac{2 \pi \ii p}{q}}$  for any   $\sfrac{p}{q} \in \F_n,$  then $z$ is a subdominant eigenvalue of a  primitive   stochastic matrix of order $n$. 
\end{corollary}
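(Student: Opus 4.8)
The plan is to push everything onto the boundary $\partial\Theta_n$, to treat $\partial\Theta_n$ with Theorem~\ref{typeIIthm} and explicit extremal matrices, and to recover interior points of $\Theta_n$ by a rank--one perturbation. The first ingredient is the elementary observation that if $T$ is an $n\times n$ stochastic matrix having $\lambda$ as a subdominant eigenvalue, if $c\in[0,1)$, and if $u$ is an entrywise positive probability vector, then $T':=cT+(1-c)\mathbf{1}u^{\top}$ is an $n\times n$ \emph{primitive} stochastic matrix whose eigenvalues are $1$ together with $c\mu$ for $\mu$ ranging over the non--Perron eigenvalues of $T$; in particular $c\lambda$ is a subdominant eigenvalue of $T'$. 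The spectral statement follows from the matrix determinant lemma applied to $xI-cT-(1-c)\mathbf{1}u^{\top}$ (using $cT\mathbf{1}=c\mathbf{1}$), and primitivity is immediate since $T'$ is entrywise positive. Now let $z\in\Theta_n$. If $z\notin\partial\Theta_n$ then $|z|<1$ (the unit circle meets $\Theta_n$ only in the points $e^{2\pi\ii p/q}$, $\sfrac{p}{q}\in\F_n$, which all lie on $\partial\Theta_n$), and by star--shapedness of $\Theta_n$ and Lemma~\ref{lem:rho} we may write $z=cw$ with $w\in\partial\Theta_n$ and $c\in[0,1)$ (take $w$ the unique boundary point with $\arg w=\arg z$ and $c=|z|/|w|$; take $c=0$ and $w$ arbitrary if $z=0$). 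Hence, once every $w\in\partial\Theta_n$ is known to be a subdominant eigenvalue of some $n\times n$ stochastic matrix, the observation above produces a primitive $n\times n$ stochastic matrix with $z$ as a subdominant eigenvalue, and such a $z$ is automatically different from $1$ and from all $e^{2\pi\ii p/q}$ --- exactly the Corollary's claim in that case. It therefore remains to realise each $w\in\partial\Theta_n$ as a subdominant eigenvalue of an $n\times n$ stochastic matrix that is irreducible when $w\ne1$ and primitive when $w\ne e^{2\pi\ii p/q}$ for every $\sfrac{p}{q}\in\F_n$.

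For $w=1$ take $I_n$, whose only eigenvalue $1$ is subdominant. For $w=e^{2\pi\ii p/q}$ with $\sfrac{p}{q}\in\F_n$ and $0<\sfrac{p}{q}<1$ (so $2\le q\le n$), take any irreducible $n\times n$ stochastic matrix of period $q$ --- for instance split the states into cyclic classes of sizes $n-q+1,1,\dots,1$ with positive transition probabilities respecting the cyclic order. Its eigenvalues of modulus one are exactly the $q$-th roots of unity, each simple, so $e^{2\pi\ii p/q}$ (a primitive $q$-th root of unity, since $\gcd(p,q)=1$) is among them and, having the modulus of the Perron eigenvalue, is subdominant. If $w$ lies on the open Type 0 arc joining $1$ to $e^{2\pi\ii/n}$, or on its conjugate, then by Theorem~\ref{thm:Karpelevic} $w=(1-\alpha)+\alpha e^{\pm2\pi\ii/n}$ for a unique $\alpha\in(0,1)$; take $T=(1-\alpha)I_n+\alpha P$ with $P$ the $n$-cycle permutation matrix. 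Then $T$ is primitive (its digraph carries loops and an $n$-cycle), its eigenvalues are $(1-\alpha)+\alpha e^{2\pi\ii j/n}$, $j=0,\dots,n-1$, and the two of largest modulus after the Perron eigenvalue $1$ are $(1-\alpha)+\alpha e^{\pm 2\pi\ii/n}$, namely $w$ and $\overline w$; hence $w$ is subdominant.

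The substantive case is $w$ on an open arc of Type I, II or III. By Lemma~\ref{lem:conjugate pairs} take the associated Farey pair $(\sfrac{p}{q},\sfrac{r}{s})$ with $q<s$ and $0<\sfrac{p}{q},\sfrac{r}{s}<1$; such arcs occur only for $n\ge3$. Write $w=\rho e^{\ii\theta}$ with $\theta\in\arg(q,s)$. Theorem~\ref{typeIIthm} tells us that $w$ is a subdominant root of the Ito rational function $\phi_{\alpha(\theta)}$ with $\alpha(\theta)\in(0,1)$; in particular $w$ is a root of $\phi_{\alpha(\theta)}$, and since $\phi_\alpha$ has no zero roots when $\alpha\in(0,1)$ we get $\rho=|w|>0$. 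It then suffices to exhibit an $n\times n$ stochastic matrix $M$, primitive for $\alpha\in(0,1)$, whose characteristic polynomial is $t^{\,n-N_0}f_{\alpha(\theta)}(t)$, where $f_{\alpha(\theta)}$ is the corresponding reduced Ito polynomial of degree $N_0\le n$: then the nonzero eigenvalues of $M$ are exactly the roots of $\phi_{\alpha(\theta)}$ and the other $n-N_0$ eigenvalues are $0$, so --- $\rho$ being positive --- the root $w$, subdominant among the roots of $\phi_{\alpha(\theta)}$, is a subdominant eigenvalue of the primitive matrix $M$. For Type I ($d=1$, degree $s$) such an $M$ comes from a digraph on $\{1,\dots,n\}$ carrying exactly one $s$-cycle and one $q$-cycle that together cover all $n$ vertices (possible precisely because $q+s>n$), the single branch vertex distributing its outgoing probability as $\alpha$ and $\beta$; this digraph is strongly connected with cycle lengths $q$ and $s$ of gcd $1$, hence primitive for $\alpha\in(0,1)$, and the Coates digraph formula yields characteristic polynomial $t^{\,n-s}(t^{s}-\beta t^{s-q}-\alpha)$. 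The matrices for Types II and III (with $d=\lfloor n/q\rfloor$ interlinked $q$-cycles and one $s$-cycle, again fitted onto $n$ vertices using $q+s>n$) are the classical Karpelevi\v c/Ito extremal matrices \cite{Ka,I,JP}; from the underlying digraph one checks that each is strongly connected with coprime cycle lengths and has the stated characteristic polynomial.

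Assembling the cases then yields all three assertions: for $z\notin\partial\Theta_n$ the first paragraph gives a primitive realisation, while for $z\in\partial\Theta_n$ the last two paragraphs give a realisation that is irreducible when $z\ne1$ and primitive when $z$ is not a point $e^{2\pi\ii p/q}$. I expect the one genuinely delicate point to be the verification in the third paragraph that the order--$n$ extremal matrices for Types II and III are \emph{simultaneously} strongly connected, of cycle--length gcd $1$ (hence primitive for $\alpha\in(0,1)$), and of characteristic polynomial exactly $t^{\,n-N_0}f_\alpha(t)$ with no extraneous nonzero eigenvalues, so that the subdominance established in Theorem~\ref{typeIIthm} transfers to the matrix; the other steps are either contained in Theorem~\ref{typeIIthm} or are routine digraph bookkeeping.
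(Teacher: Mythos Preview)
Your overall architecture mirrors the paper's proof exactly: reduce to the boundary, dispose of $w=1$, the roots of unity, and the Type~0 arcs by explicit matrices, invoke Theorem~\ref{typeIIthm} for the remaining arcs, and recover interior points by a Brauer-type rank--one perturbation $cT+(1-c)\mathbf 1 u^{\top}$. All of these steps match the paper in substance.

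The one genuine difference is in how the ``delicate point'' you flag is handled. You try to build an order-$n$ matrix directly, with characteristic polynomial $t^{\,n-N_0}f_\alpha(t)$, by drawing the Karpelevi\v c/Ito digraph on $n$ vertices; this requires verifying primitivity and the exact characteristic polynomial case by case. The paper sidesteps this entirely: it cites \cite{JP} for a primitive stochastic matrix $T$ of order $m=\deg f_\alpha\le n$ with characteristic polynomial $f_\alpha$, and then introduces a clean lifting $\tilde T=XTY$ (with explicit $n\times m$ and $m\times n$ stochastic-type matrices $X,Y$ satisfying $YX=I_m$) that produces an $n\times n$ stochastic matrix whose spectrum is that of $T$ together with $n-m$ extra zeros, and which inherits irreducibility/primitivity from $T$ because $\tilde T^k=XT^kY$. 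This device is exactly what resolves your flagged difficulty without any digraph bookkeeping, and it also gives the roots-of-unity case uniformly (lift the $q\times q$ cyclic permutation). Your direct-construction route is viable, but the paper's lift is shorter and avoids the verification you left open.
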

\begin{proof}	
	First, suppose that we have a stochastic matrix $T$ of order $m<n.$ Let ${\bf{1}}_{k}$ denote the all ones vector of order $k,$ let $I_m$ be the identity matrix of order $m,$ and let $X, Y$ be the $n\times m$ and $m \times n$ matrices given by $$X=\left[\begin{array}{c|c} {\bf{1}}_{n-m+1}& 0\\ \hline 0 & I_{m-1}\end{array}\right],Y=\left[\begin{array}{c|c} \frac{1}{n+1-m}{\bf{1}}_{n-m+1}^{\top}& 0^{\top}\\ \hline 0 & I_{m-1}\end{array}\right].$$   It is straightforward to determine that $YX=I_m$ and that $\tilde{T} = XTY$ is a stochastic matrix of order $n$, with eigenvalues equal to the eigenvalues of $T,$ along with $n-m$ additional zero eigenvalues.  Further, since $\tilde{T}^k = XT^k Y, k \in \N$, we find that $\tilde{T}$ is irreducible if $T$ is irreducible, and that $\tilde{T}$ is primitive if $T$ is primitive. It now follows that if $z$ is a subdominant eigenvalue of the stochastic matrix $T$ of order $m<n,$ then $z$ is also   a subdominant eigenvalue of the stochastic matrix $\tilde{T}$ of order $n$, and further that the irreducibility (respectively, primitivity) of $\tilde{T}$ is inherited from $T$.

		Fix a point $z \in \partial \Theta_n.$ If $z = 1,$ then it is certainly a subdominant eigenvalue of the $n \times n$ identity matrix. Also, if $z =   e^{\frac{2 \pi \ii p}{q}}$  for some  $\sfrac{p}{q} \in \F_n,$ then $z$ is a subdominant eigenvalue of an irreducible cyclic permutation matrix of order $q$, and so from the argument above we find that $z$ is also a subdominant eigenvalue of  an irreducible stochastic matrix of order $n$. 	
		Henceforth we assume that $z \ne 1$ and $ z \ne e^{\frac{2 \pi \ii p}{q}}$  for any  $\sfrac{p}{q} \in \F_n.$ 
		
		If $\arg(z)  \in (0, \sfrac{2 \pi }{n}) $, then by Theorem \ref{thm:Karpelevic}, $z$ has the form $1-\alpha + \alpha \cos(\sfrac{2 \pi}{n})$ for some $\alpha \in (0,1).$ It is now readily verified that $z$ is a subdominant eigenvalue of $(1-\alpha)I + \alpha P,$ where $P$ is an $n \times n$ cyclic permutation matrix. An analogous argument applies if $\arg(z) \in (\sfrac{2 \pi (n-1)}{n}, 2 \pi).$ 
		
		If there is a Farey pair $(\sfrac{p}{q}, \sfrac{r}{s}) $ in $\F_n$ such that $0 < \sfrac{p}{q}, \sfrac{r}{s} < 1,$ and $\arg(z) \in \arg(q,s)$, then it follows from  Theorem \ref{typeIIthm} that $z$ is a subdominant root of  the corresponding reduced Ito polynomial $f_\alpha,$ for some $\alpha \in (0,1).$ From the proof of  Theorem 3.2 of \cite{JP}, there is a primitive stochastic matrix $T$ of order $s$ whose characteristic polynomial is $f_\alpha$; it now follows from the argument above that there is a primitive $n \times n$ stochastic matrix $\tilde{T}$ having $z$ as a subdominant eigenvalue.

	Finally, suppose that $z$ is an interior point of $\Theta_n.$ Then for some $c \in [0,1),$ and some point $z_0$ on the boundary of $\Theta_n,$ we have $z=cz_0.$ Then there is a stochastic matrix $T$ of order $n$ having $z_0$ as a subdominant eigenvalue. For concreteness, denote the eigenvalues of $T$ by  $1, \lambda_2, \ldots, \lambda_n$ (evidently $z_0=\lambda_j $ for some $j=2, \ldots, n$). By a theorem of Brauer \cite{Brauer}, the eigenvalues of the stochastic matrix $c T + (1-c){\bf{1}}_n{\bf{1}}_n^{\top}$ are $1, c \lambda_2, \ldots, c \lambda_n$.  We thus find that  $z=cz_0$ is a subdominant eigenvalue of $c T + (1-c){\bf{1}}_n{\bf{1}}_n^{\top}$, which is $n \times n,$ stochastic, and primitive. 
\end{proof}

\bibliographystyle{elsarticle-num} 
\bibliography{../KarpelevicRef}

\end{document}